\documentclass[11pt, letter, oneside]{amsart}
    \usepackage{algpseudocode, algorithm}
    \usepackage[symbol]{footmisc}
    \usepackage{indentfirst}
    \usepackage{afterpage}
    \usepackage{array, makecell, float, longtable}
    \usepackage{amsmath}
    \usepackage{amssymb}
    \usepackage{amsthm}
    \usepackage{subcaption}
    \usepackage[utf8]{inputenc}
    \usepackage[margin=1in]{geometry}
    \usepackage{diagbox}
	\usepackage{enumerate}
    \usepackage{adjustbox}
    \usepackage{siunitx}
    \usepackage{graphicx}
    \usepackage{bbm}
    \usepackage{multirow}
    \usepackage{xcolor}
    \usepackage{tikz-cd}
    \usetikzlibrary{shapes.geometric}
    \usetikzlibrary{arrows.meta}
    \usepackage{dynkin-diagrams}
    \usepackage{mathrsfs}
    \usepackage{bm}
    \usepackage{epic}
    \usepackage[colorlinks = true,
            linkcolor = black,
            urlcolor  = black,
            citecolor = blue,
            hypertexnames = false]{hyperref}
    \usepackage[backend=bibtex, firstinits=true, sortcites=true, style=alphabetic]{biblatex}
    \usepackage[capitalize, noabbrev, nameinlink]{cleveref}

	\newtheorem{theorem}{Theorem}[section]
    \newtheorem{theoremdefinition}[theorem]{Theorem-Definition}
    \newtheorem{lemma}[theorem]{Lemma}
    \newtheorem{assumption}[theorem]{Assumption}
    \newtheorem*{assumption*}{Assumption}

	\newtheorem{corollary}[theorem]{Corollary}
	\newtheorem{proposition}[theorem]{Proposition}
	\theoremstyle{definition}
	\newtheorem{remark}[theorem]{Remark}

    \AddToHook{env/proposition/begin}{\crefalias{theorem}{proposition}}
    \AddToHook{env/lemma/begin}{\crefalias{theorem}{lemma}}
    \AddToHook{env/corollary/begin}{\crefalias{theorem}{corollary}}
    \AddToHook{env/conjecture/begin}{\crefalias{theorem}{conjecture}}
    \AddToHook{env/question/begin}{\crefalias{theorem}{question}}
    \AddToHook{env/remark/begin}{\crefalias{theorem}{remark}}
    \AddToHook{env/example/begin}{\crefalias{theorem}{example}}
    \AddToHook{env/fact/begin}{\crefalias{theorem}{fact}}
    \AddToHook{env/definition/begin}{\crefalias{theorem}{definition}}
    \AddToHook{env/assumption/begin}{\crefalias{theorem}{assumption}}

    \newcommand{\s}[0]{\sigma}
    \newcommand{\Z}[0]{\mathbb{Z}}
    \newcommand{\R}[0]{\mathbb{R}}

    \newcommand{\Q}[0]{\mathbb{Q}}

    \newcommand{\comment}[1]{}

    \newcommand{\tr}{\operatorname{tr}}

    \renewcommand{\H}{\mathbb H}
    \newcommand{\cd}{\operatorname{cd}}
    \newcommand{\vol}{\operatorname{vol}}

    \author{Lizi Guo}
    \address{Mathematics Department, University of Wisconsin–Madison, 480 Lincoln Dr, Madison, WI 53706, USA}
    \email{llguo@wisc.edu}

  \author{Jiming Ma}
    \address{School of Mathematical Sciences, Fudan University, Shanghai, 200433, P. R. China}
    \email{majiming@fudan.edu.cn}

    \author{Yourong Zang}
    \address{Mathematics Department, University of Wisconsin–Madison, 480 Lincoln Dr, Madison, WI 53706, USA}
    \email{yzang27@wisc.edu}

    \author{Fangting Zheng}
    \address{Department of Mathematical Sciences, Xi'an Jiaotong-Liverpool University, Suzhou 215123,	China}
    \email{fangting.zheng@xjtlu.edu.cn}

    \keywords{Coxeter polytopes, hyperbolic orbifolds, commensurability, exponential growth}

    \subjclass[2010]{52B11, 51F15, 51M10}

     \date{July 16, 2026}

    \thanks{Jiming Ma was supported by  NSFC No. 12171092. Fangting Zheng was supported by NSFC No. 12471067.}

    \title[Families of incommensurable polytopes]{Several families of incommensurable noncompact hyperbolic Coxeter polytopes}
    
\begin{document}
    \begin{abstract}
        We classify all 141 finite-volume hyperbolic Coxeter five-dimensional polytopes with eight facets, of which 125 are noncompact. Using maximal-cusp density and a noncompact analog of Bogachev–Douba–Raimbault's argument, we construct infinitely many pairwise incommensurable noncompact Coxeter polytopes in dimensions 4, 5, 6, 7, and 9, with the number of commensurability classes growing at least exponentially in volume.
    \end{abstract}
    \maketitle
    \section*{Introduction}
    The classification of hyperbolic Coxeter polytopes in dimension $\geq 3$, namely a convex polytope in $\H^n$ for $n\geq 3$ bounded by hyperplanes meeting at dihedral angles of $\frac{\pi}{k}$ for integers $k\geq 2$, remains undone. The first question of existence is already far from complete. Vinberg showed in 1985 \cite{vinberg-1985} that no compact hyperbolic Coxeter polytopes exist in dimension $\geq 30$. A year later, Prokhorov \cite{prokhorov-1986} proved the nonexistence of noncompact Coxeter polytopes of finite volume in dimension $\geq 996$. Instances of the former were exhibited only up to dimension 8 \cite{bugaenko-compact-1,bugaenko-compact-2}, and only up to dimension 21 for the latter \cite{vinberg-noncompact,vk-noncompact,borcherds-noncompact}. Allcock in \cite{allcock-infinite} constructed infinitely many compact and noncompact Coxeter polytopes of finite volume in $\mathbb{H}^n$ for $n\leq 6$ and $n\leq 19$ respectively. Even in small dimensions, no complete classification is known. To push the current status a bit further, the first part of this paper is dedicated to the complete classification of all five-dimensional Coxeter polytopes of finite volume with eight facets. 
    \begin{theorem}\label{thm:58list}
        	There are exactly 141 finite-volume hyperbolic Coxeter 5-polytopes with 8 facets, out of which 16 are compact, and 125 are noncompact.
    \end{theorem}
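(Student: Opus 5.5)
The classification will be computer-assisted and follow the standard Vinberg–Felikson–Tumarkin strategy for polytopes with $n+3$ facets, extended to allow ideal vertices. A finite-volume Coxeter $5$-polytope $P$ with $8$ facets is determined up to isometry by its Coxeter diagram $\Sigma$, which has $8$ nodes. Its Gram matrix $G(P)$ must have signature $(5,1)$, so it has rank $6$. Following Vinberg, the face lattice of $P$ is read off from $\Sigma$: elliptic subdiagrams of rank $k$ correspond to faces of codimension $k$, and parabolic subdiagrams of rank $4$ correspond to ideal vertices. Since $P$ has $n+3=8$ facets, its combinatorial type is a simple (or, at ideal vertices, simple-edge) polytope whose Gale diagram is an $8$-point configuration in $\R^2$. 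Up to combinatorial equivalence these are finite in number and can be enumerated.

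\textbf{Step 1.} Enumerate all combinatorial types of $5$-polytopes with $8$ facets that can occur. The underlying simple polytopes are listed by their standard Gale diagrams, that is, configurations of points on a regular $(2k+1)$-gon with multiplicities summing to $8$. For the noncompact case, we also allow polytopes that are simple except at finitely many vertices, where the vertex link is a product of simplices forming a Euclidean Coxeter $4$-orbifold. In Gale-diagram language these are the degenerations in which a facet-complement becomes a parabolic cover rather than a missing face.

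\textbf{Step 2.} For each combinatorial type, build candidate diagrams. Each vertex gives a rank-$5$ elliptic subdiagram; each ideal vertex gives a rank-$4$ parabolic subdiagram. Each missing face (a minimal set of facets with empty intersection) gives a subdiagram that is neither elliptic nor parabolic, so some of its edges are bold or dotted.

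Following Felikson–Tumarkin, we start from the "frame" of a maximal set of facets spanning a face. We then glue on the remaining facets, using the finite lists of elliptic and parabolic diagrams of rank $\le 5$ to bound edge labels; the short list of possible local configurations forces finiteness. Dotted edges carry an unknown weight $\cosh d$, and we determine these weights by imposing $\det$ of every $7\times 7$ principal minor of $G$ being $0$ together with the signature condition.

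\textbf{Step 3.} Run a computer search that produces all diagrams with the required subdiagram structure. For each one we:
\begin{enumerate}[(i)]
\item solve for the dotted weights and check that $G$ has signature $(5,1)$;
\item check Vinberg's finite-volume criterion, namely that every elliptic subdiagram of rank $4$ extends in exactly two ways to elliptic rank $5$ or parabolic rank $4$ subdiagrams;
\item remove duplicates up to diagram isomorphism.
\end{enumerate}
Compactness is then read off from whether any rank-$4$ parabolic subdiagram appears. This yields the $16$ compact and $125$ noncompact polytopes. The compact count should agree with Felikson–Tumarkin's earlier list, which serves as a consistency check.

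The main obstacle is the noncompact part of Step 1 and Step 2. Ideal vertices destroy simplicity, so the Gale-diagram enumeration of combinatorial types no longer applies directly. The cleanest fix I would try first is to bypass combinatorics: enumerate $8$-node diagrams directly, growing them from a fixed rank-$5$ elliptic or rank-$4$ parabolic subdiagram (at least one exists). Lannér/quasi-Lannér bounds then limit how each additional node attaches, and we prune any diagram whose $6$-node subdiagrams already violate signature constraints. Checking that this pruning is exhaustive, and handling the continuous dotted-edge parameters correctly, is where I expect most of the effort.
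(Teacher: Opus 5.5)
\textbf{Where the proposal fails.} Your skeleton matches the paper's: assign elliptic and parabolic diagrams vertex by vertex, solve the vanishing of all $7\times 7$ principal minors, check the signature, and remove duplicates. However, two of the rules you use to build candidate diagrams are false, and either one makes the search non-exhaustive.

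\emph{The missing-face rule in Step~2.} Let $J$ be a minimal set of facets with empty intersection in $\mathbb H^5$, with $|J|\ge 3$. Every pair in $J$ meets, so $\Sigma_J$ has \emph{only} solid edges. It is connected, non-elliptic, and all its proper subdiagrams are elliptic. Hence it is a Lann\'er diagram or a connected parabolic one, for example the $\widetilde A_3$ factor at a cusp with link $\Delta^1\times\Delta^3$. Your claim ``neither elliptic nor parabolic, so some edge is bold or dotted'' holds only for $|J|=2$. Used as a construction rule, it would discard every polytope on the doubly truncated simplex $P_{39}$ and on $P_{37}$, including all $16$ compact examples. In the labelling of the table of admissible types, $P_{39}$ has minimal non-faces $\{2,3,4,5,6\}$ and $\{1,2,3,4,5\}$, which are Lann\'er of order $5$. $P_{37}$ has the minimal non-face $\{1,6,7\}$, a Lann\'er triangle. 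The paper instead prunes with Lann\'er, quasi-Lann\'er and parabolic obstructions.

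\emph{Bounding the labels.} Local configurations do not bound the edge labels. Rank-$5$ elliptic diagrams contain $I_2(m)$ components for every $m$, and order-$3$ Lann\'er diagrams have unbounded labels. So the label on a codimension-two face containing no ideal vertex need not be bounded by any local data. The paper encodes all weights $>7$ by a placeholder and carries them as unknown angle variables $a_{ij}$, alongside the dotted weights, in the system $\det G_{\widehat i}=0$. Only afterwards does it impose $a_{ij}=-\cos(\pi/m_{ij})$. In your system only the dotted weights are unknown, so large labels are either missed or the search is not finite.

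\textbf{The obstacle you single out is not one.} Gale diagrams describe all $d$-polytopes with $d+3$ vertices, not only simplicial ones. The complete list of the $116$ combinatorial types of $5$-polytopes with $8$ facets is available (Fukuda--Miyata--Moriyama). Your Step~1 recipe via $(2k+1)$-gons produces only the simple types, and the unspecified ``degenerations'' do not supply the others. The paper keeps exactly the $39$ types whose vertex links are all $\Delta^4$, $\Delta^1\times\Delta^3$, $\Delta^2\times\Delta^2$ or $\Delta^2\times\Delta^1\times\Delta^1$, the combinatorial types of rank-$4$ parabolic diagrams. This prescribes the face poset in advance and makes the assembly of diagrams a finite problem. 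Your fallback of growing $8$-node diagrams directly comes with no finiteness or exhaustiveness argument, and it would still need both corrections above.
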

    Allcock's original constructive argument ceases to work when it is passed to commensurability classes of Coxeter polytopes. Two $n$-dimensional Coxeter polytopes $P_1, P_2$ are said to be \textit{commensurable} whenever they admit a finite-sheeted common cover. In the context of compact hyperbolic manifolds, Raimbault in \cite{raimbault-manifold-comm} produced a lower bound for the number of commensurability classes of bounded volumes that grows exponentially. This growth rate was later improved and generalized by Gelander and Levit to include noncompact hyperbolic manifolds \cite{gl-manifold-comm}, where they constructed hyperbolic manifolds from commensurability classes of decorated graphs, thus proving a superexponential growth. However, it was not until 2024 \cite{bdr-incomm-4-5} that the results above were explicitly passed to Coxeter polytopes in dimensions higher than three, where one cannot freely associate a polytope to
    a desired quadratic form. Taking the advantage of Makarov's garland construction, Bogachev, Douba, and Raimbault proved the belief \cite{ft-claim} that the garlands give rise to infinitely many incommensurable compact Coxeter polytopes in dimensions 4 and 5.

    In the second part of the paper, we take advantage of the full list of four-dimensional polytopes with seven facets from \cite{mz-4d-7f} and our new list of five-dimensional polytopes with eight facets. We believe that further complete classifications of the subclasses of hyperbolic Coxeter polytopes will be useful for constructing infinitely many commensurability classes in even higher dimensions.
    Families of noncompact Coxeter polytopes of finite volumes containing infinitely many incommensurable classes are constructed via two methods. Our first method builds polytopes from nonarithmetic one-cusped blocks in dimensions four and five, utilizing the nonarithmetic invariants. The second method relies on our generalizations of the results of Bogachev-Douba-Raimbault, building families of polytopes from (quasi-)arithmetic blocks with a count that grows either linearly or exponentially with volume. Along with the additional help of \cite{tumarkin-nplustwo} and \cite{roberts2016}, we will prove the following commensurability results.
    \begin{theorem}\label{thm:main}   
        There are infinitely many incommensurable noncompact Coxeter polytopes of finite volumes in $\H^n$ for $n=4, 5, 6, 7, 9$. Moreover, there exists a constant $c(n)>1$ such that for all $V\gg 1$, the number of incommensurable classes of noncompact $n$-dimensional polytopes with volume less than $V$ is at least $c(n)^V$.
    \end{theorem}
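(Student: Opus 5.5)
The plan is to follow the Bogachev--Douba--Raimbault scheme, replacing the compact garlands used there by noncompact building blocks. First, in each dimension $n\in\{4,5,6,7,9\}$ we extract from the available lists (\cite{mz-4d-7f} for $n=4$, \cref{thm:58list} for $n=5$, and \cite{tumarkin-nplustwo}, \cite{roberts2016} for $n=6,7,9$) a finite-volume Coxeter polytope $B$ of a specific shape. We want $B$ to have two facets $F_1,F_2$ such that:
\begin{itemize}
\item $F_1$ and $F_2$ are isometric, with matching dihedral angles to their neighbouring facets;
\item every dihedral angle along $F_1$ and $F_2$ is $\pi/2$ or $\pi/(2k)$.
\end{itemize}
With these conditions, reflecting $B$ across $F_2$ and gluing copies end to end produces a chain $B_m$ of $m$ blocks, which is again a Coxeter polytope. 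Its volume is $m\vol(B)$, it is noncompact because $B$ has an ideal vertex, and its reflection group contains $\Gamma_B$ with finite index in the appropriate sense. We build two such chains from two blocks $B$ and $B'$ sharing the same gluing facet. For a word $w\in\{B,B'\}^m$ we then obtain a polytope $P_w$ of volume roughly $m\cdot\max\vol$. The number of words is $2^m$, which gives the required $c(n)^V$ once we show that words lying in distinct orbits under the small symmetry group (reversal) are incommensurable.

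Second, to separate commensurability classes, the argument splits into two cases.

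In the \emph{nonarithmetic} case (blocks in dimensions $4$ and $5$), we choose $B$ one-cusped and nonarithmetic. By Margulis, each $\Gamma_{P_w}$ then has a unique maximal element in its commensurability class, namely its commensurator $C_w$. Any two commensurable $P_w,P_{w'}$ cover the same orbifold $\H^n/C$. We compare a commensurability invariant: the \emph{maximal-cusp density}, that is, the ratio of the maximal cusp volume to the total volume, computed on the commensurator quotient. Alternatively we use the ratio $\vol/\#\text{cusps}$ of the quotient, which is multiplicative-compatible under covers. The cusp structure of $P_w$ is read off from the word: the cusps correspond to ideal vertices, and each block contributes a known cusp cross-section. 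Hence the density is an explicit rational function of the counts of $B$ versus $B'$. This already gives infinitely many classes, though only linearly many in volume.

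For exponential growth we need the genuinely BDR-type argument. Suppose $P_w$ and $P_{w'}$ are commensurable. Then both reflection groups sit in a common discrete group $C$. We show that the gluing walls of $P_w$ are forced to be mirrors of $C$, using that the mirror hyperplanes of $B$ not meeting the cusp are rigid. Thus $C$ is itself a reflection-type group whose fundamental domain is a union of blocks, and the word $w$ is recovered up to symmetry from $C$ by a local-to-global argument on the tessellation of $\H^n$ by blocks.

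In the \emph{(quasi-)arithmetic} case (dimensions $6,7,9$, and possibly again $4,5$), we take $B$ arithmetic and $B'$ either arithmetic over a different adjoint trace field or quasi-arithmetic. Then every mixed word with both letters present gives a nonarithmetic $P_w$ of Gromov--Piatetski-Shapiro type. The argument above applies once we know the commensurator is the discrete group generated by reflections in the tessellation.

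The main obstacle is this rigidity step: proving that a commensurability between $P_w$ and $P_{w'}$ must preserve the block decomposition. I would first try the BDR approach. In that approach, the gluing hyperplanes are characterized intrinsically as the totally geodesic hypersurfaces of $\H^n/C$ separating the arithmetic pieces, since the pieces carry distinct invariant trace fields or quadratic forms. In the noncompact setting, the extra input is cusp control: the cusps lie strictly inside blocks, and maximal cusps are preserved by commensurators. This lets us rule out the gluing hyperplanes passing through cusps. Choosing blocks from the classification lists that actually satisfy these facet-matching and angle conditions in each dimension is the other laborious, case-by-case part.
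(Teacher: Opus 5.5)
Your outline has the right overall shape: a cusp-density argument for infinitely many classes, and a garland argument for exponential growth. But neither separating step, as you set it up, actually proves incommensurability.

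\textbf{Cusp density.} Cusp density is a usable invariant only for one-cusped orbifolds, where the maximal cusp is unique and its density is preserved under finite covers.
\begin{itemize}
\item You cannot let the cusped block $B$ occur several times in a word.
\item $\vol/\#\text{cusps}$ is not a commensurability invariant at all, since the number of cusps is not multiplicative under covers.
\item The paper instead caps a chain of \emph{compact} blocks with a single one-cusped nonarithmetic block.
\end{itemize}
More importantly, your claim that the density is ``an explicit rational function of the counts'' silently assumes that gluing does not change the maximal cusp volume. That is exactly \cref{lem:decreasing-density}. It holds only if the closure of the maximal cusp neighbourhood misses the gluing facet. Otherwise the wall the horoball is tangent to stops being a mirror after gluing, and the cusp can grow or spill into the attached block. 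The paper checks this explicitly with the radius formula $r(S_j)=1/|\langle u,e_j\rangle|$ of \cref{lem:cusp-radius}: the first tangency is with $F_5$ (resp.\ $F_1$), not with the gluing facet. This check is the heart of the argument, and your proposal omits it.

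\textbf{Exponential growth.} Your rigidity step (gluing walls forced to be mirrors of the commensurator, whose fundamental domain is then a union of blocks) has no mechanism behind it. You offer no tool to prove it for nonarithmetic blocks. Your remark that the pieces ``carry distinct quadratic forms'' points the right way, but the key lemma is missing. It is the orbifold Gelander--Levit lemma (\cref{lem:gl-comm}):
\begin{itemize}
\item Choose quasi-arithmetic blocks with the \emph{same} trace field $k$ but non-$k$-isogenous ambient groups. These are detected by non-similar Vinberg forms (determinant square class, Hasse invariant). Merely taking ``$B'$ quasi-arithmetic'' is not enough.
\item If, in a common finite cover, lifts of differently labelled pieces overlapped in an open set, the monodromy of the overlap would be Zariski-dense. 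It would lie in conjugates of both lattices, so \cref{cor:same-ambient} would force the ambient groups to coincide.
\item Hence maximal constant-label blocks correspond. Their lengths match by comparing boundary volumes and total volumes, and Raimbault's combinatorics gives shift/reversal (\cref{prop:gluing-comm}).
\end{itemize}

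\textbf{Setup conditions.} This argument needs the gluing facets to be admissible, with all angles $\pi/2$. With angles $\pi/(2k)$, $k\ge 2$, translates of the gluing hyperplane cross each other, and the pieces are no longer suborbifolds with totally geodesic boundary.

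The noncompact adaptation is also not ``keeping gluing hyperplanes away from cusps.'' The paper's gluing facets are themselves noncompact, e.g.\ $[6,3,3]$ in dimension four. What is actually needed is $n\ge 4$ in the Gelander--Levit lemma for finite-volume boundary. One also needs the observation that the surjection $\varphi$ in BDR's proof stays well defined when the two gluing facets meet at a cusp.
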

    Note that the polytopes to be constructed are all nonarithmetic. Indeed, it is still an open question that whether there are infinitely many commensurability classes of quasi-arithmetic Coxeter polytopes, regardless of their compactness.

    We start with preliminaries and the algorithm classifying all five-dimensional polytopes with eight facets in \cref{sec:58}. The new list of complete classification sits in \cref{sec:appendix}. Commensurability invariants and gluing arguments will then be either repeated or generalized in \cref{sec:comm-inv}. In \cref{sec:dimension-four} and \cref{sec:five-dimensional-family}, we will demonstrate our first method of gluing nonarithmetic blocks, partly proving \cref{thm:main}. At last, in \cref{sec:bdr-noncompact}, noncompact analogs of Bogachev-Doub-Raimbault's arguments will be applied to pairs of polytopes discovered in existing lists of polytopes or our new list, completing the proof of the main theorem. Throughout the paper, we will freely use the terms Coxeter polytopes, their reflection groups, and their corresponding orbifolds with no distinction.

    \vspace*{0.3cm}
    
    \textbf{Acknowledgments.} We thank Jean Raimbault for helpful correspondence and valuable comments concerning the proof of \cref{prop:bdr-noncompact}.

    \section{Preliminary} \label{section:cchp}
    In this section, we recall some essential facts about Coxeter hyperbolic polytopes, including Gram matrices, Coxeter diagrams, characterization theorems, etc. Readers can refer to, for example, \cite{Vinberg:1993} for more details.

    For a hyperbolic acute-angled $n$-polytope $P=\cap_{k\in \mathcal{I}} H_{k}^-$, where $H_i^{-}$is the negative half-space bounded by the hyperplane $H_i$ in $\mathbb{H}^n$ with normal vectors pointing outwards, we consider its \textit{Gram matrix} $G(P)$, where $G(P)_{ij}=\langle e_i,e_j\rangle$. Here $e_i$ are vectors from the set $\{e_k\in \mathbb{E}^{n,1}: i\in\mathcal{I}\}$ that determines the hyperplanes $H_k^-$. Many of the combinatorial, geometric, and arithmetic properties of $P$ can be inferred from $G(P)$. For instance, the entry $\langle e_i,e_j\rangle$ characterizes the configuration of bounding hyperplanes as follows:
    \[\langle e_i,e_j\rangle=\begin{cases}
        1, &\text{if }j=i,\\
        -\cos\alpha_{ij}, &\text{if }H_i, H_j\text{ intersect at a diahedral angle }\alpha_{ij},\\
        -\cosh\rho_{ij}, &\text{if }H_i, H_j\text{ are ultraparallel with a distance }\rho_{ij},\\
        -1, &\text{if }H_i, H_j\text{ are parallel.}
    \end{cases}\]
    
    For a Coxeter polytope, it is convenient to represent it by a decorated graph, which is known as the \textit{Coxeter graph}, denoted by $\Sigma=\Sigma(P)$. Every node $i$ in $\Sigma$ corresponds a bounding hyperplane $H_i$ of $P$. Two nodes $i_1$ and $i_2$ corresponding to hyperplanes $H_i$ and $H_j$ are joined (1) by a solid edge with a weight $2\leq k_{ij} < \infty$ if the hyperplanes intersect in $\mathbb{H}^n$ at angle $\frac{\pi}{k_{ij}}$; (2) by a bold edge if the planes are parallel; (3) by a dotted edge, sometimes labeled by $\cosh \rho_{ij}$, if they are ultraparallel with distance $\rho_{ij}$. We omit all edges of weight two. Edges with weights $3$, $4$, $5$, and $6$ are drawn as simple, double, triple, and quadruple edges, respectively. Moreover, the \textit{order} of diagram $\Sigma$, denoted as $|\Sigma|$, refers to the total number of nodes it contains. The \textit{signature} and \textit{rank} of diagram $\Sigma$ correspond to the signature and rank, respectively, of the matrix $G(\Sigma)$.

    A \textit{permutation} of a square matrix refers to a rearrangement of its rows combined with the same rearrangement of its columns. A square matrix $M$ that cannot be represented as a direct sum of two matrices up to permutations is said to be \textit{indecomposable}. Every matrix can be represented uniquely as a direct sum of indecomposable matrices, which are called (indecomposable) components. We say a polytope is \textit{indecomposable} if its Gram matrix $G(P)$ is indecomposable. Note that a non-degenerate hyperbolic polytope $P$ is decomposable if it has a proper face $F$ that is orthogonal to every hyperplane which does not contain it. The orthogonal projection onto the face $F$ establishes a fibration of $P$ into polyhedral cones, ensuring that all vertices of $P$ reside within $F$. Consequently, every convex hyperbolic polytope of finite volume is indecomposable.
    
    \begin{figure}[h]
    	\scalebox{0.26}[0.26]{\includegraphics {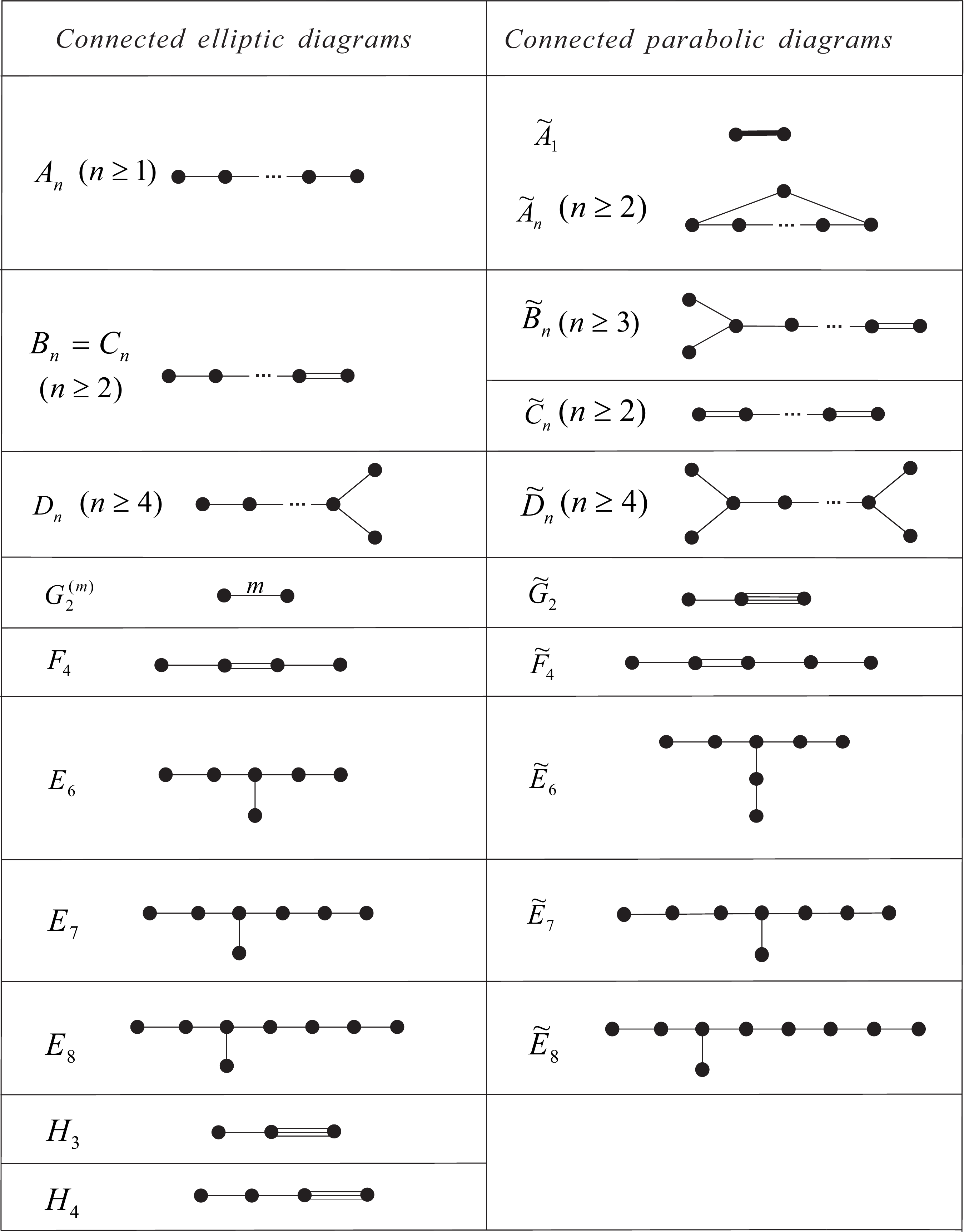}}
    	\caption{Connected elliptic (left) and connected parabolic (right) Coxeter diagrams.}
    	\label{figure:coxeter}
    \end{figure}

    In 1907, Perron found a remarkable property of eigenvalues and eigenvectors of matrices with positive entries. Frobenius later generalized it by investigating the spectral properties of indecomposable non-negative matrices.
    
    \begin{theorem}[Perron-Frobenius, \cite{G:1959}]\label{thm:PF}
    	An indecomposable real matrix $A=(a_{ij})$ with non-negative entries always possesses a maximal eigenvalue $r$, which is positive, simple, and the corresponding eigenvector has all positive coordinates.
    \end{theorem}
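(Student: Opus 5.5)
The plan is to prove the Perron–Frobenius statement of \cref{thm:PF} by the classical route: reduce to a strictly positive matrix, then use a variational characterization of the largest eigenvalue. Let $A$ be $n\times n$, nonnegative and indecomposable. First I will show that $B=(I+A)^{n-1}$ has all entries positive. View $A$ as the adjacency matrix of a directed graph on $\{1,\dots,n\}$ with an edge $i\to j$ whenever $a_{ij}>0$. In the sense used in the paper (permutations acting simultaneously on rows and columns), indecomposability means this graph cannot be split into two blocks with no edges between them. For symmetric matrices such as Gram matrices, this is the same as the graph being connected. Then any two vertices are joined by a path of length at most $n-1$, and expanding $(I+A)^{n-1}$ gives $B_{ij}>0$. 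For a general nonsymmetric matrix, "indecomposable" should be read as irreducible (strong connectivity), and the same path argument applies.

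Next I will define the Collatz–Wielandt function
\[ r(x)=\min_{i:\,x_i>0}\frac{(Ax)_i}{x_i} \]
on the simplex $S=\{x\ge 0,\ \sum x_i=1\}$, and set $r=\sup_S r(x)$.
\begin{itemize}
\item Since $r(x)$ is only upper semicontinuous on $S$, I will maximize instead over the compact set $B(S)$, which consists of positive vectors. On $B(S)$ the function is continuous, and $r(Bx)\ge r(x)$ because $B$ commutes with $A$ and preserves the inequality $Ax\ge r(x)x$.
\item So the supremum is attained at some $z\geq 0$ with $Az\ge rz$. If the inequality were strict in some coordinate, applying $B$ would make it strict in every coordinate, which would increase $r$ and contradict maximality. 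Hence $Az=rz$.
\item Then $z=(1+r)^{-(n-1)}Bz>0$, so the eigenvector is positive. Also $r>0$, since $r\geq r(\mathbf 1/n)$ and every row of $A$ is nonzero (a zero row would make $A$ decomposable when $n>1$). The degenerate $1\times1$ zero matrix is the only exception and is excluded.
\end{itemize}

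Maximality and simplicity take two steps.
\begin{itemize}
\item For any eigenpair $Ay=\lambda y$, taking absolute values gives $A|y|\ge|\lambda||y|$. Hence $|\lambda|\le r(|y|/\sum|y_i|)\le r$, so $r$ is the maximal eigenvalue in modulus.
\item For geometric simplicity, suppose $w$ is a real eigenvector for $r$ that is not a multiple of $z$. Choose $t$ so that $z-tw\ge0$ has a zero coordinate and is nonzero. It is still an $r$-eigenvector, and the positivity argument above forces it to be strictly positive, a contradiction. If $w$ is complex, apply this to its real and imaginary parts.
\end{itemize}

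The step I expect to be the main obstacle is algebraic simplicity: ruling out a Jordan block for $r$. I will use the same argument for $A^{T}$, which is also nonnegative and indecomposable, to get a positive left eigenvector $u^{T}A=ru^{T}$. If there were a generalized eigenvector $v$ with $(A-r)v=z$, then
\[ u^{T}z=u^{T}(A-r)v=0, \]
which is impossible because $u>0$ and $z>0$. This yields algebraic multiplicity one and completes the plan.
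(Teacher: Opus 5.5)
Your argument is correct and is essentially the proof behind the paper's citation. The paper gives no proof of its own and simply quotes Gantmacher, who presents the same Wielandt route you take: positivity of $(I+A)^{n-1}$, then maximizing the Collatz--Wielandt function over $(I+A)^{n-1}S$. The one difference is the last step: Gantmacher obtains algebraic simplicity from the positivity of $\operatorname{adj}(rI-A)$, via $\chi_A'(r)=\tr\operatorname{adj}(rI-A)>0$, whereas you pair with a positive left eigenvector of $A^{T}$, which is an equally standard finish.

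Your caveat that ``indecomposable'' must be read as irreducible for nonsymmetric matrices is also correct and necessary. Under the paper's direct-sum definition, $\left(\begin{smallmatrix}0&1\\0&0\end{smallmatrix}\right)$ is a counterexample. This causes no trouble here, because the paper only applies the theorem to the symmetric matrix $I-G(P)$.
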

    
    The Gram matrices $G(P)$ of an indecomposable Coxeter polytope are symmetric matrices with non-positive entries off the diagonal, and all diagonal elements are equal to $1$. Applying \cref{thm:PF} to $I-G(P)$, we can identify a unique smallest eigenvalue of $G(P)$, denoted by $\lambda$. In the case of $G(P)$ being semi-definite, the deficiency of an indecomposable semi-positive definite matrix $G(P)$ does not exceed $1$, and any proper submatrix of $G(P)$ is positive definite. For a Coxeter polytope $P$, its Coxeter diagram $\Sigma(P)$ is said to be \textit{elliptic} if $G(P)$ is positive definite; $\Sigma (P)$ is called \textit{parabolic} if every indecomposable component of $G(P)$ is degenerate and all proper subdiagrams of each indecomposable component are elliptic. The elliptic and connected parabolic diagrams are proved to be the Coxeter diagrams of spherical and Euclidean Coxeter simplices, respectively. They are classified by Coxeter in \cite{Coxeter:1934} as shown in \cref{figure:coxeter}.
    
    A connected diagram $\Sigma$ is referred to as a \textit{Lann\'{e}r diagram} if it is neither elliptic nor parabolic, and every proper subdiagram of $\Sigma$ is elliptic. These diagrams represent compact hyperbolic Coxeter simplices. The complete list of such diagrams was first reported by Lann\'{e}r \cite{Lanner:1950}, and the lists for two- and three-dimensional cases are provided in \cref{figure:lanner}. Similarly, noncompact hyperbolic simplices of finite volume can also be enumerated. They are known as quasi-Lann\'{e}r diagrams, and characterized by connected non-elliptic and non-parabolic Coxeter graphs where proper subgraphs are either elliptic or connected parabolic graphs.  The two- and three-dimensional lists of these simplices are presented in \cref{figure:lanner}.
    
    \begin{figure}[h]
    	\scalebox{0.31}[0.31]{\includegraphics {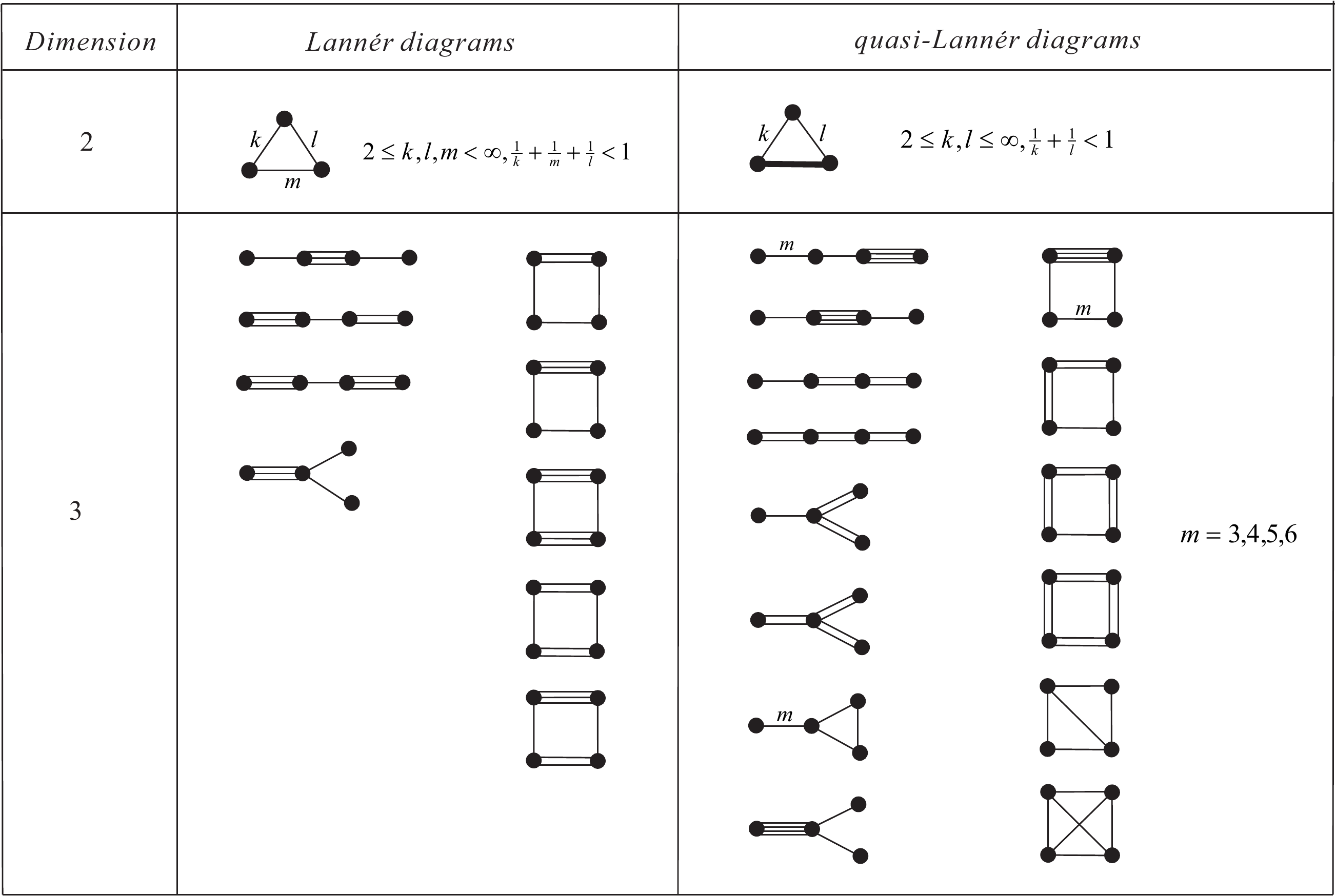}}
    	\caption{The Lann\'{e}r and quasi-Lann\'{e}r diagrams correspond to two- or three-dimensional hyperbolic Coxeter simplices, respectively.}
    	\label{figure:lanner}
    \end{figure}

    Although the full list of hyperbolic Coxeter polytopes remains incomplete, basic properties of Gram matrices and polytopes were already discovered by Vinberg in \cite{vinberg-1985}:
    \begin{theorem}[Theorem 2.1 in \cite{vinberg-1985}] \label{thm:signature}
        Let $G=(g_{ij})$ be an indecomposable symmetric matrix of signature $(n,1)$, where $g_{ii}=1$ and $g_{ij}\leq 0$ if $i\ne j$. Then there exists a unique, up to isometry, convex hyperbolic polytope $P\subseteq\H^n$, whose Gram matrix coincides with $G$.
    \end{theorem}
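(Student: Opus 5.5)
The plan is to realize $P$ explicitly from $G$ in Minkowski space, using only linear algebra and Sylvester's law of inertia. Let $m$ be the size of $G$. Since $G$ has signature $(n,1)$, it has rank $n+1$. Hence there are vectors $e_1,\dots,e_m\in\mathbb{E}^{n,1}$ with $\langle e_i,e_j\rangle=g_{ij}$ that span $\mathbb{E}^{n,1}$.

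To build them, diagonalize $G=Q^{T}DQ$ with $D=\operatorname{diag}(1,\dots,1,-1,0,\dots,0)$. Take the $e_i$ to be the first $n+1$ coordinates of the columns of $Q$. Then $\langle e_i,e_j\rangle=g_{ij}$, and the $e_i$ span the whole space. Any other realization of $G$ by spanning vectors differs from this one by a linear isometry of $\mathbb{E}^{n,1}$. This follows from Witt's theorem, or directly: the map $\sum c_ie_i\mapsto\sum c_ie_i'$ is well defined because the kernel of $G$ is the space of linear relations in both realizations. After possibly composing with $-\mathrm{id}$ on the time direction, the isometry preserves the upper sheet, so it induces an isometry of $\H^n$.

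Now set $P=\{x\in\H^n:\langle x,e_i\rangle\le 0\ \forall i\}$, viewed as the projectivized cone $K=\{x:\langle x,e_i\rangle\le0\}$ intersected with the hyperboloid. Three things must be checked:
\begin{enumerate}[(i)]
\item $K$ meets the interior of the light cone, so $P$ is nonempty with nonempty interior.
\item Each $H_i$ actually supports a facet, so the Gram matrix of $P$ is exactly $G$ and not that of a subset of the vectors.
\item Uniqueness of $P$ follows from the uniqueness of the realization above.
\end{enumerate}

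For (i) I would use indecomposability together with Perron--Frobenius (\cref{thm:PF}) applied to $I-G$. The smallest eigenvalue $\lambda<0$ of $G$ is simple and has an eigenvector $c$ with all $c_i>0$. Put $v=\sum c_ie_i$. Then $\langle v,e_j\rangle=(Gc)_j=\lambda c_j<0$ for every $j$, and $\langle v,v\rangle=\lambda|c|^2<0$. So $v$ is timelike and lies strictly inside every half-space. Choosing the upper sheet to contain $\pm v$ gives an interior point of $P$.

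For (ii), suppose $H_k$ does not contribute a facet. Then $-e_k$ lies in the cone dual to $K$, that is, $e_k=-\sum_{i\ne k}a_ie_i$ with $a_i\ge0$. Pairing with $e_k$ gives $1=-\sum a_ig_{ik}\ge0$. Pairing with $v$ gives $\lambda c_k=-\sum a_i\lambda c_i$, hence $c_k=-\sum a_ic_i\le 0$, contradicting $c_k>0$. I expect this redundancy-exclusion step to be the main obstacle. The dual-cone description of redundant half-spaces (Farkas' lemma) must be handled carefully when $K$ is not pointed in the relevant sense. One must also make sure that two facets coinciding, i.e.\ $e_i=e_j$, is excluded; it is, because then $g_{ij}=1>0$.

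Finally, convexity of $P$ is automatic, since it is an intersection of half-spaces. Uniqueness up to isometry follows because any polytope with Gram matrix $G$ yields a spanning realization of $G$, and these realizations are unique up to an isometry preserving $\H^n$.
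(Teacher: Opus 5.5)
The gap is in step (ii); steps (i) and (iii) are essentially fine. Your overall plan is the standard argument behind Vinberg's theorem, which the paper only cites without proof: a spanning realization of $G$, the cone $K$, a timelike interior point from Perron--Frobenius, and uniqueness from uniqueness of the realization.

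\textbf{The gap in step (ii).} First, your Farkas certificate has the wrong sign. The inequality $\langle x,e_k\rangle\le 0$ is implied by the others iff $e_k=\sum_{i\ne k}a_ie_i$ with $a_i\ge 0$. With this sign, pairing with $v$ gives $c_k=\sum_{i\ne k}a_ic_i\ge 0$, which is no contradiction. The contradiction comes instead from pairing with $e_k$: $1=\sum_{i\ne k}a_ig_{ik}\le 0$.

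More seriously, even the corrected argument only shows that $H_k$ is irredundant for the \emph{linear} cone $K\subseteq\mathbb{E}^{n,1}$. That does not make $H_k\cap P$ a facet of $P\subseteq\H^n$. A facet of $K$ can lie entirely in the spacelike region. In that case $P$ has fewer facets, and its Gram matrix is a proper principal submatrix of $G$. For example, in $\H^2$ take the strip between two ultraparallel lines, and a third line lying beyond one of them, oriented so that the strip is on its negative side. The cone has three facets, but $P$ is the two-sided strip. There one Gram entry is positive, so the hypothesis $g_{ij}\le 0$ has to be used again at exactly this point. Your worry about pointedness is beside the point: $K$ is pointed because the $e_i$ span, and Farkas does not need it anyway.

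\textbf{How to repair it.} A short computation makes Farkas unnecessary: project $v$ onto $e_k^\perp$. Put $w_k=v-\lambda c_ke_k$. Then $\langle w_k,e_k\rangle=0$, and for $j\ne k$ we have $\langle w_k,e_j\rangle=\lambda(c_j-c_kg_{kj})\le\lambda c_j<0$. Also $\langle w_k,w_k\rangle=\langle w_k,v\rangle=\lambda|c|^2-\lambda^2c_k^2<0$. So $w_k$ is timelike and lies in the same sheet as $v$. After normalization it is a point of $H_k\cap\H^n$ with a neighbourhood in $H_k\cap\H^n$ contained in $P$. Hence every $H_k$ carries an $(n-1)$-dimensional facet of $P$. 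These facets are pairwise distinct, since $w_k\notin H_j$ for $j\ne k$.

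\textbf{Two smaller corrections.} In (i), the sheet must be the one containing $v$ itself, since $-v\notin K$. In (iii), composing the isometry with a time reflection would destroy $\phi(e_i)=e_i'$. Instead, note that $\phi(v)=v'$ for the common Perron vector $c$. Moreover $v$ lies in the sheet of $P$, because $\langle x,v\rangle=\sum_i c_i\langle x,e_i\rangle\le 0$ for $x\in P$. The same holds for $v'$ and $P'$, so $\phi$ preserves the sheet automatically.
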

    \begin{theorem}[Theorem 2.1 and 3.2 in \cite{vinberg-1985}] \label{Vinberg:thm3.1} 
    	Let $P=\mathop{\cap}\limits_{i\in I} H_i^- \subseteq \H^n$ be an acute-angled polytope and let $G=G(P)$ be its Gram matrix. Denote by $G_J$ the principal submatrix of $G$ with rows and columns $J\subseteq I$. Then, 
    	\begin{enumerate}[\normalfont(i)]
    		\item The intersection $\cap_{j\in J}H_j^-\subseteq \H^n$ is a face $F$ of $P$ if and only if the matrix $G_J$ is positive definite;
    		\item The intersection $\overline{\cap_{j\in J}H_j^-}$ is an ideal vertex of $P$ if and only if the matrix $G_J$ is a parabolic matrix of rank $n-1$.
    	\end{enumerate}
    \end{theorem}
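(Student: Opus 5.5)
The plan is to translate both statements into linear algebra in $\mathbb{E}^{n,1}$, using the vectors $e_j$ ($j\in J$) and a point of $\H^n$ represented by a future timelike vector. One normalization is used throughout: an interior point $y$ of $P$ satisfies $\langle e_i,y\rangle<0$ for all $i\in I$.

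\emph{(i), forward direction.} Let $F=\cap_{j\in J}H_j\cap P$ be a nonempty face and pick $x$ in its relative interior. Every $e_j$, $j\in J$, is orthogonal to the timelike vector $x$. Hence $U=\operatorname{span}\{e_j\}_{j\in J}$ lies in $x^\perp$, which is positive definite, so $G_J$ is positive semidefinite. It remains to show the $e_j$ are linearly independent.
\begin{itemize}
\item Suppose $\sum_j c_je_j=0$. Move the negative coefficients to the other side to get $u=\sum_{c_j>0}c_je_j=\sum_{c_j<0}|c_j|e_j$.
\item Then $\langle u,u\rangle$ is a sum of products $c_j|c_k|\langle e_j,e_k\rangle$ with $j\neq k$. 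Each such term is $\le 0$, because the polytope is acute-angled (off-diagonal Gram entries are nonpositive).
\item Since $u\in x^\perp$ is positive definite, this forces $u=0$.
\item Pairing $u$ with the interior point $y$ gives $\sum_{c_j>0}c_j\langle e_j,y\rangle=0$ with every $\langle e_j,y\rangle<0$. So all positive $c_j$ vanish, and symmetrically all negative ones do too.
\end{itemize}
Therefore $G_J$ is positive definite.

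\emph{(i), converse.} If $G_J$ is positive definite, then $U$ is a positive definite subspace of dimension $|J|$. So $U^\perp$ is Lorentzian and $L=\cap_{j\in J}H_j$ is a nonempty plane of codimension $|J|$ in $\H^n$. The real content is that $L$ actually meets $P$. I would argue by induction on $|J|$, using that every face of an acute-angled polytope is again acute-angled in its own hyperplane. Concretely:
\begin{itemize}
\item Restrict to the facet $F_i=P\cap H_i$. The remaining vectors $e_j$ ($j\in J\setminus\{i\}$) are projected orthogonally to $e_i^\perp$.
\item Because $\langle e_i,e_j\rangle\le 0$, the projected Gram matrix is the Schur complement of $G_J$. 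It is again positive definite and still has nonpositive off-diagonal entries.
\item This reduces the problem to the base case $|J|=2$.
\end{itemize}
The base case is the main obstacle: showing that two facet hyperplanes $H_i,H_j$ with $|\langle e_i,e_j\rangle|<1$ intersect inside $P$, not merely in $\H^n$. For this I would use a nearest-point argument. Take a point $p\in F_i$ minimizing the distance to $H_j$, or at which the function $\langle e_j,\cdot\rangle$ is maximal on $F_i$. If $p\notin H_j$, then $p$ lies on some further facets $H_k$. Acuteness ($\langle e_k,e_j\rangle\le0$, $\langle e_k,e_i\rangle\le0$) lets one move $p$ within $F_i$ toward $H_j$, contradicting the choice of $p$. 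A boundedness/compactness issue at infinity has to be handled here, via finite volume or the fact that $L$ is a genuine plane.

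\emph{(ii).} An ideal vertex corresponds to an isotropic vector $v$ on the boundary of $\overline P$ with $v\perp e_j$ for $j\in J$. Then $U\subseteq v^\perp$, which is positive semidefinite with kernel $\mathbb{R}v$, and $v^\perp/\mathbb{R}v$ is Euclidean of dimension $n-1$. Hence $G_J$ is positive semidefinite, and $v$ being an isolated point of $\overline P\cap\partial\H^n$ forces the image of $U$ to be all of $v^\perp/\mathbb{R}v$, so the rank is $n-1$. Next apply the Perron--Frobenius argument (\cref{thm:PF}) to each indecomposable block. Each block is semidefinite with nonpositive off-diagonals, and the independence argument from (i) shows no block can be positive definite without contradicting the rank and isolation of $v$. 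So every block is degenerate with elliptic proper subdiagrams, that is, $G_J$ is parabolic. For the converse, a parabolic $G_J$ of rank $n-1$ has a radical in $U$ that is realized by an isotropic $v\in U^\perp$, and $v$ is unique up to scale by the rank count. The same nearest-point and induction argument as in (i), run in horospherical coordinates at $v$, then shows that $v$ lies in $\overline P$.
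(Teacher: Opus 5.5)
The paper gives no proof of this statement; it is quoted from Vinberg. So the comparison here is with the standard argument.

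\textbf{What is correct.} Your forward direction of (i) is correct and standard. Semidefiniteness comes from $x^\perp$. Linear independence comes from splitting a relation into positive and negative parts and pairing with an interior point.

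\textbf{The gap: the converse of (i).} This is the actual content of the theorem, and it is not established. Your induction only reduces it to the case $|J|=2$. There you need a point of $F_i$ at which $\langle e_j,\cdot\rangle$ is maximal.

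Your local step is in fact fine. Suppose no feasible direction at $p\in F_i\setminus H_j$ increased $\langle e_j,\cdot\rangle$. Then Farkas writes the projection $b$ of $e_j$ to $p^\perp\cap e_i^\perp$ as a nonnegative combination of the projected active normals. Acuteness gives $\langle b,b\rangle\le 0$, so $b=0$, which forces $g_{ij}^2>1$.

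But nothing in your argument produces a maximizer, and you leave this open.
\begin{itemize}
\item The theorem concerns arbitrary acute-angled polytopes, so finite volume is not available.
\item Even in finite volume, $F_i$ may be noncompact. A maximizing sequence could a priori run into a cusp, where the distance to any hyperplane through the cusp point tends to $0$. Excluding this is essentially the claim itself.
\item That $L$ is a nonempty plane gives no compactness either.
\end{itemize}
So $P\cap L\neq\varnothing$ is not established. The converse of (ii), which you defer to ``the same argument'', inherits the gap.

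\textbf{The missing idea.} It removes both the induction and the compactness question. If $G_J$ is positive definite with nonpositive off-diagonal entries, then $G_J^{-1}$ is entrywise nonnegative. To see this, write $G_J=I-A$ with $A\ge 0$ of spectral radius $<1$, and expand $\sum_k A^k$.

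Take an interior point $x$ and its orthogonal projection onto $U^\perp$,
$x'=x-\sum_{i,j\in J}(G_J^{-1})_{ij}\langle x,e_j\rangle e_i$. It satisfies:
\begin{itemize}
\item $\langle x',e_j\rangle=0$ for $j\in J$;
\item $\langle x',e_k\rangle\le\langle x,e_k\rangle<0$ for $k\notin J$, since each subtracted term $(G_J^{-1})_{ij}\langle x,e_j\rangle g_{ik}$ is $\ge 0$;
\item $\langle x',x'\rangle=\langle x,x\rangle-\langle x-x',x-x'\rangle<0$, since $x-x'\in U$.
\end{itemize}
So $x'$ lies in the relative interior of $P\cap L$, which also gives codimension $|J|$.

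\textbf{The converse of (ii).} Take a positive Perron--Frobenius kernel vector $c$ of a component $G_{J_1}$, and set $v=\sum_{j\in J_1}c_je_j$.
\begin{itemize}
\item Your interior-point trick shows $v\neq0$ and that $v$ is future-pointing.
\item $v$ lies in $U\cap U^\perp$, so it is isotropic, and by the rank count it spans $U^\perp$.
\item $\langle v,e_k\rangle=\sum_j c_jg_{jk}\le 0$ for all $k$, so $[v]\in\overline P$ directly.
\end{itemize}

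\textbf{Two smaller points on your forward direction of (ii).}
\begin{itemize}
\item Rank $n-1$ requires $J$ to be the set of all facets through $v$. For example, two parallel facets through a cusp with $\widetilde{A}_1\times\widetilde{A}_1$ link meet $\overline P$ only at $v$, yet have rank $1$.
\item What excludes a positive definite component is compactness of the cusp cross-section. Such a component would split off an unbounded simplicial-cone factor. Linear independence alone does not exclude it.
\end{itemize}
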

    \begin{theorem}[Theorem 4.2 in \cite{vinberg-1985}] \label{Vinberg:thm4.2} 
    	Let $P$ be an acute-angled polytope and $G$ its Gram matrix. Let $\mathcal{F}$ (resp. $\widetilde{\mathcal{F}}$) be the collection of elliptic submatrices (resp. elliptic and parabolic submatrices) of $G$. Partially order $\mathcal{F}$ (resp. $\widetilde{\mathcal{F}}$) by submatrix relations.
    	Then, $P$ is compact (resp. finite-volume) if and only if any of the following holds:
    	\begin{enumerate}[\normalfont(i)]
    		\item The polytope $P$ contains at least one ordinary vertex (resp. ordinary or ideal vertex). For every ordinary vertex (resp. vertex) of $P$ and any edge of $P$ emanating from it, there is precisely one other ordinary vertex (resp. vertex) of $P$ on that edge.
    		\item The partially ordered set $\mathcal{F}$ (resp. $\widetilde{\mathcal{F}}$) is isomorphic to the poset of some $d$-dimensional abstract combinatorial polytope.
    	\end{enumerate}
    \end{theorem}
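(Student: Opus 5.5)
The statement is Theorem~\ref{Vinberg:thm4.2}, Vinberg's criterion, so the plan is to recall the standard argument. I treat the finite-volume case; the compact case is the same with ideal vertices and parabolic submatrices deleted.

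\textbf{Geometric direction.} By \cref{Vinberg:thm3.1}, faces of $P$ correspond bijectively to elliptic subsets $J\subseteq I$, and ideal vertices to parabolic subsets of rank $n-1$. The correspondence reverses inclusion, and the codimension of the face equals $|J|$. This identifies $\widetilde{\mathcal F}$, ordered by inclusion, with the face poset of $P$. Here the proper faces are augmented by the ideal vertices, and the empty set corresponds to $P$ itself.

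\textbf{Finite volume implies (ii) and (i).} If $P$ has finite volume, then $\overline P$ in $\overline{\H^n}$ is the convex hull of finitely many points of $\H^n\cup\partial\H^n$. Hence it is a genuine $n$-dimensional convex polytope whose face lattice is exactly $\widetilde{\mathcal F}$, which gives (ii). Every edge of a bounded Euclidean polytope (in the Klein model) is a segment with two endpoints, both vertices, which gives (i).

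\textbf{(i) implies finite volume.} Start from a vertex $v$; its link is a simplicial cone or a parabolic cusp. Consider the graph of vertices and edges of $P$. Condition (i) says every edge emanating from a vertex terminates in another vertex. So the connected component of this graph through $v$ is a finite $n$-regular graph: finiteness follows because there are finitely many subsets $J$. At each vertex the emanating edges span the tangent cone.

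Take the convex hull $Q$ of this component's vertices; it is a finite-volume polytope contained in $P$. At each vertex of $Q$ the local cone of $Q$ coincides with that of $P$, since all edges of $P$ at that vertex lie in $Q$. Hence the facets of $Q$ at its vertices lie on hyperplanes $H_i$. Because $Q$ is convex and every facet of $Q$ contains a vertex, $Q$ is cut out by some of the $H_i^-$, so $Q\supseteq P$. Therefore $P=Q$ has finite volume.

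This is the step I expect to need the most care. The point to check is that the supporting half-spaces of $Q$ at its vertices are exactly those of $P$. This uses simplicity of $P$ at ordinary vertices (elliptic $G_J$ with $|J|=n$) and the Euclidean-polytope structure of cusp links at ideal vertices.

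\textbf{(ii) implies (i).} If $\widetilde{\mathcal F}$ is isomorphic to the face poset of an abstract $d$-polytope, the diamond property holds. Every rank-one element (an edge, corresponding to an elliptic $J$ with $|J|=n-1$) lies in exactly two rank-zero elements (vertices). Dimension matching forces $d=n$. Nonemptiness of the poset gives at least one vertex, so (i) holds, and we finish by the previous paragraph.
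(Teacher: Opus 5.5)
The paper gives no proof of this statement; it is quoted from \cite{vinberg-1985}. Your outline is the standard argument. The directions ``finite volume $\Rightarrow$ (i), (ii)'' and ``(i) $\Rightarrow$ finite volume'' are sound: at each vertex of $Q$ the tangent cone equals that of $P$, so every facet hyperplane of $Q$ is some $H_i$, and hence $Q=\overline P$. Two small corrections.
\begin{enumerate}
\item The vertex graph is not $n$-regular: an ideal vertex of type $\Delta^1\times\Delta^3$ in $\H^5$ has $8$ edges. Only finiteness is used, so this is harmless.
\item Identifying $\widetilde{\mathcal F}$ with the face poset requires ``parabolic'' to mean ``parabolic of rank $n-1$''. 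For $n\ge 3$, a pair of parallel facets is a parabolic $\widetilde A_1$ that is not a face, and it produces maximal chains that are too short.
\end{enumerate}

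The genuine gap is in (ii) $\Rightarrow$ (i). The claim ``dimension matching forces $d=n$'' is not justified. Elliptic subsets have at most $n$ elements, which only gives $d\le n$, and $d<n$ does occur.

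Here is an example. For $1<t<2$, the circulant matrix with first row $(1,-\tfrac12,-t,-\tfrac12)$ has eigenvalues $-t,\,1+t,\,1+t,\,2-t$, so its signature is $(3,1)$. By \cref{thm:signature} it is the Gram matrix of an acute-angled polytope $P\subseteq\H^3$ with four facets. Cyclically consecutive facets meet at angle $\pi/3$, and opposite facets are ultraparallel. The elliptic subsets are $\varnothing$, the four singletons and the four consecutive pairs, and there are no parabolic subsets. In the submatrix order this is the face lattice of a square with its top removed. That is exactly the shape $\widetilde{\mathcal F}$ has for a genuine finite-volume polytope. So (ii) holds with $d=2$, yet $P$ has no vertices and infinite volume.

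Hence $d=n$ must be part of hypothesis (ii), as in Vinberg. Granting it, your argument closes as follows.
\begin{itemize}
\item A maximal chain from $\varnothing$ to an elliptic $J$ has length $|J|$. So in your order-reversing identification, $J$ has rank $n-|J|$.
\item Adjoin the empty face as a top element. The diamond property then says each elliptic $J$ with $|J|=n-1$ lies in exactly two maximal elements. These are elliptic $n$-sets or parabolic rank-$(n-1)$ sets containing $J$.
\item Each such maximal element is an endpoint of the edge $P\cap\bigcap_{j\in J}H_j$. In the parabolic case this holds because $J$ is a maximal elliptic subset of the cusp, so it spans an edge of the cusp cone.
\end{itemize}
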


    To classify all five-dimensional hyperbolic Coxeter polytopes with eight facets, we took a two-stage approach. We start by systematically enumerating all potential matrices over admissible $5$-polytopes with eight facets, ensuring they are elliptic around every ordinary vertex and parabolic around every ideal vertex. To speed up the algorithm, we strategically introduce additional geometric constraints during this stage, which serve to refine our search criteria without compromising the comprehensiveness of our classification. In the second stage, we search for polytopes with an indecomposable signature $(n, 1)$ Gram matrix (here $n=5$), thereby producing a full list of the desired polytopes.
    
    \section{Five-dimensional hyperbolic Coxeter polytopes with eight facets}\label{sec:58}
    As we discussed in Introduction, classifications of hyperbolic polytopes are generally unknown. But they are known in several cases where the difference $m-n$ between the number of facets $m$ and the dimension $n$ is small. For $m-n=1$, Lannér classified the compact hyperbolic Coxeter simplices \cite{Lanner:1950}, while several authors classified the noncompact finite-volume simplices \cite{Bou68,vinberg-criterion, Kos67}. For $m-n=2$, Kaplinskaya described the finite-volume Coxeter simplicial prisms \cite{Kap74}; Esselmann later enumerated the remaining compact cases, now known as Esselmann polytopes \cite{Ess96}, and Tumarkin classified the remaining noncompact finite-volume cases \cite{tumarkin-nplustwo}.  For $m-n=3$, Esselmann proved that compact hyperbolic Coxeter $n$-polytopes with $n+3$ facets exist only for $n\le 8$ \cite{Ess94}, and Tumarkin completed their classification \cite{Tum07}. For the noncompact finite-volume case, Tumarkin proved non-existence in dimensions $n\ge 17$, with a unique example in dimension 16 \cite{Tum04b,Tum03}. He also classified pyramids over products of three simplices, which occur only in dimensions $4,5,\ldots,9$ and $13$. The full noncompact finite-volume classification for $m-n=3$ remains open, although Roberts obtained a list in the non-pyramidal case with exactly one non-simple vertex \cite{roberts2016}. 
    
    In search of the gluing block candidates for incommensurable classes, we exhaustively examined all candidates in the above classifications. By a candidate, we mean a pair of Coxeter polytopes whose combinatorial type is that of a truncated simplex and whose truncation facets are codimension-one hyperbolic Coxeter simplices orthogonal to all adjacent non-truncation facets. More specifically, in truncation number one, that is, within the family of $n$-polytopes with $n+2$ facets, a suitable facet must allow doubling. In other words, the dihedral angles between this facet and the facets adjacent to it must be of the form $\pi/m$, with $m$ even, so that the doubled polytope again satisfies the Coxeter condition and produces two orthogonal gluing facets. For truncation number two, one expects both truncation facets to be orthogonal codimension-one simplices, each suitable for gluing to the corresponding facet of its partner.
    
    We identified an appropriate pair of four-dimensional polytopes in the recent census of the second and fourth authors \cite{mz-4d-7f}, namely in the list of $331$ four-dimensional finite-volume hyperbolic Coxeter polytopes with seven facets. This observation motivated us to construct the five-dimensional eight-facet census to address the existence of infinitely many commensurability classes in dimension five. The complete list of Coxeter diagrams can be found in \cref{sec:appendix}. In this section, we will briefly explain the entire algorithm\footnote[2]{Codes available at \url{https://github.com/GeoTopChristy/HCPd}}, with pseudo-codes available in \cref{alg:enumeration}.

    \subsection{Admissible combinatorial types}
    The classification of combinatorial types of $n$-polytopes with $m$ facets, where $m\leq n+3$, has been accomplished using Gale diagrams. If the vertex link of a vertex $v$ in an $n$-polytope is an $(n-1)$-simplex, then $v$ is referred to as a \textit{simple} vertex. Otherwise, the vertex is \textit{non-simple}. An $n$-polytope is termed \textit{simple} if every vertex is simple. For simple $n$-polytopes with $m$ facets, there is a counting formula by Perles, see for example \cite{G:1967} and also \cite{BD:1998}. We obtain the data of $5$-polytopes with eight facets from \cite{FMM:13}. The numbers $0$, $1$, $\cdots$, $7$ are used to label the seven facets in loc. cit., with each line signifying a combinatorial type. Each \textit{facet bracket} corresponds to a vertex that is intersected by facets with labels contained in the bracket. For instance, for the polytope $P_1$ in \cref{table:combinatoricSelction58}, there are $13$ vertices. To be clearer, the first bracket indicates a vertex formed by the intersection of the facets $F_1$, $F_2$, $F_3$, $F_4$, $F_5$, $F_6$, and $F_7$.

	According to \cref{Vinberg:thm3.1}, the subgraphs formed by nodes that correspond to facets intersecting at ideal points of five-dimensional hyperbolic Coxeter polytopes yield rank four parabolic graphs. These diagrams and basic related combinatorial data are listed in \cref{table:rank3Euclidean}. We say a polytope is \textit{admissible} if its vertex links are either $4$-simplices $D_0=\Delta^4$, simplicial $4$-prisms $D_1=\Delta^1\times \Delta^3$, $D_2=\Delta^2\times\Delta^2$, or $D_3=\Delta^2\times\Delta^1\times\Delta^1$. The four polytopes are simple. Among the total of $116$ five-dimensional polytopes with eight facets, we found $39$ admissible polytopes as shown in \cref{table:combinatoricSelction58}. For each vertex whose link is one of the above polytopes, we amalgamate the facets of the intersection complex, giving a single simplex, with commas separating the distinct product factors of simplices. For instance, the first vertex of $P_{28}$ is non-simple: its link is a simplicial $4$-prism, where the facets $F_2$ and $F_7$ yield a $1$-simplex and the facets $F_3, F_4, F_5, F_6$ yield a $3$-simplex. This vertex is thus recorded as $[27, 3456]$.
	
		\begin{table}[H]
		{\scriptsize
			\begin{tabular}{c|c|c|c|c}
				\Xcline{1-5}{1.2pt}
				\hline
				Rank four & \rule{0pt}{12pt} 
                \multirow{2}{*}{Type $A, B, C, D, F$} & 
                $\widetilde{A}_1\sqcup S_i$&
				$S_1\sqcup S_2$, 
                &$S_i\sqcup\widetilde{A}_1\sqcup\widetilde{A}_1$\\
                Parabolic diagrams & 
                &$S_i\in\{\widetilde{A}_3,\widetilde{B}_3,\widetilde{C}_3\}$&
                 $S_i\in\{\widetilde{A}_2,\widetilde{C}_2,\widetilde{G}_2\}$&
                 $S_i\in\{\widetilde{A}_2,\widetilde{C}_2,\widetilde{G}_2\}$
                 \\

                \hline
                
				Combinatorial type & $\Delta^4$& 
                $\Delta^1\times \Delta^3$ & $\Delta^2\times\Delta^2$ & $\Delta^2\times\Delta^1\times\Delta^1$
                \\
				\hline
				\# Facets &5 & 6 & 6& 7\\
				\hline
				\# Vertices & 5& 8 & 9 & 12\\
				
				\Xcline{1-5}{1.2pt}
			\end{tabular}
		}
		
		\caption{Rank $4$ parabolic Coxeter diagrams.}
		\label{table:rank3Euclidean}
	\end{table}

    \begin{center}
    \renewcommand\arraystretch{1.5}
    {\tiny
    \begin{longtable}{c|p{0.88\textwidth}}
    \Xcline{1-2}{1.2pt}
    Label & Facet brackets\\
    \hline
    \endfirsthead
    \Xcline{1-2}{1.2pt}
    Label & Facet brackets\\
    \hline
    \endhead
    1 & [345,  17,  26] [7,6,5,4,0] [7,6,5,3,0] [7,6,4,3,0] [7,5,4,2,0] [7,5,3,2,0] [7,4,3,2,0] [6,5,4,1,0] [6,5,3,1,0] [6,4,3,1,0] [5,4,2,1,0] [5,3,2,1,0] [4,3,2,1,0]\\
    \hline
    2 & [257,  346] [057,  146] [037,  126] [034,  125] [7,6,5,3,1] [7,6,4,2,0] [7,5,4,3,1] [7,4,3,2,1] [7,4,2,1,0] [6,5,4,2,0] [6,5,3,2,0] [6,5,3,1,0]\\
    \hline
    3 & [257,  346] [057,  146] [037,  126] [7,6,5,3,1] [7,6,4,2,0] [7,5,4,3,1] [7,4,3,2,1] [7,4,2,1,0] [6,5,4,2,0] [6,5,3,2,0] [6,5,3,1,0] [5,4,3,2,1] [5,4,2,1,0] [5,3,2,1,0]\\
    \hline
    4 & [3467,  25] [057,  146] [037,  126] [7,6,5,3,1] [7,6,4,2,0] [7,5,4,3,1] [7,4,3,2,1] [7,4,2,1,0] [6,5,4,3,0] [6,5,3,1,0] [6,4,3,2,0] [5,4,3,1,0] [4,3,2,1,0]\\
    \hline
    5 & [3467,  25] [1467,  05] [037,  126] [7,6,5,3,1] [7,6,4,2,0] [7,5,4,3,1] [7,4,3,2,1] [7,4,2,1,0] [6,5,4,3,1] [6,4,3,2,0] [6,4,3,1,0] [4,3,2,1,0]\\
    \hline
    6 & 3467,  25] [1467,  05] [1267,  03] [7,6,5,3,1] [7,6,4,2,0] [7,5,4,3,1] [7,4,3,2,1] [7,4,2,1,0] [6,5,4,3,1] [6,4,3,2,1] [6,4,2,1,0]\\
    \hline
    7 & [267, 345] [057, 146] [7,6,5,3,1] [7,6,4,3,0] [7,6,3,1,0] [7,5,4,2,1] [7,5,3,2,1] [7,4,3,2,1] [7,4,3,1,0] [6,5,4,2,0] [6,5,3,2,1] [6,5,2,1,0] [6,4,3,2,0] [6,3,2,1,0] [5,4,2,1,0] [4,3,2,1,0]\\
    \hline
    8 & [267, 345] [067, 145] [7,6,5,3,1] [7,6,4,3,1] [7,5,4,2,0] [7,5,3,2,1] [7,5,2,1,0] [7,4,3,2,1] [7,4,2,1,0] [6,5,4,2,0] [6,5,3,2,0] [6,5,3,1,0] [6,4,3,2,0] [6,4,3,1,0] [5,3,2,1,0] [4,3,2,1,0]\\
    \hline
    9 & [267, 345] [067, 145]  [7,6,5,3,1] [7,6,4,3,1] [7,5,4,2,0] [7,5,3,2,1] [7,5,2,1,0] [7,4,3,2,1] [7,4,2,1,0] [6,5,4,2,0] [6,5,3,2,1] [6,5,2,1,0] [6,4,3,2,0] [6,4,3,1,0] [6,3,2,1,0] [4,3,2,1,0]\\
    \hline
    10 & [267, 345] [067, 145] [7,6,5,3,1] [7,6,4,3,1] [7,5,4,2,0] [7,5,3,2,1] [7,5,2,1,0] [7,4,3,2,1] [7,4,2,1,0] [6,5,4,2,0] [6,5,3,2,1] [6,5,2,1,0] [6,4,3,2,1] [6,4,2,1,0]\\
    \hline
    11 & [27, 3456] [067, 145] [7,6,5,3,1] [7,6,4,3,1] [7,5,4,3,0] [7,5,3,1,0] [7,4,3,1,0] [6,5,4,2,0] [6,5,3,2,0] [6,5,3,1,0] [6,4,3,2,0] [6,4,3,1,0] [5,4,3,2,0]\\
    \hline
    12 & [27, 3456] [067, 145] [7,6,5,3,1] [7,6,4,3,1] [7,5,4,3,0] [7,5,3,1,0] [7,4,3,1,0] [6,5,4,2,0] [6,5,3,2,1] [6,5,2,1,0] [6,4,3,2,0] [6,4,3,1,0] [6,3,2,1,0] [5,4,3,2,0] [5,3,2,1,0]\\
    \hline
    13 & [27, 3456] [067, 145] [7,6,5,3,1] [7,6,4,3,1] [7,5,4,3,0] [7,5,3,1,0] [7,4,3,1,0] [6,5,4,2,0] [6,5,3,2,1] [6,5,2,1,0] [6,4,3,2,1] [6,4,2,1,0] [5,4,3,2,0] [5,3,2,1,0] [4,3,2,1,0]\\
    \hline
    14 & [27, 3456] [06, 1457] [7,6,5,3,1] [7,6,4,3,1] [7,5,4,3,0] [7,5,3,1,0] [7,4,3,1,0] [6,5,4,2,1] [6,5,3,2,1] [6,4,3,2,1] [5,4,3,2,0] [5,4,2,1,0] [5,3,2,1,0] [4,3,2,1,0]\\
    \hline
    15 & [27, 3456] [06, 1457] [7,6,5,3,1] [7,6,4,3,1] [7,5,4,3,0] [7,5,3,1,0] [7,4,3,1,0] [6,5,4,2,1] [6,5,3,2,1] [6,4,3,2,1] [5,4,3,2,1] [5,4,3,1,0]\\
    \hline
    16 & [27, 3456]] [07, 1456] [7,6,5,3,1] [7,6,4,3,1] [7,5,4,3,1] [6,5,4,2,0] [6,5,3,2,1] [6,5,2,1,0] [6,4,3,2,1] [6,4,2,1,0] [5,4,3,2,0] [5,4,3,1,0] [5,3,2,1,0] [4,3,2,1,0]\\
    \hline
    17 & [27, 3456]] [07, 1456] [7,6,5,3,1] [7,6,4,3,1] [7,5,4,3,1] [6,5,4,2,0] [6,5,3,2,1] [6,5,2,1,0] [6,4,3,2,1] [6,4,2,1,0] [5,4,3,2,1] [5,4,2,1,0]\\
    \hline
    18 & [27, 3456] [7,6,5,4,1] [7,6,5,3,1] [7,6,4,3,0] [7,6,4,1,0] [7,6,3,1,0] [7,5,4,3,0] [7,5,4,1,0] [7,5,3,1,0] [6,5,4,2,1] [6,5,3,2,1] [6,4,3,2,0] [6,4,2,1,0] [6,3,2,1,0] [5,4,3,2,0] [5,4,2,1,0] [5,3,2,1,0]\\
    \hline
    19 & [267, 345] [7,6,5,4,1] [7,6,5,3,1] [7,6,4,3,1] [7,5,4,2,1] [7,5,3,2,0] [7,5,3,1,0] [7,5,2,1,0] [7,4,3,2,0] [7,4,3,1,0] [7,4,2,1,0] [6,5,4,2,1] [6,5,3,2,0] [6,5,3,1,0] [6,5,2,1,0] [6,4,3,2,0] [6,4,3,1,0] [6,4,2,1,0]\\
    \hline
    20 & [267, 345] [7,6,5,4,1] [7,6,5,3,1] [7,6,4,3,1] [7,5,4,2,1] [7,5,3,2,1] [7,4,3,2,0] [7,4,3,1,0] [7,4,2,1,0] [7,3,2,1,0] [6,5,4,2,0] [6,5,4,1,0] [6,5,3,2,0] [6,5,3,1,0] [6,4,3,2,0] [6,4,3,1,0] [5,4,2,1,0] [5,3,2,1,0]\\
    \hline
    21 & [267, 345] [7,6,5,4,1] [7,6,5,3,1] [7,6,4,3,1] [7,5,4,2,1] [7,5,3,2,1] [7,4,3,2,0] [7,4,3,1,0] [7,4,2,1,0] [7,3,2,1,0] [6,5,4,2,1] [6,5,3,2,0] [6,5,3,1,0] [6,5,2,1,0] [6,4,3,2,0] [6,4,3,1,0] [6,4,2,1,0] [5,3,2,1,0]\\
    \hline
    22 & [267, 345] [7,6,5,4,1] [7,6,5,3,1] [7,6,4,3,1] [7,5,4,2,1] [7,5,3,2,1] [7,4,3,2,1] [6,5,4,2,0] [6,5,4,1,0] [6,5,3,2,0] [6,5,3,1,0] [6,4,3,2,0] [6,4,3,1,0] [5,4,2,1,0] [5,3,2,1,0] [4,3,2,1,0]\\
    \hline
    23 & [267, 345] [7,6,5,4,1] [7,6,5,3,1] [7,6,4,3,1] [7,5,4,2,1] [7,5,3,2,1] [7,4,3,2,1] [6,5,4,2,1] [6,5,3,2,0] [6,5,3,1,0] [6,5,2,1,0] [6,4,3,2,0] [6,4,3,1,0] [6,4,2,1,0] [5,3,2,1,0] [4,3,2,1,0]\\
    \hline
    24 & [267, 345] [7,6,5,4,1] [7,6,5,3,1] [7,6,4,3,1] [7,5,4,2,1] [7,5,3,2,1] [7,4,3,2,1] [6,5,4,2,1] [6,5,3,2,1] [6,4,3,2,0] [6,4,3,1,0] [6,4,2,1,0] [6,3,2,1,0] [4,3,2,1,0]\\
    \hline
    25 & [27, 3456] [7,6,5,4,1] [7,6,5,3,1] [7,6,4,3,1] [7,5,4,3,0] [7,5,4,1,0] [7,5,3,1,0] [7,4,3,1,0] [6,5,4,2,1] [6,5,3,2,0] [6,5,3,1,0] [6,5,2,1,0] [6,4,3,2,0] [6,4,3,1,0] [6,4,2,1,0] [5,4,3,2,0] [5,4,2,1,0]\\
    \hline
    26 & [27, 3456] [7,6,5,4,1] [7,6,5,3,1] [7,6,4,3,1] [7,5,4,3,0] [7,5,4,1,0] [7,5,3,1,0] [7,4,3,1,0] [6,5,4,2,1] [6,5,3,2,1] [6,4,3,2,0] [6,4,3,1,0] [6,4,2,1,0] [6,3,2,1,0] [5,4,3,2,0] [5,4,2,1,0] [5,3,2,1,0]\\
    \hline
    27 & [27, 3456] [7,6,5,4,1] [7,6,5,3,1] [7,6,4,3,1] [7,5,4,3,0] [7,5,4,1,0] [7,5,3,1,0] [7,4,3,1,0] [6,5,4,2,1] [6,5,3,2,1] [6,4,3,2,1] [5,4,3,2,0] [5,4,2,1,0] [5,3,2,1,0] [4,3,2,1,0]\\
    \hline
    28 & [27, 3456] [7,6,5,4,1] [7,6,5,3,1] [7,6,4,3,1] [7,5,4,3,1] [6,5,4,2,0] [6,5,4,1,0] [6,5,3,2,0] [6,5,3,1,0] [6,4,3,2,0] [6,4,3,1,0] [5,4,3,2,0] [5,4,3,1,0]\\
    \hline
    29 & [27, 3456] [7,6,5,4,1] [7,6,5,3,1] [7,6,4,3,1] [7,5,4,3,1] [6,5,4,2,1] [6,5,3,2,0] [6,5,3,1,0] [6,5,2,1,0] [6,4,3,2,0] [6,4,3,1,0] [6,4,2,1,0] [5,4,3,2,0] [5,4,3,1,0] [5,4,2,1,0]\\
    \hline
    30 & [27, 3456] [7,6,5,4,1] [7,6,5,3,1] [7,6,4,3,1] [7,5,4,3,1] [6,5,4,2,1] [6,5,3,2,1] [6,4,3,2,0] [6,4,3,1,0] [6,4,2,1,0] [6,3,2,1,0] [5,4,3,2,0] [5,4,3,1,0] [5,4,2,1,0] [5,3,2,1,0]\\
    \hline
    31 & [27, 3456] [7,6,5,4,1] [7,6,5,3,1] [7,6,4,3,1] [7,5,4,3,1] [6,5,4,2,1] [6,5,3,2,1] [6,4,3,2,1] [5,4,3,2,0] [5,4,3,1,0] [5,4,2,1,0] [5,3,2,1,0] [4,3,2,1,0]\\
    \hline
    32 & [7,6,5,4,3] [7,6,5,4,2] [7,6,5,3,2] [7,6,4,3,1] [7,6,4,2,1] [7,6,3,2,1] [7,5,4,3,1] [7,5,4,2,1] [7,5,3,2,1] [6,5,4,3,0] [6,5,4,2,0] [6,5,3,2,0] [6,4,3,1,0] [6,4,2,1,0] [6,3,2,1,0] [5,4,3,1,0] [5,4,2,1,0] [5,3,2,1,0]\\
    \hline
    33 & [7,6,5,4,3] [7,6,5,4,2] [7,6,5,3,2] [7,6,4,3,2] [7,5,4,3,1] [7,5,4,2,1] [7,5,3,2,0] [7,5,3,1,0] [7,5,2,1,0] [7,4,3,2,0] [7,4,3,1,0] [7,4,2,1,0] [6,5,4,3,1] [6,5,4,2,1] [6,5,3,2,0] [6,5,3,1,0] [6,5,2,1,0] [6,4,3,2,0] [6,4,3,1,0] [6,4,2,1,0]\\
    \hline
    34 & [7,6,5,4,3] [7,6,5,4,2] [7,6,5,3,2] [7,6,4,3,2] [7,5,4,3,1] [7,5,4,2,1] [7,5,3,2,1] [7,4,3,2,0] [7,4,3,1,0] [7,4,2,1,0] [7,3,2,1,0] [6,5,4,3,1] [6,5,4,2,1] [6,5,3,2,0] [6,5,3,1,0] [6,5,2,1,0] [6,4,3,2,0] [6,4,3,1,0] [6,4,2,1,0] [5,3,2,1,0]\\
    \hline
    35 & [7,6,5,4,3] [7,6,5,4,2] [7,6,5,3,2] [7,6,4,3,2] [7,5,4,3,1] [7,5,4,2,1] [7,5,3,2,1] [7,4,3,2,1] [6,5,4,3,0] [6,5,4,2,0] [6,5,3,2,0] [6,4,3,2,0] [5,4,3,1,0] [5,4,2,1,0] [5,3,2,1,0] [4,3,2,1,0]\\
    \hline
    36 & [7,6,5,4,3] [7,6,5,4,2] [7,6,5,3,2] [7,6,4,3,2] [7,5,4,3,1] [7,5,4,2,1] [7,5,3,2,1] [7,4,3,2,1] [6,5,4,3,1] [6,5,4,2,0] [6,5,4,1,0] [6,5,3,2,0] [6,5,3,1,0] [6,4,3,2,0] [6,4,3,1,0] [5,4,2,1,0] [5,3,2,1,0] [4,3,2,1,0]\\
    \hline
    37 & [7,6,5,4,3] [7,6,5,4,2] [7,6,5,3,2] [7,6,4,3,2] [7,5,4,3,1] [7,5,4,2,1] [7,5,3,2,1] [7,4,3,2,1] [6,5,4,3,1] [6,5,4,2,1] [6,5,3,2,0] [6,5,3,1,0] [6,5,2,1,0] [6,4,3,2,0] [6,4,3,1,0] [6,4,2,1,0] [5,3,2,1,0] [4,3,2,1,0]\\
    \hline
    38 & [7,6,5,4,3] [7,6,5,4,2] [7,6,5,3,2] [7,6,4,3,2] [7,5,4,3,2] [6,5,4,3,1] [6,5,4,2,1] [6,5,3,2,1] [6,4,3,2,0] [6,4,3,1,0] [6,4,2,1,0] [6,3,2,1,0] [5,4,3,2,0] [5,4,3,1,0] [5,4,2,1,0] [5,3,2,1,0]\\
    \hline
    39 & [7,6,5,4,3] [7,6,5,4,2] [7,6,5,3,2] [7,6,4,3,2] [7,5,4,3,2] [6,5,4,3,1] [6,5,4,2,1] [6,5,3,2,1] [6,4,3,2,1] [5,4,3,2,0] [5,4,3,1,0] [5,4,2,1,0] [5,3,2,1,0] [4,3,2,1,0]\\
    \hline
    \Xcline{1-2}{1.2pt}
    \caption{The 39 admissible combinatorial types.}
    \label{table:combinatoricSelction58}
    \end{longtable}}
    \end{center}
    
    \subsection{Hyperbolically admissible Gram matrices}
    A Gram matrix $G$ of order $8$ is called hyperbolically admissible, also termed SELCper matrix in \cite{mz-4d-7f}, in dimension five if it satisfies all following conditions:
    \begin{enumerate}
        \item If $I \subseteq \{1,\dots,8\}$ is the set of facets incident to an ordinary vertex, then the induced Coxeter diagram on $I$ is spherical and rank $5$.
        \item If $J$ is the set of facets incident to an ideal vertex, then $G_J$ is positive semidefinite of rank 4.
        \item The elliptic and parabolic subdiagrams recover the face poset of the prescribed $5$-polytope, with no spurious facets or missing faces.
        \item If $J$ is the set of facets whose Gram submatrix corresponds to a $k$-simplex type, then $G_J$ is a Lannér diagram if all involved facets are free of ideal vertices, or a quasi-Lannér diagram otherwise.
    \end{enumerate}
    
    We will search for hyperbolically admissible matrices over each admissible combinatorial type of $5$-dimensional polytopes with $8$ facets. To narrow down the possibilities, we apply the weak signature condition: namely, every $7 \times 7$ minor $M_i$ satisfies $\det(M_i) = 0$. Finally, we carry out an if-and-only-if screening using the signature and the form of the matrix entries: all off-diagonal entries contributed by intersecting faces must equal $-\cos(\pi/m_{ij})$. This entire procedure is completely analogous to the treatment of the four-dimensional seven facets case in \cite{mz-4d-7f}, to which the reader is referred for further details. The final enumeration results are presented in \cref{table:result}, with the last two rows for \cite{roberts2016}, \cite{Tum07} and \cite{Tum04b}. The list of all Coxeter diagrams is available in \cref{sec:appendix}.

    \begin{table}[H]
    \centering
    {\scriptsize
    \renewcommand{\arraystretch}{1.45}
    \begin{tabular}{c|c|ccc|cccccc|cccc|cc}
    \Xcline{1-17}{1.2pt}
    
    \multicolumn{1}{c|}{\# Non-simple vertices}
    & 4
    & \multicolumn{3}{c|}{3}
    & \multicolumn{6}{c|}{2}
    & \multicolumn{4}{c|}{1}
    & \multicolumn{2}{c}{0}\\
    \Xcline{1-17}{1.2pt}
    
    \multicolumn{1}{c|}{Polytope labels}
    & 2 & 3 & 4 & 6 & 7 & 8 & 10 & 11 & 15 & 17 & 1 & 22 & 27 & 28 & 37 & 39\\
    \Xcline{1-17}{1.2pt}
    
    \multicolumn{1}{c|}{\# Finite-volume, noncompact}
    & 2 & 2 & 1 & 2 & 2 & 2 & 20 & 2 & 3 & 10 & 15 & 6 & 1 & 54 & 1 & 2\\
    \hline
    
    \multicolumn{1}{c|}{\# Compact}
    & & & & & & & & & & & & & & & 1 & 15\\
    \hline
    
    \makecell[c]{Non-pyramidal with\\ one non-simple vertex}
    & & & & & & & & & & & & 6 & 1 & 54 & & \\
    \hline
    
    \makecell[c]{Compact and pyramidal}
    & & & & & & & & & & & 15 & & & & 1 & 15\\
    \Xcline{1-17}{1.2pt}
    
    \end{tabular}
    }
    
    \hspace*{0.5cm}
    \caption{Statistics of the result. Specifically, for a compact hyperbolic Coxeter polytope to be considered, it must be simple, and thus a combinatorial polytope with non-simple vertices cannot possess the structure of a compact hyperbolic Coxeter polytope as shown blank on line $4$ and line $6$.}
    \label{table:result}
    \end{table}

    \subsection{Validation}		
    
    Hyperbolic Coxeter non-pyramidal $n$-polytopes with $n+3$ facets and exactly one non-simple vertex were previously enumerated by Roberts \cite{roberts2016}. In dimension $n=5$, our classification recovers Roberts's $61$ hyperbolic Coxeter polytopes, occurring on the combinatorial types $P_{22}$, $P_{27}$, and $P_{28}$.
    
    Furthermore, Tumarkin classified non-simple pyramidal Coxeter polytopes in \cite{Tum03} and compact hyperbolic Coxeter $n$-polytopes with $n+3$ facets in \cite{Tum04b}. In dimension five, our results agree with Tumarkin's classification; the corresponding examples are of combinatorial types $P_1$, $P_{37}$ and $P_{39}$.
    \begin{algorithm}[H]
    \caption{Enumeration of hyperbolic Coxeter $5$-polytopes with eight facets}
    \label{alg:enumeration}
    \begin{algorithmic}[1]
    
    \Require The $116$ combinatorial types of $5$-polytopes with eight facets
    \Ensure The set $\mathcal H$ of their finite-volume hyperbolic Coxeter Gram matrices
    
    \State $\mathcal L\gets
    \{\Delta^4,\,
    \Delta^1\times\Delta^3,\,
    \Delta^2\times\Delta^2,\,
    \Delta^2\times\Delta^1\times\Delta^1\}$
    \State $\mathcal A\gets\varnothing$
    
    \ForAll{combinatorial types $P$}
        \State compute the vertex links of $P$
        \If{$\operatorname{link}_P(v)\in\mathcal L$ for every vertex $v$}
            \State $\mathcal A\gets\mathcal A\cup\{P\}$
        \EndIf
    \EndFor
    
    \State $\mathcal H\gets\varnothing$
    
    \ForAll{$P\in\mathcal A$}
        \State assign rank-$5$ spherical diagrams to ordinary vertices
        \State assign rank-$4$ parabolic diagrams to ideal vertices
        \State encode weights greater than $7$ by the placeholder $7$
        \State assemble compatible global diagrams
        \State discard diagrams violating geometric constraints
        \Statex \hspace{\algorithmicindent}
            $\triangleright$ \textit{Lannér, quasi-Lannér, and parabolic-square obstructions}
        \State discard spurious spherical and parabolic subdiagrams
        \State discard decomposable diagrams and duplicates
        \Statex \hspace{\algorithmicindent}
            $\triangleright$ \textit{Duplicates are identified up to facet relabeling.}
        \State let $\mathcal D(P)$ be the remaining diagrams
    
        \ForAll{$D\in\mathcal D(P)$}
            \State construct the symbolic Gram matrix $G=(g_{ij})$
            \State set $g_{ii}=1$
            \State replace placeholder weights by angle variables $a_{ij}$
            \State introduce variables $g_{ij}$ for nonintersecting facets
            \State solve $\det G_{\widehat{i}}=0$ for $i=1,\ldots,8$
    
            \Statex \hspace{\algorithmicindent}
                $\triangleright$ \textit{Coxeter-admissible means
                $a_{ij}=-\cos(\pi/m_{ij})$ and $g_{ij}\leq-1$.}
    
            \ForAll{real solutions $G$}
                \If{$\operatorname{sig}(G)=(5,1,2)$ and $G$ is Coxeter-admissible}
                    \State $\mathcal H\gets\mathcal H\cup\{G\}$
                \EndIf
            \EndFor
        \EndFor
    \EndFor
    
    \State \Return $\mathcal H$
    
    \end{algorithmic}
    \end{algorithm}

    \newpage
    \section{Commensurability invariant}
    \label{sec:comm-inv}
    \subsection{Quasi-arithmetic invariants}
    Given an $n$-dimensional hyperbolic Coxeter polytope $P$ of finite volume with $d$ facets, denote by $k(P)$ and $K(P)$ the Vinberg field and the field generated by entries of the Gram matrix $G(P)=[g_{ij}]_{i,j\leq d}$ of $P$, respectively. A \textit{cyclic product} in the Gram matrix is a product of the form $g_{i_0, i_1}g_{i_1,i_2}\cdots g_{i_{m-1}, i_m}g_{i_m, i_0}$ for some indices $i_0,..., i_m$. The Vinberg field $k(P)$ is therefore by definition the field generated by all cyclic products in $2G(P)$. For simplicity, we will take Vinberg's criterion as our definition of arithmeticity:
    \begin{theoremdefinition}[\cite{vinberg-criterion}]
        A finite-volume hyperbolic Coxeter polytope $P$ is \textup{arithmetic} if and only if
        \begin{enumerate}[\normalfont(i)]
            \item The field $K(P)$ is totally real.
            \item For any real embedding $\s:K(P)\hookrightarrow \R$ nontrivial on $k(P)$, the matrix $\s(G(P))$ is positive semi-definite.
            \item All cyclic products in $2G(P)$ are algebraic integers in $k(P)$.
        \end{enumerate}
        A polytope is said to be \textup{quasi-arithmetic} if the first two are satisfied. It is further said to be \textup{properly quasi-arithmetic} if the last one does not hold.
    \end{theoremdefinition}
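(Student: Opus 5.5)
Since the paper adopts Vinberg's criterion as the definition of arithmeticity, what needs justifying is the equivalence with the usual notion: the reflection group $\Gamma=\Gamma(P)$ is commensurable with a group of units $O(f,\mathcal O_k)$ of an admissible quadratic form $f$ over a totally real field $k$. I would follow Vinberg's original strategy and prove the two directions separately, working throughout with the outward normals $e_1,\dots,e_d\in\mathbb E^{n,1}$ of $P$. The two tools are the reflection formula $s_i(x)=x-2\langle x,e_i\rangle e_i$ and the identity $\operatorname{tr}(s_{i_0}s_{i_1}\cdots)$, which expresses traces of words in reflections as polynomials in cyclic products of $2G(P)$.

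\textbf{Rescaling.} Fix a spanning tree of the Coxeter diagram with all edges included; such a tree exists because $P$ is indecomposable. Rescale each $e_i$ by the product of the entries $2g_{jl}$ along the tree path from a root to $i$. After rescaling, every entry $2\langle e_i,e_j\rangle$ becomes a cyclic product, hence lies in $k=k(P)$. Let $V$ be the $k$-span of the rescaled vectors. Because the Gram matrix has rank $n+1$, $V$ is a $k$-form of $\R^{n+1}$, and the restriction $f$ of the Lorentzian form to $V$ is a $k$-form of signature $(n,1)$. By the reflection formula each $s_i$ preserves $V$, so $\Gamma\subset O(f,k)$.

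\textbf{Sufficiency.} Assume conditions (i)--(iii). Condition (iii) makes the $\mathcal O_k$-module $M$ spanned by the rescaled $e_i$ a $\Gamma$-invariant lattice, since $2\langle x,e_i\rangle\in\mathcal O_k$ for $x\in M$. Thus $\Gamma\subset O(M)$. For each real embedding $\s\neq\mathrm{id}$ of $k$, the form $f^\s$ has Gram matrix $\s(G(P))$, which is positive semi-definite by (ii), so $f^\s$ is positive definite. This says $f$ is admissible, and Borel--Harish-Chandra together with restriction of scalars shows that $O(M)$ is a lattice in $O(n,1)$. Since $\Gamma$ also has finite covolume, it has finite index in $O(M)$, so $P$ is arithmetic.

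\textbf{Necessity (the main obstacle).} Conversely, suppose $\Gamma$ is commensurable with $O(f',\mathcal O_{k'})$ for some admissible $f'$ over a totally real $k'$. The hard step is showing that $k'=k$ and that the $k$-structure $V$ built above is the arithmetic one. I would first use Zariski density of $\Gamma$ (Borel density, since $\Gamma$ is a lattice) to see that the field generated by the traces $\operatorname{tr}\operatorname{Ad}(\gamma)$ is an invariant of the commensurability class. I would then check that the trace identity identifies this field with $k$ from both descriptions, which gives $k=k'$ and, by uniqueness of the form of definition, $f\sim\lambda f'$.

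Integrality then follows: a finite-index subgroup of $\Gamma$ preserves an $\mathcal O_k$-lattice, so every $\gamma\in\Gamma$ has integral characteristic polynomial. The trace identity, applied to suitable words in the reflections, shows inductively that cyclic products in $2G(P)$ are integral, which is condition (iii). For the nontrivial embeddings $\s$, the image $\s(\Gamma)$ lies in the compact group $O(f^\s)$, so the vectors $\s(e_i)$ span a positive semi-definite space; this is condition (ii). Condition (i) holds because every $\s(k)\subset\R$, and $K(P)$ is obtained from $k$ by adjoining square roots of elements whose conjugates are positive, so it is also totally real.

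I expect the delicate point to be the identification of the invariant trace field with $k(P)$, in particular controlling the square roots that separate $K(P)$ from $k(P)$.
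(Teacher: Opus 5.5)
The paper gives no proof of this statement. It cites Vinberg and adopts the criterion as its definition of arithmeticity, so you are reconstructing Vinberg's theorem. Your overall plan is the standard one, but the sufficiency step is false as written.

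Put $\tilde e_i=c_ie_i$, where $c_i$ is the product of the $2g_{jl}$ along the tree path. The displacement $s_i(x)-x=-2\langle x,e_i\rangle e_i$ is a multiple of the \emph{unit} normal $e_i=c_i^{-1}\tilde e_i$. That vector is not in $M$, and in general not even in $V$, since $c_i\in K(P)$ need not lie in $k(P)$. Expressed through $\tilde e_i$, the coefficient is $2\langle x,\tilde e_i\rangle/\langle\tilde e_i,\tilde e_i\rangle$. Condition (iii) makes the numerator integral, but the denominator $c_i^2$ is a product of the numbers $4g_{jl}^2$ along the path and is typically not a unit.

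The arithmetic $(3,3,4)$ triangle already breaks the claim. Take $g_{12}=-\frac{\sqrt2}{2}$, $g_{13}=g_{23}=-\frac12$, so $k=\Q(\sqrt2)$ and (i)--(iii) hold. With the tree $\{12,13\}$ rooted at $1$, one gets $\tilde e_2=-\sqrt2\,e_2$ and $\tilde e_3=-e_3$. Then $s_2(\tilde e_3)=\tilde e_3+\frac{\sqrt2}{2}\tilde e_2\notin M$. So $\Gamma\subset O(M)$ is not established.

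The fix is Vinberg's own. Let $M$ be the $\mathcal O_k$-span of \emph{all} the vectors $v_{i_1,\dots,i_m}$ attached to walks from the root; these are the vectors the paper uses to define the Vinberg form. Then $s_i(v_{i_1,\dots,i_m})=v_{i_1,\dots,i_m}-v_{i_1,\dots,i_m,i}$, so $M$ is $\Gamma$-stable. It now has infinitely many generators, so you must prove it is a lattice:
\begin{enumerate}
\item Every $2\langle v_w,v_{w'}\rangle$ is a cyclic product of $2G(P)$, hence lies in $\mathcal O_k$ by (iii).
\item So $M$ is contained in $\{x\in V: 2\langle x,v_{w_l}\rangle\in\mathcal O_k,\ 0\le l\le n\}$ for any $k$-basis $v_{w_0},\dots,v_{w_n}$ of $V$ chosen among the $v_w$.
\item That module is free of rank $n+1$, and Noetherianity of $\mathcal O_k$ gives finite generation of $M$.
\end{enumerate}

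Your necessity direction is a roadmap rather than a proof. The trace identity computes ordinary traces and yields only $\Q(\tr\Gamma)=k(P)$. The commensurability invariant you invoke, however, is $\Q(\tr\operatorname{Ad}\Gamma)$. The inclusion $k(P)\subseteq\Q(\tr\operatorname{Ad}\Gamma)$ is the real content of Vinberg's theorem, and you assert it rather than prove it. It also has nothing to do with the square roots separating $K(P)$ from $k(P)$; those only enter the easy condition (i).
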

    Let $Q(P)$ be the Vinberg form associated to $P$ is the restriction of the Lorentzian product, on the hyperboloid model $\H^n\subseteq \R^{n, 1}$ with normal unit vectors $\{e_i\}$, to the vector space over $k(P)$ spanned by vectors of the form
    \[v_1=2e_1,\quad v_{i_1,\dots,i_m}=2^mg_{1,i_1}g_{i_1,i_2}\cdots g_{i_{m-1}, i_m}e_{i_m}.\] 
    Two quadratic forms $(V_1, Q_1)$ and $(V_2, Q_2)$ over some field $k$ are \textit{similar} if there is an isomorphism $\varphi$ from $V_1$ to $V_2$ and a scalar $\lambda\in k^\times$ such that $q_1$ is isomorphic to $\lambda q_2\circ\varphi$ as quadratic forms on $V_1$. We restate a commensurability criterion first shown by Gromov and Piatetski-Shapiro in \cite{gps-important} and generalized by Dotti \cite{dotti-good-stuff}.
    \begin{theorem}\label{thm:arith-inv}
    Two finite-volume Coxeter polytopes are commensurable only if their Vinberg fields coincide, and their associated Vinberg forms are similar. Furthermore, if the polytopes are arithmetic, then the converse is true.
    \end{theorem}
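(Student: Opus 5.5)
The strategy is to pass from polytopes to their reflection groups and to extract from each group an algebraic object attached only to its Zariski closure. Both the Vinberg field and the similarity class of the Vinberg form are then read off from that object, and both visibly depend only on the commensurability class.

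Let $\Gamma_1,\Gamma_2\subseteq \mathrm{Isom}(\H^n)=PO(n,1)$ be the reflection groups of $P_1,P_2$. Commensurability means that, after conjugating $\Gamma_2$ by an isometry $g$, the intersection $\Gamma_1\cap g\Gamma_2g^{-1}$ has finite index in both. So it suffices to show two things. First, the pair (Vinberg field, similarity class of the Vinberg form) is unchanged under conjugation, which is clear since conjugation is an isometry of $\R^{n,1}$ and carries one form isometrically onto the other. Second, the pair is unchanged under passing to a finite-index subgroup. For this I will use Vinberg's description, which I take as known from \cite{vinberg-criterion}. Let $\Gamma^{(2)}$ be the subgroup generated by squares; it is normal of finite index, and for any finite-index $\Gamma'\le \Gamma$ the group $(\Gamma')^{(2)}$ has finite index in $\Gamma^{(2)}$. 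Vinberg's description then says two things:
\begin{enumerate}[\normalfont(i)]
    \item $k(P)=\mathbb{Q}(\tr \operatorname{Ad}\gamma : \gamma\in\Gamma^{(2)})$;
    \item the $k(P)$-span of the orbit $\Gamma^{(2)}$ applied to the vectors $v_{i_1,\dots,i_m}$ is a $\Gamma^{(2)}$-invariant $k(P)$-form $V$ of $\R^{n,1}$, and $Q(P)$ is the restriction of the Lorentzian form to $V$.
\end{enumerate}

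The key step is to show that both objects depend only on the Zariski closure, combined with Borel density. Since $\Gamma^{(2)}$ is a lattice in $PO(n,1)$, Borel density makes it Zariski dense, and the same holds for any finite-index subgroup $\Delta\le\Gamma^{(2)}$. The trace field of $\operatorname{Ad}\Delta$ equals that of $\operatorname{Ad}\Gamma^{(2)}$ by the standard argument for Zariski-dense subgroups of finite index in a semisimple group: the trace field of a finite-index subgroup contains the traces of all of $\Gamma^{(2)}$, because $\tr\operatorname{Ad}(\gamma)$ can be recovered polynomially, with rational coefficients, from traces of powers $\gamma^N\in\Delta$. For the form, the $k$-span $V_\Delta$ of $\Delta v$ sits inside $V$ and is again $\Delta$-invariant of full dimension by irreducibility of the standard representation of a Zariski-dense subgroup. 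Hence $V_\Delta\otimes\R=\R^{n,1}$ and $V_\Delta$ is a $k$-form of $V$. Two $k$-forms of an absolutely irreducible representation preserved by the same group differ by a scalar, which forces the two quadratic forms to be similar.

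For the converse, suppose $P_1,P_2$ are arithmetic with equal fields $k$ and similar forms. Then $\Gamma_i$ is commensurable with $O(Q_i,\mathcal{O}_k)$. A similarity $\varphi$ between $Q_1$ and $Q_2$ induces a $k$-isomorphism $O(Q_1)\cong O(Q_2)$, and since $\lambda$ is a scalar, $\varphi$ acts on $\H^n$ as an isometry. This $k$-isomorphism carries an arithmetic subgroup onto an arithmetic subgroup, and any two arithmetic subgroups of a $k$-group are commensurable, which gives the converse.

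I expect the main obstacle to be the rigorous identification in (i) and (ii): that the cyclic-product field and the span of the vectors $v_{i_1,\dots,i_m}$ coincide with the invariant trace field and the invariant $k$-form of $\Gamma^{(2)}$. I would cite this from \cite{vinberg-criterion} rather than reprove it, and then apply \cite{gps-important,dotti-good-stuff} for the final assembly.
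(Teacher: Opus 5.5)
The paper gives no proof of this statement; it cites Gromov--Piatetski-Shapiro and Dotti, and your outline follows the standard argument behind those results. Two of your steps are fine. In the form step, two $\Delta$-invariant $k$-forms of an absolutely irreducible representation differ by a real scalar $c$, and $c^2\in k$ because both restricted forms are $k$-valued. The converse also works, once you normalize $\varphi$ by $\lambda^{-1/2}$; note $\lambda>0$ is forced by the signature.

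\emph{The gap is in the field step.} The form step depends on it, since it compares two forms over a single field $k$. The inclusion $\mathbb{Q}(\operatorname{tr}\operatorname{Ad}\Delta)\subseteq\mathbb{Q}(\operatorname{tr}\operatorname{Ad}\Gamma^{(2)})$ is trivial. Your reason for the reverse inclusion is that $\operatorname{tr}\operatorname{Ad}\gamma$ is a rational polynomial in the traces of the powers $\gamma^N$ lying in $\Delta$. This is false already for one element. In $SO(2,1)\cong PSL_2(\mathbb{R})$, take $\gamma$ hyperbolic with eigenvalues $\lambda^{\pm1}$ in $SL_2(\mathbb{R})$ and $\lambda^2+\lambda^{-2}=\sqrt5$. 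Then $\operatorname{tr}\operatorname{Ad}\gamma=1+\sqrt5$. On the other hand $\lambda^4+\lambda^{-4}=3$, so $\operatorname{tr}\operatorname{Ad}\gamma^{2j}=\lambda^{4j}+1+\lambda^{-4j}\in\mathbb{Q}$ for every $j$. Hence if $\gamma^2\in\Delta$ but $\gamma\notin\Delta$, no rational expression in the traces of the powers of $\gamma$ in $\Delta$ recovers $\operatorname{tr}\operatorname{Ad}\gamma$. Zariski density has to enter through the whole subgroup $\Delta$, and your argument never actually uses it.

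What is missing is Vinberg's theorem that the adjoint trace field of a Zariski-dense subgroup of a semisimple group is a commensurability invariant. Its proof goes through the algebra spanned by $\operatorname{Ad}\Delta$ and Skolem--Noether. For reflection groups you can also argue directly, as follows.
\begin{enumerate}[\normalfont(1)]
\item Let $\rho$ be the standard representation of $\Gamma$ on the Vinberg $k$-form $V$. Then $k=k(P)=\mathbb{Q}(\operatorname{tr}\rho(\Gamma))$: traces of products of reflections are polynomials in cyclic products, and conversely by induction on length.
\item Let $\Delta$ be a normal subgroup of finite index, $F=\mathbb{Q}(\operatorname{tr}\rho(\Delta))\subseteq k$, and $\sigma\colon k\to\mathbb{C}$ an embedding fixing $F$.
\item The twist $\rho^\sigma=\rho\otimes_{k,\sigma}\mathbb{C}$ is absolutely irreducible on $\Delta$, because Borel density and Burnside give $k[\rho(\Delta)]=\operatorname{End}_k(V)$. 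It has the same character as $\rho$ on $\Delta$, so $\rho^\sigma|_\Delta=T\rho|_\Delta T^{-1}$ for some $T$.
\item Schur's lemma then gives $\rho^\sigma(\gamma)=\chi(\gamma)\,T\rho(\gamma)T^{-1}$ for a character $\chi$ of $\Gamma/\Delta$. Taking traces, $\sigma(\operatorname{tr}\rho(\gamma))=\chi(\gamma)\operatorname{tr}\rho(\gamma)$.
\item For a generating reflection, $\operatorname{tr}\rho(R_i)=n-1\in\mathbb{Q}^\times$, so $\chi(R_i)=1$. Hence $\chi\equiv 1$ and $\sigma$ is the identity on $k$.
\end{enumerate}
Therefore $F=k$. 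Passing to normal cores, every finite-index subgroup of $\Gamma$ has trace field exactly $k(P)$. In particular $k(P_1)=\mathbb{Q}(\operatorname{tr}\rho(\Gamma_1\cap g\Gamma_2g^{-1}))=k(P_2)$, and with this in place your form step goes through.
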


    The arithmetic invariants in \cref{thm:arith-inv} are increasingly difficult to use when polytopes have more facets. We will repeat important results from Bogachev-Douba-Raimbault. The following statements from their paper are, respectively, Corollary~2.1.1 and Lemma~3.2, and the resulting commensurability criterion obtained by combining these results.
    
    Let $L$ be a semisimple real Lie group and $\Gamma$ a lattice in $L$. The \textit{adjoint trace field} of $\Gamma$ is the subfield $\langle\tr(\operatorname{adj} g):g\in\Gamma)\rangle\subseteq\R$. If a hyperbolic Coxeter polytope $P$ with reflection group $\Gamma$ has finite volume, then $k(P)$ coincides with $k(\Gamma)$. The \textit{ambient group} of $\Gamma$ in $L$ is the unique up to $k(\Gamma)$-isogeny $k(\Gamma)$-group $G$ such that $G(\R)$ is isogenous to $L$ and $G(k(\Gamma))$ virtually contains $\Gamma$ under this isogeny. 
    \begin{corollary}\label{cor:same-ambient}
        If $\Gamma_1$ and $\Gamma_2$ are quasi-arithmetic lattices in $L$ and $\Gamma_1\cap\Gamma_2$ is Zariski-dense in $L$, then the ambient groups of $\Gamma_1$ and $\Gamma_2$ coincide.
    \end{corollary}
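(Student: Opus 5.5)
The plan is to prove \cref{cor:same-ambient} by showing that the ambient group of a quasi-arithmetic lattice is determined by any Zariski-dense subgroup of it. Then $\Gamma_1$ and $\Gamma_2$, which share the Zariski-dense subgroup $\Delta=\Gamma_1\cap\Gamma_2$, must have the same ambient group. Throughout, $L$ is semisimple and we work with its adjoint form, so all isogenies can be absorbed.

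\textbf{Step 1: the fields agree.} By Vinberg's theorem on the trace field, the adjoint trace field of a Zariski-dense subgroup of $L$ depends only on the subgroup up to finite index. More precisely, for any Zariski-dense $\Delta\le\Gamma$ one has $k(\Delta^{(2)})=k(\Gamma^{(2)})$, where $\Gamma^{(2)}$ is the subgroup generated by squares. Applying this to $\Delta\le\Gamma_1$ and to $\Delta\le\Gamma_2$ gives $k(\Gamma_1)=k(\Delta)=k(\Gamma_2)=:k$ after passing to these finite-index subgroups. The ambient group is unchanged by passing to finite index, because it only needs to virtually contain the lattice.

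\textbf{Step 2: the $k$-forms agree.} Let $G_i$ be the ambient $k$-group of $\Gamma_i$. Quasi-arithmeticity means that, in the adjoint representation, $\operatorname{Ad}\Gamma_i$ virtually lies in $G_i(k)\subseteq\mathrm{GL}(V_i)$ for a $k$-structure $V_i$ on $\mathfrak{l}$. Concretely, $V_i$ can be taken to be the $k$-span $k[\operatorname{Ad}\Gamma_i]\cdot X$ for a suitable vector $X$, or equivalently to be given by the $k$-algebra $A_i=k[\operatorname{Ad}\Gamma_i^{(2)}]\subseteq\operatorname{End}(\mathfrak{l})$, which is a $k$-form of $\operatorname{End}(\mathfrak{l})$.

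Since $\Delta$ is Zariski-dense, its $\R$-span is all of $\operatorname{End}(\mathfrak l)$ by Burnside/density. Hence $k[\operatorname{Ad}\Delta^{(2)}]$ is already a $k$-form of $\operatorname{End}(\mathfrak l)$: its $k$-dimension is at most $\dim\operatorname{End}(\mathfrak l)$, because it lies in $A_i$, and at least that, because it spans over $\R$. It follows that $A_1=k[\operatorname{Ad}\Delta^{(2)}]=A_2$.

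The ambient group is the Zariski closure of $\operatorname{Ad}\Gamma_i$, taken with its $k$-structure coming from $A_i$. This $k$-structure is the same as the $k$-structure determined by $\Delta$, since $\Delta$ is $k$-rational and Zariski-dense and therefore its Zariski closure is defined over $k$ with respect to the common $k$-structure. Consequently $G_1\cong G_2$ over $k$, up to isogeny.

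\textbf{Main obstacle.} The delicate point is that the ambient group must be defined intrinsically, independently of the choice of rational structure, so that "coincide" makes sense. I would handle this by fixing the definition via the Vinberg $k$-structure $A_\Gamma=k[\operatorname{Ad}\Gamma^{(2)}]$ and checking that it is well defined. The check has two parts. First, it is invariant under passing to finite-index subgroups, which is the same density argument as in Step 2. Second, it agrees with the paper's definition: $G(\R)$ is isogenous to $L$, and $G(k)$ virtually contains $\Gamma$. The second part is where quasi-arithmeticity is used, since it guarantees that $\Gamma$ is virtually contained in $G(k)$ in the first place. Once this is in place, the argument above shows that the common Zariski-dense subgroup $\Delta$ pins down the same $k$-structure for both lattices.
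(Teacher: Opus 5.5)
Your Step 2 is sound once the two lattices are known to have the same adjoint trace field. Step 1, which is meant to supply this, rests on a false claim. Vinberg's theorem makes the adjoint trace field invariant under passage to \emph{finite-index} subgroups. For a Zariski-dense subgroup $\Delta\le\Gamma$ of infinite index one only has $k(\Delta)\subseteq k(\Gamma)$, and the inclusion can be strict. For instance, fix the representation on a pair-of-pants subgroup of a closed surface group in $PSL_2(\R)$; this subgroup is Zariski-dense, and varying the twist parameters leaves it and its trace field unchanged while changing the trace field of the whole surface group. Two such surface groups with different generic twists are lattices with Zariski-dense intersection and different trace fields, so some further hypothesis is genuinely needed. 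Your phrase ``after passing to these finite-index subgroups'' silently assumes that $\Delta=\Gamma_1\cap\Gamma_2$ has finite index in each $\Gamma_i$. That is not a hypothesis, and it fails exactly where the paper uses the corollary: in \cref{lem:gl-comm}, $\Delta$ is the monodromy of a component of $U_1\cap U_2$. As a result your argument never uses quasi-arithmeticity, and your last paragraph misplaces it. That $\Gamma$ is virtually contained in $G(k(\Gamma))$ for some $k(\Gamma)$-form $G$ of $L$ holds for every Zariski-dense subgroup by Vinberg's theorem, hence for every lattice by Borel density. Quasi-arithmeticity is the extra condition that $k(\Gamma)$ is totally real and $G^\sigma(\R)$ is compact for every nontrivial real embedding $\sigma$.

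That compactness is precisely what gives the missing inclusion $k(\Gamma_i)\subseteq k(\Delta)$. Put $k'=k(\Delta)\subseteq k_i=k(\Gamma_i)$. Apply your own dimension count to the $k'$-form $k'[\operatorname{Ad}\Delta]$ of $\operatorname{End}(\mathfrak l)$ given by Vinberg's lemma. It shows that $A_i=k_i[\operatorname{Ad}\Delta]\cong k'[\operatorname{Ad}\Delta]\otimes_{k'}k_i$, hence $G_i\cong G'\otimes_{k'}k_i$, where $G'$ is the $k'$-group determined by $\Delta$. If $k'\neq k_i$, choose a real embedding $\sigma\neq\mathrm{id}$ of the totally real field $k_i$ that fixes $k'$ pointwise. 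Then $G_i^\sigma\cong G'\otimes_{k'}\R\cong G_i\otimes_{k_i}\R$ is noncompact, contradicting quasi-arithmeticity. Therefore $k(\Gamma_1)=k(\Delta)=k(\Gamma_2)$, and your Step 2 finishes the proof; Step 2 itself needs no quasi-arithmeticity. In \cref{lem:gl-comm} a common field $k$ is assumed outright, so Step 2 alone would suffice there. The corollary as stated, however, makes no such assumption.
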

    \begin{lemma}\label{lem:qsiarith-inv}
        Let $k$ be a field of characteristic $0$, let $m\geq2$, and let $Q_1, Q_2$ be two nondegenerate quadratic forms on $k^m$. Then $PO(Q_1)$ is $k$-isogenous to $PO(Q_2)$ if and only if $Q_1, Q_2$ are similar over $k$, i.e., if and only if there exists $\lambda\in k^\times$ such that $Q_1$ and $\lambda Q_2$ are isometric over $k$.
    \end{lemma}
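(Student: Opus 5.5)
The plan is to treat the two implications separately. The direction ``similar $\Rightarrow$ isogenous'' is formal. The converse will be handled by passing to $\bar k$, identifying the isogeny with conjugation by a similitude, and descending that similitude to $k$ with Hilbert's Theorem~90.

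\emph{Similar implies isogenous.} Suppose $\varphi\colon k^m\to k^m$ is a $k$-linear isomorphism with $Q_1\cong\lambda Q_2\circ\varphi$ for some $\lambda\in k^\times$. Since $O(\lambda Q_2)=O(Q_2)$ as $k$-groups, conjugation $g\mapsto \varphi g\varphi^{-1}$ gives a $k$-isomorphism $O(Q_1)\to O(Q_2)$. This map carries the centre $\{\pm1\}$ to the centre, so it descends to a $k$-isomorphism $PO(Q_1)\to PO(Q_2)$, which is in particular a $k$-isogeny.

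\emph{Isogenous implies similar.} First reduce to an isomorphism. The groups $PO(Q_i)$ are adjoint, with identity components $PSO(Q_i)$, and a central $k$-isogeny between adjoint semisimple groups is a $k$-isomorphism. So we may assume we have a $k$-isomorphism $f\colon PO(Q_1)\to PO(Q_2)$.

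Next work over $\bar k$, where both $Q_i$ become the standard split form $Q$ after base change. Every $\bar k$-automorphism of $PO_m(Q)$ is conjugation by an element of $PGO_m(Q)$, the group of similitudes modulo scalars.
\begin{itemize}
\item For $m\ge 3$ and $m\ne 4,8$ this follows because the Dynkin diagram of type $B$ or $D_\ell$ ($\ell\neq 4$) has outer automorphisms realised by reflections, which already lie in $O_m$.
\item For $m=8$ the triality automorphisms of $PSO_8$ do not extend to automorphisms of $PO_8$, because they fail to preserve the class of reflections in the non-identity component. So again every automorphism comes from $PGO_8$.
\end{itemize}
Hence there is $g\in GL_m(\bar k)$ with $g^{t}Q_2\,g=\mu Q_1$ for some $\mu\in\bar k^\times$, such that $f$ is induced by conjugation by $g$.

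Now descend. For $\sigma\in\operatorname{Gal}(\bar k/k)$, the map $f$ is defined over $k$, so $\sigma(g)g^{-1}$ induces the trivial automorphism of $PO(Q_2)$. It therefore centralises $O(Q_2)$ modulo scalars, which forces it to be a scalar: $\sigma(g)=c_\sigma g$ with $c_\sigma\in\bar k^\times$. The family $(c_\sigma)$ is a $1$-cocycle with values in $\mathbb{G}_m$. By Hilbert's Theorem~90 it is a coboundary, so after rescaling $g$ by an element of $\bar k^\times$ we get $g\in GL_m(k)$. Then $\mu=(g^tQ_2g)_{ij}/(Q_1)_{ij}$ for any nonzero entry, so $\mu\in k^\times$. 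Thus $Q_1\cong\mu^{-1}Q_2\circ g$ over $k$, i.e.\ $Q_1$ and $Q_2$ are similar.

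The main obstacle is the claim that every $\bar k$-automorphism is induced by a similitude, in the low-rank cases where it is not automatic.
\begin{itemize}
\item For $m=2$, $PO(Q)$ is an extension of $\mathbb{Z}/2$ by a one-dimensional torus split by $k(\sqrt{-\operatorname{disc}Q})$. A $k$-isomorphism matches these splitting fields, so the discriminants agree up to squares. Binary forms of the shape $a\langle1,-d\rangle$ with the same $d$ are similar, which gives the result.
\item For $m=4$, $PSO(Q)$ is a product of two $PGL_2$ factors, possibly twisted by the discriminant extension, and there are exotic automorphisms. In this case I would argue via invariants instead: $f$ must match the discriminant field $\ell$ and the quaternion algebra over $\ell$ (the even Clifford algebra). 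By the classical classification, these two invariants determine a $4$-dimensional form up to similarity.
\end{itemize}
For the application in this paper, $m=n+1\geq 5$, so only the generic argument above is needed. The remaining point is the case $m=8$, where one has to check carefully that including the non-identity component of $PO$ rules out triality.
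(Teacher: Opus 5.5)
The paper does not prove this lemma: it imports it as Lemma~3.2 of Bogachev--Douba--Raimbault. So there is no in-paper argument to match, and your proposal has to stand on its own. It does. Your route is the standard one: show that every $\bar k$-automorphism of $PO_m$ is conjugation by a similitude, then descend the conjugating matrix with Hilbert~90. This is valid for all $m\geq 3$. Compared with the bare citation, it gives a self-contained argument and makes explicit the case $m=8$, which the paper actually needs for its seven-dimensional blocks.

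Some points should be corrected or filled in.

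\emph{The case $m=4$ is not exceptional.} Here $D_2=A_1\times A_1$ and $\operatorname{Aut}(PGL_2\times PGL_2)=(PGL_2\times PGL_2)\rtimes\mathbb{Z}/2$. The swap is induced by a reflection, e.g.\ $X\mapsto X^{t}$ on $(M_2,\det)$. Hence $\operatorname{Aut}(PO_4)=PO_4$, and your generic argument already covers $m=4$, including the Schur step since $SO_4$ acts irreducibly on $\bar k^4$. There are no exotic automorphisms, and the detour through even Clifford algebras is unnecessary.

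\emph{Passing from $PSO_m$ to $PO_m$.} Your Dynkin-diagram remark only describes $\operatorname{Aut}(PSO_m)$; for even $m$ you must pass to the disconnected group. An automorphism $\phi$ of $PO_m$ preserves $PSO_m$, and $\phi|_{PSO_m}$ normalizes the image of $PO_m$ in $\operatorname{Aut}(PSO_m)$. The outer part of that image is generated by the reflection class. For $m\neq 8$ this is all of $\operatorname{Out}$. For $m=8$ it is a transposition in $S_3$, which is self-normalizing; this is the precise reason triality is excluded. So $\phi$ agrees on $PSO_m$ with conjugation by some element of $PO_m$. It then agrees everywhere, because the centralizer of $PSO_m$ in $PO_m$ is trivial.

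\emph{The scalar step.} The claim that $c=\sigma(g)g^{-1}$ ``centralises modulo scalars, hence is a scalar'' needs two ingredients. First, $x\mapsto cxc^{-1}x^{-1}$ is a morphism $SO(Q_2)\to\mu_2$, hence trivial by connectedness. Second, Schur's lemma applies because $SO(Q_2)$ is absolutely irreducible for $m\geq 3$.

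\emph{The case $m=2$.} Your reduction from isogeny to isomorphism via adjointness does not apply. Indeed $PO_2\cong T\rtimes\mathbb{Z}/2$ has center $\mu_2$ and nontrivial self-isogenies, such as $t\mapsto t^2$ on $T$. This is harmless: the splitting field of a one-dimensional torus is an isogeny invariant. So your $m=2$ argument goes through with ``isogeny'' in place of ``isomorphism''.
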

   \begin{remark}
        In particular, we will test for distinct ambient groups of quasi-arithmetic polytopes by comparing the determinant square classes (in odd dimensions) and Hasse invariants of the quadratic forms of the polytopes.
    \end{remark}
    
    \subsection{Nonarithmetic invariants for noncompact polytopes}
    The superfluous commensurability invariants of arithmetic polytopes are no longer effective in a process of gluing, as the resulting polytopes are usually nonarithmetic and the invariants become very difficult to compute. We thus recall another weaker invariant for nonarithmetic, which proves to be particularly effective for certain families constructed in this paper.
    
    A \textit{horoball packing} in $\H^n$ is a set of horoballs with disjoint interiors. A \textit{cusp neighborhood} of a hyperbolic orbifold $O=\H^n/\Gamma$ is simply a connected open subset that lifts to a horoball packing in $\H^n$. A \textit{cusp} is therefore the end determined by equivalent cusp neighborhoods, that is, neighborhoods that contain a common smaller cusp neighborhood. By definition, cusps exist only if the orbifold is noncompact; we further assume the orbifold has finite volume.
    \begin{lemma}[\cite{ghh-cusped-comm}]
        Two finite volume hyperbolic cusped orbifolds cover a common orbifold if and only if each of them admits cusp neighborhoods lifting to isometric horoball packings in $\H^n$.
    \end{lemma}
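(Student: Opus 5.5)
The plan is to prove both directions by passing through the universal cover $\H^n$ and working with the lattices $\Gamma_1,\Gamma_2$ of the two orbifolds $O_i=\H^n/\Gamma_i$. Covering a common orbifold will be equivalent to commensurability up to isometry: $g\Gamma_1g^{-1}$ and $\Gamma_2$ share a finite-index subgroup for some $g\in\operatorname{Isom}(\H^n)$. I will replace $\Gamma_1$ by $g\Gamma_1g^{-1}$ and assume they are commensurable.

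\emph{Forward direction.} Suppose $O_1\to O$ and $O_2\to O$ are finite orbifold covers, with $O=\H^n/\Gamma$ and $\Gamma_i\le\Gamma$ of finite index. The steps are:
\begin{enumerate}[(1)]
\item Since $O$ has finitely many cusps and finite volume, choose a cusp neighborhood $U\subseteq O$: a union of small disjoint cusp neighborhoods, one per cusp.
\item Its preimage $\widetilde U\subseteq\H^n$ is a $\Gamma$-invariant horoball packing.
\item Since the covering maps are local isometries compatible with the universal cover, the preimage of $U$ in each $O_i$ is a cusp neighborhood of $O_i$.
\item That preimage lifts to the same packing $\widetilde U$.
\end{enumerate}
Hence both orbifolds admit cusp neighborhoods lifting to the identical packing, and in particular to isometric packings.

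\emph{Converse.} Suppose $O_i$ has a cusp neighborhood whose lift is a packing $\mathcal B_i$, with $\phi(\mathcal B_1)=\mathcal B_2$ for some isometry $\phi$. Conjugating $\Gamma_1$ by $\phi$, I may assume $\mathcal B_1=\mathcal B_2=:\mathcal B$. Then both $\Gamma_i$ lie in the symmetry group
\[ S=\{g\in\operatorname{Isom}(\H^n): g(\mathcal B)=\mathcal B\}, \]
since each $\mathcal B_i$ is $\Gamma_i$-invariant. The aim is to show $S$ is discrete. Then $S$ is a lattice containing $\Gamma_1$, so $[S:\Gamma_i]=\vol(O_i)/\vol(\H^n/S)<\infty$, and both $O_i$ cover $\H^n/S$.

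\emph{Discreteness of $S$ (main obstacle).} The packing $\mathcal B$ is locally finite, being a discrete set of horoballs. Fix a horoball $B\in\mathcal B$, which is a lift of a cusp of $O_1$. Its stabilizer in $\Gamma_1$ acts cocompactly on the horosphere $\partial B$, so the tangency or nearest-neighbor pattern of the other balls of $\mathcal B$ seen from $\partial B$ is a discrete, cocompact pattern.

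Suppose $g_k\to\mathrm{id}$ in $S$. By local finiteness, for large $k$ each $g_k$ fixes every horoball meeting a fixed compact set, hence fixes finitely many horoballs setwise. If these horoballs have enough distinct centers, say $n+1$ of them with at least a frame's worth in general position, then $g_k$ fixes $n+1$ boundary points in general position together with the horoballs there. This forces $g_k=\mathrm{id}$, so $S$ is discrete.

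The general-position claim is where care is needed. I would argue it from the fact that the set of centers of $\mathcal B$ is the union of the parabolic fixed-point orbits of the lattice $\Gamma_1$. This set is dense in $\partial\H^n$, so it is not contained in any proper sub-sphere. One also has to handle elements that fix all the chosen balls: such an element is an isometry fixing many ideal points, and it is trivial once those points span. Once $S$ is discrete, finite covolume follows since $S\supseteq\Gamma_1$, which completes the converse.
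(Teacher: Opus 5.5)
Your proof is correct and follows the same route as the paper, which cites Goodman--Heard--Hodgson and observes three things: both groups lie in the symmetry group of the common horoball packing, this group is discrete, and so both orbifolds cover its quotient. Your pull-back argument for the forward direction and your discreteness argument fill in what the paper leaves to the citation: a symmetry close to the identity must fix each of $n+1$ chosen horoballs with spanning centers, because their interiors are disjoint, and is therefore trivial. Choose the compact set after picking those horoballs, and delete the opening claim that covering a common orbifold is equivalent to commensurability up to isometry, which is false for arithmetic lattices (this is why the paper invokes Margulis's theorem before deriving its cusp-density corollary), although your argument never actually uses it.
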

    \begin{proof}
        We refer to Lemmas 2.1, 2.2, and 2.3 in \cite{ghh-cusped-comm} for the proof. Indeed, if two orbifolds admit neighborhoods lifting to isometric horoball packings in $\H^n$, then they cover $\H^n/\Gamma$ where the symmetric groups of the horoball packings generate $\Gamma$. This group is discrete, so the lemmas in loc. cit. apply.
    \end{proof}
    A cusp neighborhood is said to be \textit{maximal} if no cusp neighborhood properly contains it. If an orbifold $O$ has only one cusp, then there is a unique maximal cusp neighborhood $N$ around that cusp. We thus define the \textit{cusp density} of $O$ to be the ratio $\cd(O)=\vol(N)/\vol(O)$. Finite covers preserve cusp densities. Note that if we assume the orbifolds are nonarithmetic, by Margulis's theorem, their commensurators are discrete. In this case, the orbifolds are commensurable if and only if they cover a common orbifold. Thus, the lemma has the following corollary.
    \begin{corollary}\label{cor:cd-incomm}
        If two nonarithmetic hyperbolic one-cusped orbifolds of finite volumes are commensurable, then they have the same cusp density.
    \end{corollary}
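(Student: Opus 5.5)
The plan is to combine Margulis's commensurator theorem, the lemma of \cite{ghh-cusped-comm}, and the fact that cusp density is multiplicative-invariant under finite covers of one-cusped orbifolds. Let $O_1=\H^n/\Gamma_1$ and $O_2=\H^n/\Gamma_2$ be nonarithmetic, one-cusped, of finite volume, and commensurable.

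\textbf{Step 1: reduce to a common quotient.} Since $O_1$ is nonarithmetic, Margulis's theorem says the commensurator $C=\operatorname{Comm}(\Gamma_1)$ is discrete and contains $\Gamma_1$ with finite index. Commensurability of $O_1$ and $O_2$ means that, after conjugating $\Gamma_2$ by an isometry, $\Gamma_1\cap\Gamma_2$ has finite index in both. Hence $\Gamma_2\subseteq\operatorname{Comm}(\Gamma_1\cap\Gamma_2)=C$, so both $\Gamma_i$ are finite-index subgroups of the lattice $C$. Thus $O_1$ and $O_2$ both finitely cover $O=\H^n/C$. Alternatively, one can cite the lemma of \cite{ghh-cusped-comm} together with the remark preceding the corollary.

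\textbf{Step 2: the common quotient has one cusp.} A finite orbifold cover $O_i\to O$ maps cusps onto cusps surjectively, and $O_i$ has exactly one cusp. Therefore $O$ has exactly one cusp, and so it has a unique maximal cusp neighborhood $N$.

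\textbf{Step 3: cusp density is preserved under the cover.} Let $p\colon O_i\to O$ have degree $d$. I would show that $p^{-1}(N)$ is exactly the maximal cusp neighborhood $N_i$ of $O_i$. It is a cusp neighborhood: it lifts to the same horoball packing in $\H^n$, and it is connected because $O_i$ has only one cusp. It is maximal: the lift of $N_i$ is a $\Gamma_i$-invariant horoball packing, centered at the $C$-orbit of parabolic points, which is a single $\Gamma_i$-orbit. Take the horoballs of largest common size such that the $C$-translates have disjoint interiors. The maximal neighborhood in $O_i$ is built from the same orbit of horoballs and is constrained by the same tangencies. Concretely, maximality in both cases is governed by the first time two horoballs in the single orbit become tangent, and this orbit is the same set. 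Hence $N_i=p^{-1}(N)$, giving $\vol(N_i)=d\,\vol(N)$ and $\vol(O_i)=d\,\vol(O)$. Therefore $\cd(O_i)=\cd(O)$ for $i=1,2$, and so $\cd(O_1)=\cd(O_2)$.

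\textbf{Main obstacle.} The delicate step is Step 3: showing that the maximal cusp neighborhood pulls back to the maximal one. The point to check carefully is that the set of horoball centers is the same for $\Gamma_i$ and for $C$ (the parabolic fixed points of both groups), and that equivariant horoball families for a single orbit are determined by one radius. With that in hand, maximality is the same tangency condition in both groups. Once this is verified, the remainder is formal. It also justifies the paper's remark that finite covers preserve cusp densities.
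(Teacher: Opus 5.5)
Your proposal is correct and follows the same route as the paper: Margulis's theorem makes the commensurator a lattice, so both orbifolds finitely cover a common quotient, and cusp density is invariant under finite covers. You also supply details the paper only asserts, namely that the common quotient is one-cusped and that the maximal cusp neighborhood pulls back to the maximal one; the latter holds because the $\Gamma_i$- and $C$-equivariant horoball families on the single orbit of parabolic points coincide, using that the stabilizer in $C$ of a parabolic point preserves its horospheres.
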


    \subsection{Commensurability in gluing nonarithmetic blocks}
    A facet $F$ of a Coxeter polytope $P$ is called \textit{admissible} if every facet of $P$ that meets $F$ along a codimension-two face is orthogonal to $F$. Equivalently, in the Coxeter diagram of $P$, every node corresponding to a facet incident to $F$ is not joined by a solid edge to the node corresponding to $F$. Suppose $P_1, P_2$ are two $n$-dimensional polytopes such that $P_i$ contains a facet $F_i$, and $F_1$ is isometric to $F_2$ via some isometry $\varphi$. Suppose the two polytopes can be glued together along $F_1\simeq_\varphi F_2$, and form a new Coxeter polytope. Then a Coxeter polytope of the same dimension can be obtained by gluing $P_1$ and $P_2$ along $\varphi$. We denote by $P_1\cup_\varphi P_2$ the resulting polytope. It is immediate that $\vol(P_1\cup_\varphi P_2)=\vol(P_1)+\vol(P_2)$.
    \begin{lemma}
    \label{lem:decreasing-density}
        Suppose $P_1$ is a one-cusped polytope with finite volume and $P_2$ is a compact polytope. If the unique maximal cusp neighborhood $N(P_1)$ of $P_1$ has a closure not intersecting the gluing facet $F_1$, then we have the following:
        \[\vol N(P_1)=\vol N(P_1\cup_\varphi P_2).\]
        Here $N(P_1\cup_\varphi P_2)$ is the unique maximal cusp neighborhood of the new one-cusped polytope. Furthermore,
        \[\operatorname{cd}(P_1)>\operatorname{cd}(P_1\cup_{\varphi} P_2).\]
    \end{lemma}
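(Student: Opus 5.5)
The plan is to identify the maximal cusp neighborhood of $P=P_1\cup_\varphi P_2$ with the image of $N(P_1)$. Then the strict inequality for cusp densities follows from the additivity of volume.

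\textbf{Step 1: set-up.} Since $P_2$ is compact, all ideal vertices of $P$ come from $P_1$. Hence $P$ is one-cusped, with the same ideal vertex $v$ as $P_1$. We regard the orbifolds as $\H^n/\Gamma_i$ and $\H^n/\Gamma$, where $\Gamma_1,\Gamma$ are the reflection groups of $P_1,P$. A cusp neighborhood of the polytope orbifold is then a region $B\cap P$, where $B$ is a horoball centered at $v$, such that the $\Gamma$-translates of $B$ have disjoint interiors.

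Equivalently, the intersection $B\cap P$ should be a "horoball sector" meeting only the facets through $v$ and embedding in the quotient. Concretely, $N(P_1)=B_1\cap P_1$ for the largest horoball $B_1$ at $v$ with this property. Maximality means that $\partial B_1$ becomes tangent to some $\Gamma_1$-translate $\gamma B_1$. Equivalently, the horosphere meets the image of itself under a reflection or a product of reflections. In the polytope picture, $\overline{N(P_1)}$ touches some facet $F$ of $P_1$ not through $v$, at a point where the reflected horoball $r_F(B_1)$ is tangent to $B_1$.

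\textbf{Step 2: $N(P_1)$ is a cusp neighborhood of $P$.} The hypothesis $\overline{N(P_1)}\cap F_1=\emptyset$ means $B_1\cap P$ lies in $P_1$ and stays away from the gluing facet. The facets of $P$ through $v$ are facets of $P_1$ (possibly extended across $F_1$, but orthogonality of $F_1$ to its neighbors makes the extension lie in the same hyperplane). So near $v$ the local Coxeter structure is unchanged.

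To show $B_1$ is precisely invariant under $\mathrm{Stab}_\Gamma(v)$, argue via the Dirichlet-type tiling. The $\Gamma$-translates of $P$ tile $\H^n$. The set $B_1\cap P$ together with its translates under the stabilizer fills $B_1$, because $B_1$ only meets facets through $v$ and facets of $P_1$ other than $F_1$ that it is tangent to (or avoids). Hence $B_1$ is a union of tiles-portions around $v$. Two distinct translates $\gamma B_1$ cannot overlap, since their intersection would lie in the interior of some tile $gP$ and then would give two overlapping portions of $B_1\cap P$ inside $P$.

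\textbf{Step 3: maximality.} The tangency that obstructed enlarging $B_1$ in $P_1$ occurs at a facet $F\neq F_1$ of $P_1$. This is because $\overline{N(P_1)}$ misses $F_1$. The facet $F$ survives as (part of) a facet of $P$, with the same reflection. Hence the same tangency $B_1$ versus $r_F B_1$ occurs in $\Gamma$, and $B_1$ cannot be enlarged. So $N(P)=B_1\cap P=N(P_1)$ and the volumes agree.

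Here is a caveat. The maximal horoball in $P_1$ might be stopped not by a single facet reflection but by a longer word. I would handle this via the tiling: the tangency point lies on $\partial P_1\setminus F_1$ or at an interior point of a common tile. Near that point, $P$ and $P_1$ have identical local tiling, since the tiling differs only along translates of $F_1$, which are avoided.

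\textbf{Step 4: density inequality.} We have $\vol(P)=\vol(P_1)+\vol(P_2)>\vol(P_1)$ with $\vol(P_2)>0$. Therefore
$\cd(P)=\vol N(P_1)/\vol(P)<\vol N(P_1)/\vol(P_1)=\cd(P_1)$.

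\textbf{Main obstacle.} The main obstacle is Step 2/3: rigorously showing that no new element of $\Gamma$ (involving reflections in facets of $P_2$) produces an overlap with $B_1$. I would attack it by noting that any such overlap must first cross a translate of $F_1$ inside the tile structure. Since $\overline{B_1\cap P}$ stays a positive distance from $F_1$, this cannot happen.
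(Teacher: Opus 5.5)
Your proposal is correct and is essentially the paper's own argument. The paper also shows that the horoball $B$ cannot enter any attached copy $\gamma P_2$: by convexity of $B$ and of $\gamma(P_1\cup_\varphi P_2)$, a segment from $B\cap\gamma P_1$ into $\gamma P_2$ would have to cross $\gamma F_1$, which $\overline{B}$ misses. Hence $B\cap X=B\cap P_1$ is still an embedded cusp neighborhood. Maximality comes from the same surviving tangent facet $E\neq F_1$, and the density inequality from additivity of volume. The differences are only presentational. The paper phrases embeddedness through the cusp chambers $\bigcup_{\gamma\in\Gamma_v}\gamma P_1\subseteq\bigcup_{\gamma\in\Gamma_v}\gamma X$ rather than your tiling and precise-invariance language. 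Your caveat about longer words is unnecessary, since any tangency of $\partial B$ with a wall $\gamma E$, $\gamma\in\Gamma_v$, translates back to a tangency with a facet $E$ of $P_1$.
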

    \begin{proof}
        The second claim follows immediately from the first and the fact that $\vol(P_1\cup_\varphi P_2)=\vol(P_1)+\vol(P_2)>\vol(P_1)$. 

        We first note that under the assumption of this lemma, the gluing facet $F_1\simeq_\varphi F_2$ is compact. Thus $X=P_1\cup_\varphi P_2$ has exactly one cusp $v$, the one coming from the ideal vertex of $P_1$. Let $\Gamma_1$ and $\Gamma_X$ be the reflection groups of $P_1$ and $X$, respectively. Let $\Gamma_v$ be the parabolic reflection subgroup generated by the reflections in the facets containing $v$. The facets containing $v$ are unchanged by the gluing, so the same group $\Gamma_v$ is the cusp subgroup for both $P_1$ and $X$. Choose a horoball $B\subseteq\H^n$, based at $v$, such that $N(P_1)=B\cap P_1$. 
        
        Consider the cusp chambers $\mathcal C_1=\cup_{\gamma\in\Gamma_v}\gamma P_1$ and $\mathcal C_X=\cup_{\gamma\in\Gamma_v}\gamma X$. The boundary of $\mathcal C_1$ is formed by the $\Gamma_v$-translates of the facets of $P_1$ that do not contain $v$. Since $N(P_1)$ is maximal, $\partial B$ is tangent to at least one boundary wall $\gamma E\subseteq\partial\mathcal C_1$, where $E$ is a facet of $P_1$ not containing $v$. Every element of $\Gamma_v$ preserves $B$. Hence the hypothesis $\overline{N(P_1)}\cap F_1=\varnothing$ implies $\overline B\cap\gamma F_1=\varnothing$ for every $\gamma\in\Gamma_v$. It follows that the tangent wall $\gamma E$ cannot be a translate of the gluing facet. Thus $E\neq F_1$. The additional part of $\mathcal C_X$ is obtained by attaching a copy of $P_2$ across each wall $\gamma F_1$. We claim that $B$ does not enter any of these attached copies. 
        
        Suppose, to the contrary, that $B$ meets some copy $\gamma P_2$. Choose $x\in B\cap\operatorname{int}(\gamma P_1)$ sufficiently close to $v$, and choose $y\in B\cap\gamma P_2$. Both $B$ and $\gamma\bigl(P_1\cup_{\varphi}P_2\bigr)$ are convex. Therefore the geodesic segment $[x,y]$ lies in $B$ and must cross the common facet $\gamma F_1$. This gives $B\cap\gamma F_1\neq\varnothing$, contradicting the preceding paragraph. Consequently, $B\cap\mathcal C_X=B\cap\mathcal C_1$, and in particular $B\cap X=B\cap P_1$. Thus $B$ determines a cusp neighborhood of $X$ having the same volume as $N(P_1)$. It remains to prove maximality. The facet $E\neq F_1$ survives as a boundary facet of $X$. Hence $\gamma E$ remains a reflecting wall for $\Gamma_X$, and $\partial B$ remains tangent to $\gamma E$. Let $B'$ be a horoball based at $v$ that properly contains $B$. Since $\partial B$ is tangent to $\gamma E$, the horoball $B'$ crosses $\gamma E$. Reflection in $\gamma E$ therefore sends $B'$ to a horoball whose interior overlaps the interior of $B'$. Thus the $\Gamma_X$-translates of $B'$ do not form a horoball packing, so $B'$ cannot define an embedded cusp neighborhood of $X$. It follows that $N(X)=B\cap X$ is the maximal cusp neighborhood of $X$. Therefore $\vol N(X)=\vol N(P_1)$.
    \end{proof}

    \subsection{Commensurability in gluing quasi-arithmetic blocks}
    \label{sec:gluing-qsi-arith}
    In this section, we show that results from \cite{bdr-incomm-4-5} still hold for noncompact polytopes. While their commensurability invariant criteria are applicable regardless of compactness, the gluing lemmas in their paper have to be adjusted with care. We start with an orbifold, quasi-arithmetic analog of Lemma~3.5 in \cite{gl-manifold-comm}.
    \begin{lemma}[orbifold version of Lemma 3.5 in \cite{gl-manifold-comm}]\label{lem:gl-comm}
        Let $X_1$ and $X_2$ be two suborbifolds with totally geodesic boundary contained in two quasi-arithmetic hyperbolic $n$-orbifolds whose ambient groups are not $k$-isogenous, $k$ being the trace field of both orbifolds. Assume either that $\partial X_1$ and $\partial X_2$ are compact and $n\geq 3$, or that $\partial X_1$ and $\partial X_2$ have finite volume and $n\geq 4$. If $O$ is a complete hyperbolic $n$-orbifold and $U_1, U_2\hookrightarrow O$ are embedded suborbifolds with totally geodesic boundary admitting finite isometric orbifold covers $p_i:U_i\to X_i$ for $i=1, 2$, then $U_1\cap U_2$ has empty interior.
    \end{lemma}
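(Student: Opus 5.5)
We argue by contradiction, as in Lemma~3.5 of \cite{gl-manifold-comm}: suppose $U_1\cap U_2$ has nonempty interior. Write $O=\H^n/\Gamma$, and choose a connected component $W$ of $\operatorname{int}(U_1)\cap\operatorname{int}(U_2)$, which is a nonempty open set. Fix a lift $\widetilde W\subseteq\H^n$. Let $\widetilde U_i$ be the component of the preimage of $U_i$ containing $\widetilde W$. It is a convex region bounded by totally geodesic hyperplanes, with stabilizer $\Lambda_i\le\Gamma$ and $U_i=\widetilde U_i/\Lambda_i$. The covers $p_i$ identify $\widetilde U_i$ isometrically with the universal cover $\widetilde X_i$ of $X_i$. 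Under this identification, $\Lambda_i$ becomes a finite-index subgroup of $\pi_1^{orb}(X_i)$, and hence of the ambient quasi-arithmetic lattice $\Delta_i$ of the orbifold containing $X_i$. After conjugating, we may regard $\Delta_1,\Delta_2$ and $\Gamma$ as subgroups of $L=\mathrm{Isom}(\H^n)$ with $\Lambda_1,\Lambda_2\le\Gamma$ and $\Lambda_i$ commensurable into $\Delta_i$.

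The key step is to show that $\Lambda_1\cap\Lambda_2$ is Zariski-dense in $L$. We pass to the totally geodesic boundary. The region $\widetilde U_1\cap\widetilde U_2$ is a convex open set with nonempty interior. First suppose it is all of $\H^n$, or contains an open set whose $\Lambda_i$-orbits are recurrent. Then it is more convenient to use the boundary hyperplanes instead: since $U_i$ has nonempty totally geodesic boundary with finite volume, $\Lambda_i$ contains the lattice $\Lambda_i^H$ of a boundary hyperplane $H$, acting with finite covolume on $H$. Choose a boundary component $\partial\widetilde U_1$ that meets $\widetilde U_2$; its stabilizer is a lattice in $\mathrm{Isom}(H)$, and the intersection with $\widetilde U_2$ is a convex subset of $H$ with nonempty interior. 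The plan is to run the Gelander--Levit argument one dimension down. Applied inside $H\cong\H^{n-1}$, the Borel density theorem shows that the elements of $\Lambda_1\cap\Lambda_2$ stabilizing a piece of $H$ contain a lattice of $\mathrm{Isom}(H)$, or of a totally geodesic $\H^{n-2}$ when the boundaries cross. Such lattices are Zariski-dense in $O(n-1,1)$, respectively $O(n-2,1)$, once $n-1\ge2$, respectively $n-2\ge 2$. This is exactly where the hypotheses enter: in the compact case $n\geq3$ suffices, while finite volume requires $n\geq4$ so that a cusped codimension-two stratum is still a lattice in a group of real rank one with Zariski-dense lattices. Together with reflections of $\Lambda_1\cap\Lambda_2$ in hyperplanes transverse to $H$, the Zariski closure then contains $O(n-1,1)$ and an element not normalizing it, and so equals $L$ by maximality of $O(n-1,1)$ in $O(n,1)$.

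Once Zariski-density is established, \cref{cor:same-ambient}, applied to the quasi-arithmetic lattices $\Delta_1,\Delta_2$ (which are virtually commensurable with $\Gamma$ through $\Lambda_1\cap\Lambda_2$), shows that their ambient groups coincide up to $k$-isogeny. This contradicts the hypothesis, so $U_1\cap U_2$ has empty interior.

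The main obstacle is the noncompact boundary case. Here one must check that $\partial\widetilde U_1\cap\widetilde U_2$ actually supports a finite-covolume subgroup of $\Lambda_1\cap\Lambda_2$, rather than a piece escaping into a cusp where only parabolic elements survive. I would handle this by noting that $\partial U_i$ is a finite-volume orbifold, so its thick part is compact. I would then use that a nonempty open set of a finite-volume hyperbolic $(n-1)$-orbifold intersected with a convex totally geodesic region carries, by a Poincaré recurrence and closing argument, enough loxodromic elements to generate a Zariski-dense subgroup once $n-1\ge3$.
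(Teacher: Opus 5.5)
Your overall skeleton is the paper's. You assume $U_1\cap U_2$ has interior, exhibit a Zariski-dense group $\Lambda_1\cap\Lambda_2$ lying in $g_1\Delta_1g_1^{-1}\cap g_2\Delta_2g_2^{-1}$, and contradict \cref{cor:same-ambient}. There are two slips in the setup. First, $\pi_1^{orb}(X_i)$ has infinite covolume in $\Delta_i$ when $\partial X_i\neq\varnothing$, so $\Lambda_i$ is only a subgroup of a conjugate of $\Delta_i$, not a finite-index one. Second, $\Delta_i$ is not commensurable with $\Gamma$. Only the containment is needed, and the paper obtains it from uniqueness of developing maps. The real difference is that the paper does not reprove Zariski density: it imports it from the geometric part of the proof of Lemma 3.5 in \cite{gl-manifold-comm}. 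You try to prove it yourself, and that argument has a genuine gap.

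First, Borel density only says that a lattice is Zariski dense. It gives no reason why $\Lambda_1\cap\Lambda_2$ should contain a lattice of $\mathrm{Isom}(H)$ or of $\mathrm{Isom}(\H^{n-2})$. What does give this is embeddedness. A lift $H$ of $\partial U_1$ lies entirely in $\widetilde U_1$, and lifts of $\partial U_i$ are pairwise disjoint. So either $H\subseteq\widetilde U_2$, or $H$ crosses a lift $H'$ of $\partial U_2$ and the whole plane $P=H\cap H'$ lies in $\widetilde U_1\cap\widetilde U_2$. The $\Gamma$-stabilizer of $H$ (resp.\ $P$) must then preserve $H$ and $H'$, hence $\widetilde U_1$ and $\widetilde U_2$, so it lies in $\Lambda_1\cap\Lambda_2$. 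It is a lattice because $\partial U_1$, resp.\ the properly embedded totally geodesic locus $\partial U_1\cap\partial U_2$, has finite volume. This last point is where $n\geq4$ actually enters in the cusped case. For $n=3$ the crossing locus can be a cusp-to-cusp geodesic of infinite length with trivial stabilizer, whereas vertical cusp ends of dimension $\geq2$ have finite volume. Your stated reason is off, since infinite discrete subgroups of $SO(1,1)$ are Zariski dense anyway, and it also contradicts your own claim that $n\geq3$ suffices in the compact case.

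Second, nothing supplies ``reflections of $\Lambda_1\cap\Lambda_2$ in hyperplanes transverse to $H$''. In the crossing case you only get a copy of $SO(n-2,1)$ in the Zariski closure, so maximality of $O(n-1,1)$ is not the right tool. What must be shown is that $\Lambda_1\cap\Lambda_2$ is not contained in a proper algebraic subgroup containing that copy, for instance the stabilizer of $H$, of $P$, or of a hyperplane through $P$. This is a genuinely geometric step. It has to use the finite volume of $(\widetilde U_1\cap\widetilde U_2)/(\Lambda_1\cap\Lambda_2)$ together with the fact that every face of $\widetilde U_1\cap\widetilde U_2$ contains an entire hyperplane or an entire codimension-two plane. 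Those planes are unbounded precisely because $n\geq3$, which is what prevents the region from being trapped near a proper subspace.

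Your closing recurrence/closing argument does not fill this gap. It produces loxodromic elements, but nothing prevents all of them from lying in, say, $\mathrm{Stab}(H)$. The unresolved case split (``first suppose it is all of $\H^n$, or \dots'') does not help either. Either cite the Zariski-density step from \cite{gl-manifold-comm}, as the paper does, or carry out the argument along the lines above.
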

    \begin{proof}
        Let $M_i=\Gamma_i\backslash\mathbb H^n$ be the quasi-arithmetic hyperbolic orbifold containing $X_i$, and let $G_i$ be the ambient $k$-group of $\Gamma_i$. Suppose that $U_1\cap U_2$ has nonempty interior. Since $M_i$ has finite volume and $X_i\subseteq M_i$ is embedded, $X_i$ has finite volume. Since $p_i:U_i\to X_i$ is a finite isometric orbifold covering, $U_i$ also has finite volume, and $\partial U_i$ is compact or has finite volume according to the corresponding hypothesis on $\partial X_i$.

        The geometric part of the proof of Lemma 3.5 in \cite{gl-manifold-comm} now applies to $U_1$ and $U_2$. More precisely, its proof shows, using only the finite-volume and boundary hypotheses, that if $U_1\cap U_2$ contains an open set, then the monodromy of the orbifold fundamental group of some connected component of $U_1\cap U_2$ is Zariski-dense in $SO(n,1)$. This part of its proof applies unchanged to orbifolds: one replaces fundamental groups by orbifold fundamental groups and, whenever necessary, passes to torsion-free finite-index subgroups. Zariski density and the relevant finite-index conclusions are unaffected by this passage.

        Let $\Delta<SO(n,1)$ be the resulting Zariski-dense monodromy group. For each $i$, the composition $U_i\to X_i\hookrightarrow M_i$ is a local isometry. By uniqueness of developing maps, there exists $g_i\in\operatorname{Isom}(\mathbb H^n)$ such that $g_i\Delta g_i^{-1}\subseteq\Gamma_i$. Consequently, after replacing $\Gamma_i$ by the conjugate $\Gamma_i'=g_i^{-1}\Gamma_i g_i$, we have $\Delta\subseteq\Gamma_1'\cap\Gamma_2'$. In particular, $\Gamma_1'\cap\Gamma_2'$ is Zariski-dense in $SO(n,1)$. The lattices $\Gamma_1'$ and $\Gamma_2'$ are quasi-arithmetic and have the same ambient groups, up to $k$-isogeny, as $\Gamma_1$ and $\Gamma_2$. \cref{cor:same-ambient} therefore implies that $G_1$ and $G_2$ are $k$-isogenous. This contradicts the hypothesis. Hence $U_1\cap U_2$ has empty interior.
    \end{proof}
    \begin{remark}
        We note that Lemma 3.5 in \cite{gl-manifold-comm} is stated for arithmetic manifolds, where they only require incommensurability. This result still holds for quasi-arithmetic manifolds with distinct ambient groups. Indeed, the commensurability criterion of \cite{gps-important} quoted at the beginning of their proof of Lemma 3.5 can be replaced by \cref{cor:same-ambient}, and everything else follows verbatim.
    \end{remark}

    We are now ready to replicate the results of \cite{bdr-incomm-4-5}. The following two sets of assumptions are for convenience.
    \begin{assumption}\label{ass:we-can}
        Assume $P_1$ and $P_2$ are two quasi-arithmetic Coxeter polytopes in $\H^n$, $n\geq 4$, with finite volume and distinct ambient groups. Suppose $P_1$ has one admissible facet, and $P_2$ contains two admissible facets, all isometric to the same Coxeter polytope $A$ in $\H^{n-1}$. Assume further that the two facets in $P_2$ are nonadjacent. By adjacent, we mean the two facets meet at a face of codimension two, so having closures merely joining at a cusp is allowed.
    \end{assumption}
    \begin{assumption}\label{ass:we-can-now}
        Assume $P_1$ and $P_2$ are two quasi-arithmetic Coxeter polytopes in $\H^n$, $n\geq 4$, with finite volume and distinct ambient groups. Suppose each $P_i$ contains two admissible facets, both isometric to the same Coxeter polytope $A$ in $\H^{n-1}$. Assume further that the two facets are nonadjacent.
    \end{assumption}
    The following results are straightforward analogs of Proposition~4.1, 4.2 from Bogachev-Douba-Raimbaul.
    \begin{proposition}\label{prop:bdr-linear}
        Let $P_1, P_2$ be a pair of polytopes satisfying \cref{ass:we-can}. For $m\geq 1$, cap one $P_1$ block to $m$ copies of $P_2$ glued together. Then the resulting polytopes $L_m$ are pairwise incommensurable.
    \end{proposition}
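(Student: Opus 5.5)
The plan follows the proof of Proposition~4.1 in \cite{bdr-incomm-4-5}, replacing their compact gluing lemma by \cref{lem:gl-comm}, which allows finite-volume boundary when $n\geq 4$.

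\textbf{Setup.} Let $F\subseteq P_1$ be the admissible facet and $F',F''\subseteq P_2$ the two nonadjacent admissible facets, all isometric to $A$. Gluing $m$ copies of $P_2$ in a chain along $F''\simeq F'$ and capping with $P_1$ along $F$ gives a Coxeter polytope $L_m$. The dihedral angles at the gluing faces are $\pi/2+\pi/2=\pi$, so those walls disappear, and admissibility keeps every other angle of the form $\pi/k$. Nonadjacency of $F',F''$ ensures the glued pieces meet only along the prescribed copies of $A$, so $L_m$ is convex with finite volume
\[
\vol(L_m)=\vol(P_1)+m\vol(P_2).
\]

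\textbf{Orbifold picture.} View $O_m=\H^n/\Gamma_{L_m}$ as the reflection orbifold of $L_m$. The images of the blocks give embedded suborbifolds with totally geodesic, finite-volume boundary (mirrors of copies of $A$). Each is isometric to $X_1=P_1$ or $X_2=P_2$, regarded as suborbifolds of $\H^n/\Gamma_{P_i}$ cut along the reflecting walls $F$, resp.\ $F',F''$. So $O_m$ decomposes into one $X_1$-piece and $m$ $X_2$-pieces.

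\textbf{Incommensurability.} Suppose $L_m$ and $L_{m'}$ with $m\neq m'$ are commensurable, so there is a common finite cover $O$ of $O_m$ and $O_{m'}$. Pull back the decompositions. Each lift of an $X_1$-piece of $O_m$ is a suborbifold $U$ finitely covering $X_1$, and each lift from $O_{m'}$ gives a similar collection. Since $P_1$ and $P_2$ have distinct ambient groups, \cref{lem:gl-comm} shows that an $X_1$-type lift from one decomposition and an $X_2$-type lift from the other have intersection with empty interior. The two decompositions of $O$ both cover $O$ up to measure zero. Hence the union of the $X_1$-type lifts is the same set, up to measure zero, in both decompositions, and likewise for the $X_2$-type lifts.

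Comparing volumes, let $d,d'$ be the degrees of $O\to O_m$ and $O\to O_{m'}$. Then
\[
d\,\vol(P_1)=d'\vol(P_1), \qquad d\,m\vol(P_2)=d'm'\vol(P_2).
\]
The first equation gives $d=d'$, and the second then forces $m=m'$, a contradiction.

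\textbf{Main obstacle.} The delicate step is verifying the hypotheses of \cref{lem:gl-comm} in the noncompact setting:
\begin{itemize}
\item Each $U_i$ must be an \emph{embedded} suborbifold with totally geodesic boundary that finitely and isometrically covers $X_i$; the cusps of the blocks may cause trouble here.
\item The cusps of the $X_i$ must not destroy the Zariski-density argument.
\end{itemize}
Since the boundary pieces $A$ have finite volume and $n\geq 4$, the lemma applies as stated. The remaining work is to check that the preimage of a block in $O$ is a union of such $U$'s, by lifting via the reflection groups and taking components. I would handle this by working with torsion-free finite-index subgroups throughout, as in the proof of \cref{lem:gl-comm}.
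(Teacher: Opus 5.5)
Your proposal is correct and is essentially the paper's argument. The paper proves this by running the first part of the proof of \cref{prop:gluing-comm} for $L_m$ and $L_{m'}$, exactly as in \cite{bdr-incomm-4-5}: lift the block decompositions to a common finite cover, use \cref{lem:gl-comm} to see that lifts of $P_1$-pieces and $P_2$-pieces from the two decompositions meet only in sets with empty interior, then compare volumes. Your global comparison $d\vol(P_1)=d'\vol(P_1)$, followed by the total-volume identity, slightly streamlines the paper's block-by-block comparison of boundary volumes and works equally well.
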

    \begin{proposition}\label{prop:bdr-noncompact}
        Let $P_1, P_2$ be a pair of polytopes satisfying \cref{ass:we-can-now}. Given a word $\alpha$, let $X_\alpha$ be obtained by gluing the pieces $P_{\alpha_i}$ along their common boundary components. For any $m\geq 1$ and any words $\alpha, \beta\in\{1, 2\}^m$ such that $\Gamma_{X_\alpha}$ is commensurable with $\Gamma_{X_\beta}$, we have that $\beta$ or $\bar{\beta}$ is a subsequence of $(\alpha, \bar{\alpha})$. Here $\bar\alpha$ and $\bar\beta$ are mirrors of $\alpha$ and $\beta$ resp.
    \end{proposition}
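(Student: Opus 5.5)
We follow the strategy of Proposition~4.2 in \cite{bdr-incomm-4-5}, which goes through the Gelander--Levit decomposition argument; in the noncompact setting, \cref{lem:gl-comm} replaces its compact analogue.

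\emph{Setup.} Write $X_\alpha=P_{\alpha_1}\cup\cdots\cup P_{\alpha_m}$. Consecutive blocks are glued along copies of the admissible facet $A$. Because the two admissible facets of each $P_i$ are nonadjacent, every gluing wall $A$ is a totally geodesic hypersurface of finite volume in the orbifold $O_\alpha=\H^n/\Gamma_{X_\alpha}$, and it cuts $O_\alpha$ into pieces. Each piece $Y_j$ is obtained from the block $P_{\alpha_j}$ by doubling across its non-gluing facets (or by the corresponding reflection orbifold with the two $A$-walls as boundary). Concretely, $Y_j$ is a suborbifold with totally geodesic boundary $\partial Y_j$, a finite-volume orbifold modelled on $A$. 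It embeds, up to finite cover, in the quasi-arithmetic orbifold $M_{\alpha_j}$ obtained by doubling $P_{\alpha_j}$ along its admissible facets. This orbifold is quasi-arithmetic and has the same ambient group as $P_{\alpha_j}$, since its reflection group is commensurable with a finite-index subgroup of $\Gamma_{P_{\alpha_j}}$.

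\emph{Main argument.} Suppose $\Gamma_{X_\alpha}$ and $\Gamma_{X_\beta}$ are commensurable. Since $X_\alpha$ and $X_\beta$ are nonarithmetic, we work in a common finite cover $O$ of $O_\alpha$ and $O_\beta$; Margulis gives a common finite quotient via the discreteness of the commensurator. Lift both decompositions to $O$. We obtain two decompositions of $O$ into pieces $U$, each finitely covering some $Y_j$ of type $1$ or $2$ (pieces from $\alpha$ and pieces from $\beta$). Take a piece $U$ of $\beta$-type $i$. It meets pieces of the $\alpha$-decomposition in sets with nonempty interior. Applying \cref{lem:gl-comm} to the pairs (type 1, type 2) of distinct ambient groups, which is allowed since $n\geq4$ and the boundaries have finite volume, shows that $U$ can only overlap in open sets with $\alpha$-pieces of the same type. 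Hence $U$ lies inside a union of same-type $\alpha$-pieces and its boundary walls are contained in the union of $\alpha$-walls. We then argue, as in \cite{bdr-incomm-4-5}, that the walls of the two decompositions coincide. A $\beta$-wall lying in the interior of a same-type region of $\alpha$ would separate two $\beta$-pieces of the same type; the mixed-type walls must match, since on one side the type is 1 and on the other it is 2. It follows that the pattern of types met along a path in $O$ crossing the walls transversally reads the letters of $\beta$ and those of $\alpha$ compatibly.

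\emph{Combinatorial conclusion.} The dual graph of the wall decomposition of $O$ covers the path graph of $\alpha$, which is a segment with its endpoints folded by reflection, and likewise for $\beta$. Lifting a path realizing $\beta$ to $O$ and projecting it to the $\alpha$-graph gives a walk that reads $\beta$ along consecutive letters of $\alpha$. It can reverse direction only at the endpoint reflections, and these produce $\bar\alpha$. Hence $\beta$ or $\bar\beta$ appears as a subsequence of $(\alpha,\bar\alpha)$.

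\emph{Expected obstacle.} The delicate step is the noncompact version of the wall-matching argument. Cusps of $O$ may meet the walls, so the pieces are no longer compact. We must ensure that \cref{lem:gl-comm} applies, which requires the boundary to have finite volume and $n\geq4$, and that no wall merely touches another at a cusp. The nonadjacency hypothesis in \cref{ass:we-can-now}, under which facets may only meet at ideal points, is what should make each wall a properly embedded finite-volume totally geodesic suborbifold. I expect to spend most of the effort verifying this embeddedness near the cusps.
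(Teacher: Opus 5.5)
\textbf{There is a genuine gap in your main argument and combinatorial conclusion.}

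\cref{lem:gl-comm} only says that lifts of pieces with \emph{different} labels meet in sets with empty interior. In the common cover $O$ it therefore forces only the walls separating a $1$-region from a $2$-region to coincide. It says nothing about walls inside a maximal constant-label block. So your assertion that the walls of the two decompositions coincide is unjustified, and so is the claim that the boundary walls of a $\beta$-piece lie in the union of the $\alpha$-walls.

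Without this, your path-reading argument only sees the sequence of labels of maximal blocks, which for a two-letter alphabet simply alternates. It cannot distinguish $\alpha=(1,2,1,2)$ from $\beta=(1,1,2,2)$, although neither $\beta$ nor $\bar\beta$ occurs in the cyclic word $(\alpha,\bar\alpha)=(1,2,1,2,2,1,2,1)$.

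The missing ingredient is the length comparison of \cref{prop:gluing-comm}. A component $C$ of the lift of a maximal block covers an $s$-piece block with degree $d_s$ and a $t$-piece block with degree $d_t$. Comparing $\vol(\partial C)$ and $\vol(C)$ then forces $d_s=d_t$ and $s=t$.

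Working directly in $\H^n/\Gamma_{X_\alpha}$ makes this harder still. The two end pieces carry only one wall, because the other copy of $A$ is a mirror of $X_\alpha$. So an end block of length $s$ can legitimately match an interior block of length $2s$, as happens for $\alpha=(1,2,2,1)$, $\beta=(2,1,1,2)$. Likewise, your claim that the projected walk can reverse direction only at the endpoint reflections is unproved. The map from the dual graph of $O$ to the path graph of $\alpha$ is not a graph covering: a lift of an interior piece can have several boundary components over the same wall. Hence backtracking at interior vertices is not excluded.

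\textbf{How the paper avoids this.} It first passes to a finite-index subgroup, following Bogachev--Douba--Raimbault. Take the surjective morphism $\varphi$ on $\Gamma_{X_\alpha}$ that kills the reflections in every facet except the two free end copies of $A$. It is well defined for two reasons. These facets are admissible, so their relations with adjacent facets have order $2$. They are also nonadjacent, so there is no relation between the two end reflections even when their closures meet at a cusp; this is exactly the cusp subtlety the paper points out.

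The kernel of $\varphi$ is the orbifold group of a closed garland in which every piece has exactly two boundary walls. It is obtained by gluing $X_\alpha$ to its mirror image along both end facets, so its cyclic word is $(\alpha,\bar\alpha)$; this is where $\bar\alpha$ genuinely comes from. Then \cref{prop:gluing-comm} applies: block matching via \cref{lem:gl-comm}, length matching via volumes, then Raimbault's cyclic-word combinatorics. It shows that $(\beta,\bar\beta)$ is a cyclic shift or reversal of $(\alpha,\bar\alpha)$, which gives the statement.

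The obstacle you anticipated is not the crux. Admissibility and nonadjacency already make the walls disjoint totally geodesic hyperplanes upstairs, with finite-volume quotients, so \cref{lem:gl-comm} applies with no extra work near the cusps.
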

    \begin{remark}
        It is immediate from the above proposition that if a pair of $n$-dimensional polytopes satisfying \cref{ass:we-can-now} exists, then the commensurability classes grow at least exponentially wrt volumes.
    \end{remark}
    We first state a general version of the propositions above. Let $A$ be a finite-volume hyperbolic $(n-1)$-orbifold. Assume either that $A$ is compact and $n\geq 3$, or that $A$ is noncompact of finite volume and $n\geq 4$. Let $P_1, \ldots, P_r$ be finite-volume quasi-arithmetic hyperbolic $n$-orbifolds with totally geodesic boundary. Assume that for each $a$ there are two distinguished embedded totally geodesic boundary facets $A_a^{\pm}$, both isometric to $A$. We allow the closures of $A_a^-$ and $A_a^+$ to meet in cusps, but require that $A_a^-\cap A_a^+$ has empty interior and that, in the hyperbolic orbifold $P_a$, they are distinct boundary facets. Assume moreover that gluing $A_a^+$ to $A_b^-$ by the prescribed isometry produces a complete finite-volume hyperbolic orbifold. Assume that each $P_a$ is contained as a suborbifold with totally geodesic boundary in a finite-volume quasi-arithmetic hyperbolic $n$-orbifold $\Sigma_a$, that the trace field of each $\Sigma_a$ is $k$, and that the ambient groups of the $\Sigma_a$ are pairwise not $k$-isogenous. For a word $\alpha\in\{1, \ldots, r\}^{\Z/m\Z}$, let $X_\alpha$ be obtained by gluing $A_{\alpha_i}^+\subseteq P_{\alpha_i}$ to $A_{\alpha_{i+1}}^-\subseteq P_{\alpha_{i+1}}$ for every $i\in\Z/m\Z$.
    \begin{proposition}[finite-volume orbifold version of Proposition~2.1 in \cite{raimbault-manifold-comm}]\label{prop:gluing-comm}
        Fix two words $\alpha, \beta$ If $X_\alpha$ and $X_\beta$ are commensurable, then there exists $p\in\mathbb Z/m\mathbb Z$ such that either $\beta_j=\alpha_{j+p}$ for all $j$, or $\beta_j=\alpha_{p-j}$ for all $j$. Equivalently, $\beta$ or its mirror $\bar\beta$ lies in the $\mathbb Z/m\mathbb Z$-shift orbit of $\alpha$.
    \end{proposition}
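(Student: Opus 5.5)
We follow Raimbault's proof of Proposition~2.1 in \cite{raimbault-manifold-comm}, with \cref{lem:gl-comm} replacing its arithmetic input. Suppose $X_\alpha$ and $X_\beta$ are commensurable. Let $O$ be a common finite cover, and replace it by a torsion-free finite-index cover if convenient. Let $\pi_\alpha:O\to X_\alpha$ and $\pi_\beta:O\to X_\beta$ be the covering maps. The totally geodesic hypersurfaces $\mathcal A_\alpha=\pi_\alpha^{-1}(\bigcup_i A_{\alpha_i}^{\pm})$ cut $O$ into pieces. Each such piece $U$ is an embedded suborbifold with totally geodesic boundary, and it finitely covers some block $P_{\alpha_i}$, which sits inside $\Sigma_{\alpha_i}$. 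The same holds for $\mathcal A_\beta$. Since $A$ has finite volume and is compact or satisfies $n\ge 4$, every hypothesis of \cref{lem:gl-comm} holds for these pieces.

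\textbf{Step 1: the two decompositions agree.} Take a piece $U$ of the $\alpha$-decomposition covering $P_a$ and a piece $V$ of the $\beta$-decomposition covering $P_b$. If $a\neq b$, the ambient groups of $\Sigma_a$ and $\Sigma_b$ are not $k$-isogenous, so \cref{lem:gl-comm} shows that $U\cap V$ has empty interior. The pieces of each decomposition cover $O$, so $U$ is a union of closures of the sets $U\cap V$, and therefore $U$ is covered by the $\beta$-pieces of the same type $a$. We then need to show that $U$ actually coincides with a single $\beta$-piece, i.e.\ that $\mathcal A_\alpha=\mathcal A_\beta$. This is the main obstacle. If an $\alpha$-wall $W$ entered the interior of a $\beta$-piece $V$, then $V$ would contain points of both sides of $W$ near $W$. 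I would first try to apply \cref{lem:gl-comm} to $V$ and the piece across $W$, assuming that piece has a different type; this gives a contradiction. When both sides have the same type $a$, I would use that the $P_a$ are distinct blocks only through their ambient groups, and argue with the monodromy instead. The region $V\cup U$ near $W$ still lies in a single $\beta$-piece covering $P_a\subseteq\Sigma_a$. Because of the finite-volume hypotheses, this monodromy is Zariski-dense, which is the geometric part of the proof of \cref{lem:gl-comm}. A wall of $\Sigma_a$ lying in the interior of $P_a$ then contradicts the fact that $A_a^\pm$ are boundary facets of $P_a$. More precisely, I would compare the developing images: they force $W$ to map to an interior hypersurface of $P_b$ that is totally geodesic with boundary type $A$. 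Volume and counting arguments then rule this out. In the cusped case, the key point is that walls meet only in cusps, with empty interior of intersection, so generic points of $W$ are still away from other walls. This is where the noncompact hypothesis $A_a^-\cap A_a^+$ having empty interior is used.

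\textbf{Step 2: read off the cyclic word.} Once $\mathcal A_\alpha=\mathcal A_\beta=:\mathcal A$, form the dual graph of the decomposition of $O$. Its vertices are the pieces, labelled by type, and its edges are the walls. Each piece has boundary components mapping onto the $+$ and $-$ facets of its block, so the labelled graph covers both the labelled cycle of $\alpha$ and the labelled cycle of $\beta$, each of length $m$. Pick any cycle in the graph that projects to the $\alpha$-cycle with degree one. Its image under the $\beta$ projection is a closed walk, and the local structure (one $+$ and one $-$ neighbor) forces it to be a reduced walk, hence the $\beta$-cycle traversed in one direction, and the labels match. This gives a shift $p$ with $\beta_j=\alpha_{j+p}$, or the reversed relation $\beta_j=\alpha_{p-j}$ when the orientation is reversed, which proves the claim.
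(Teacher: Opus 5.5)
Your Step 1 depends on proving $\mathcal A_\alpha=\mathcal A_\beta$, and this cannot be done. \cref{lem:gl-comm} only separates pieces whose labels differ, because its whole input is that the ambient groups are not $k$-isogenous. So when both sides of an $\alpha$-wall $W$ carry the same label $a$, Zariski density of the monodromy gives nothing. The contradiction you propose is not one: a totally geodesic hypersurface crossing the interior of a cover of $P_a$ is perfectly compatible with $A_a^\pm$ being boundary facets. The ``volume and counting arguments'' are never specified.

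In fact the claim is false in general. Cut a closed arithmetic manifold along a nonseparating embedded totally geodesic hypersurface $S$ to get $Q$. Let $P_a=Q\cup Q$ be two consecutive copies glued as in the cyclic covers dual to $S$; it sits inside the threefold cyclic cover, so the hypotheses hold with $A=S$. For the constant word $\alpha=(a,\dots,a)$, $X_\alpha$ is the $2m$-fold cyclic cover, and the deck transformation shifting by one copy of $Q$ is an isometry of $X_\alpha$. Take $O=X_\alpha$, $\pi_\alpha=\mathrm{id}$ and $\pi_\beta$ this shift (so $\beta=\alpha$). Then the two wall systems have no wall in common.

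Step 2 has a similar unproved point. A piece can have several boundary components over each of $A_a^\pm$, and nothing forces $\pi_\alpha$ and $\pi_\beta$ to sort them into $+$ and $-$ compatibly. So the $\beta$-image of your cycle may backtrack.

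The missing idea, which is the paper's route, is never to locate individual walls. Group consecutive equal letters into maximal constant-label blocks. A component $C$ of the lift to $O$ of a maximal $\alpha$-block of label $a$ meets only lifts of label-$a$ $\beta$-pieces in nonempty interior. Hence $C$ lies in a component $C'$ of the lift of a maximal $\beta$-block of label $a$. Applying the lemma at the two ends, where the neighbouring labels differ from $a$, shows $C'$ cannot extend past $C$, so $C=C'$; this is the part of your Step 1 that does work.

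The interior walls are then bypassed by volume. Suppose $C$ covers the $\alpha$-block of $s$ copies of $P_a$ with degree $d_s$ and the $\beta$-block of $t$ copies with degree $d_t$. Then $\partial C$ covers the two end walls of each block, so $2d_s\vol(A)=2d_t\vol(A)$, i.e.\ $d_s=d_t$. Then $d_s\,s\vol(P_a)=d_t\,t\vol(P_a)$ gives $s=t$. Hence the cyclic decompositions of $\alpha$ and $\beta$ into maximal blocks agree in labels and lengths, and the adjacency of these blocks in $O$ (Raimbault's combinatorial argument) yields the shift or the reversal.
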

    \begin{proof}
        Let $O$ be a common finite orbifold cover of $X_\alpha$ and $X_\beta$, with covering maps $\pi:O\to X_\alpha$ and $\pi':O\to X_\beta$. We first claim that the lifts of pieces of different labels have intersection with empty interior. Indeed, if $U_a$ is a connected component of the lift of a $P_a$-piece and $U_b$ is a connected component of the lift of a $P_b$-piece with $a\neq b$, then $U_a$ and $U_b$ admit finite isometric orbifold covers to $P_a$ and $P_b$, respectively. Since $P_a$ and $P_b$ lie inside quasi-arithmetic orbifolds $\Sigma_a$ and $\Sigma_b$ whose ambient groups are not $k$-isogenous, the preceding lemma implies that $U_a\cap U_b$ has empty interior. Now let $\alpha_i\neq\alpha_{i+1}=\cdots=\alpha_j\neq\alpha_{j+1}$ be a maximal block of constant label in $\alpha$, and let $C$ be a connected component of $\pi^{-1}(P_{\alpha_{i+1}, \ldots, \alpha_j})$. By the claim, $C$ can meet only lifts of $\beta$-pieces with the same label in nonempty interior, so $C$ is contained in a connected component $C'$ of the lift of some maximal block $P_{\beta_{i'+1}, \ldots, \beta_{j'}}$ with the same label. The same claim applied at the two ends of the block forbids $C'$ from extending past $C$, hence $C=C'$. Thus maximal constant-label blocks in $\alpha$ and $\beta$ correspond. Their lengths are equal: if a common finite orbifold cover covers a block of $s$ copies of $P_a$ with degree $d_s$ and a block of $t$ copies of $P_a$ with degree $d_t$, then comparing boundary volumes gives $2d_s\operatorname{vol}(A)=2d_t\operatorname{vol}(A)$, so $d_s=d_t$, and comparing total volumes gives $d_s s\operatorname{vol}(P_a)=d_t t\operatorname{vol}(P_a)$, hence $s=t$. Therefore the cyclic decompositions of $\alpha$ and $\beta$ into maximal constant-label blocks agree in label and length. The same combinatorial argument as in \cite{raimbault-manifold-comm} then implies that the two cyclic words agree up to cyclic shift or reversal, namely $\beta_j=\alpha_{j+p}$ for all $j$, or $\beta_j=\alpha_{p-j}$ for all $j$.
    \end{proof}
    \begin{proof}[Proof of \cref{prop:bdr-linear}]
        This is evident if one modifies the first part of the proof of \cref{prop:gluing-comm} for two different $L_m$ and $L_{m'}$, just like the original proof in \cite{bdr-incomm-4-5}.
    \end{proof}
    \begin{proof}[Proof of \cref{prop:bdr-noncompact}]
        The proof is identical to that of the original statement, replacing its use of Lemma 3.2 by \cref{lem:gl-comm} and Proposition 2.1 in \cite{raimbault-manifold-comm} by \cref{prop:gluing-comm}. One subtlety to note is that the surjective morphism $\varphi$ in the original proof is still well-defined when the two isometric gluing facets meet at the cusp: the order of generator product is infinity, and thus making it a homomorphism. 
    \end{proof}

    \section{The four-dimensional one-cusped families}
    \label{sec:dimension-four}
    In this section, we construct infinite sequences of nonarithmetic one-cusped Coxeter polytopes in $\mathbb H^4$ and prove that they are pairwise incommensurable. The construction has two ingredients. The first is a one-cusped Coxeter polytope, which provides the unique cusp neighborhood. The second consists of compact Coxeter blocks, which can be attached along admissible facets without changing the maximal cusp, making \cref{lem:decreasing-density} applicable.
    \subsection{Gluing nonarithmetic polytopes}
    We begin with the four-dimensional Coxeter simplex $\Delta_{16,5}$ (see \cref{figure:delta_165}).  
    \begin{figure}[H]
        \centering
        \includegraphics[
          width=0.25\textwidth
        ]{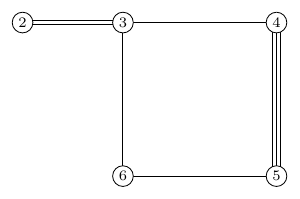}
        \caption{The simplex $\Delta_{16, 5}$.}
        \label{figure:delta_165}
    \end{figure}

    Label its facets by $F_2, F_3, F_4, F_5, F_6$. With respect to this ordering, its Gram matrix is
    \[G(\Delta_{16,5})=
    \begin{pmatrix}
        1 & -\frac{\sqrt2}{2} & 0 & 0 & 0\\
        -\frac{\sqrt2}{2} & 1 & -\frac12 & 0 & -\frac12\\
        0 & -\frac12 & 1 & -\frac{1+\sqrt5}{4} & 0\\
        0 & 0 & -\frac{1+\sqrt5}{4} & 1 & -\frac12\\
        0 & -\frac12 & 0 & -\frac12 & 1
    \end{pmatrix}.\]
    This matrix has signature $(4,1,0)$, and hence determines a hyperbolic Coxeter simplex in $\mathbb H^4$. A direct computation of the signatures of its $4\times 4$ principal submatrices gives
    \[\begin{array}{c|c|c}
        J & \operatorname{sign}(G_J) & \text{type of the corresponding vertex}\\
        \hline
        \{3,4,5,6\} & (3,1,0) & \text{hyperideal }\\
        \{2,4,5,6\} & (4,0,0) & \text{ordinary}\\
        \{2,3,5,6\} & (4,0,0) & \text{ordinary}\\
        \{2,3,4,6\} & (3,0,1) & \text{ideal}\\
        \{2,3,4,5\} & (3,1,0) & \text{hyperideal}
    \end{array}\]

    We truncate the two hyperideal
    vertices
    \[F_3\cap F_4\cap F_5\cap F_6\quad\text{and}\quad F_2\cap F_3\cap F_4\cap F_5.\]
    By Proposition 4.4 in \cite{vinberg-1985} (excision lemma), each new truncation facet is admissible. Notice the resulting Coxeter polytope is precisely $P_{16,5}$ in the list of \cite{mz-4d-7f}, which we display in \cref{figure:p416}. Denote by $P_1^4$ the new polytope for convenience. We label the two new truncation facets by $F_7$ and $F_1$, respectively, so that $F_1$ is orthogonal to all $F_i$ for $i=2, 3, 4, 5$ and $F_7$ is orthogonal to $F_j$ for $j=3, 4, 5, 6$. The polytope $P^4_1$ has exactly one ideal vertex, namely
    \[v=F_2\cap F_3\cap F_4\cap F_6.\]
    All other vertices are ordinary. Hence $P^4_1$ is a finite-volume one-cusped Coxeter polytope in $\mathbb{H}^4$. The facet $F_1$ is admissible. The Coxeter structure induced on $F_1$ is the hyperbolic Coxeter $3$-simplex $[4, 3, 5]$ since the Coxeter diagram on the adjacent facets $F_2, F_3, F_4, F_5$ is the chain with labels $4, 3, 5$.

    With respect to the ordering $F_1,\ldots,F_7$, the Gram matrix of
    $P^4_1$ is
    \[G(P^4_1)=
        \begin{pmatrix}
        1 & 0 & 0 & 0 & 0 & -a & -b\\
        0 & 1 & -\frac{\sqrt2}{2} & 0 & 0 & 0 & -c\\
        0 & -\frac{\sqrt2}{2} & 1 & -\frac12 & 0 & -\frac12 & 0\\
        0 & 0 & -\frac12 & 1 & -\frac{1+\sqrt5}{4} & 0 & 0\\
        0 & 0 & 0 & -\frac{1+\sqrt5}{4} & 1 & -\frac12 & 0\\
        -a & 0 & -\frac12 & 0 & -\frac12 & 1 & 0\\
        -b & -c & 0 & 0 & 0 & 0 & 1
        \end{pmatrix}\]
    By construction, $P^4_1$ is a hyperbolic Coxeter polytope in $\mathbb{H}^4$, so the signature of its Gram matrix must be $(4,1,2)$. Then every $6\times 6$ minors must be degenerate, which helps us solve for $a, b, c$.
    \[a=\frac12\sqrt{11+5\sqrt5},\quad b=3\sqrt{\frac{6+\sqrt5}{31}},\quad c=\sqrt{\frac{17+8\sqrt5}{31}}.\]
    Due to the entry $-c$, $K(P)$ is not totally real, and $P_1^4$ is not quasi-arithmetic. We conclude
    \begin{lemma}
        The polytope $P_1^4$ with an admissible facet $[4,3,5]$ is finite-volume, non-arithmetic, and one-cusped.
    \end{lemma}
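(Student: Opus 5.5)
The proof combines the preceding computations with Vinberg's theorems. There are four things to establish: finite volume, one cusp, the admissible facet $[4,3,5]$, and nonarithmeticity.

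\emph{Finite volume and admissibility.} We start from the Gram matrix $G(\Delta_{16,5})$, which has signature $(4,1,0)$. By \cref{thm:signature} it determines a simplex. The signature table of its $4\times4$ principal minors identifies two hyperideal vertices, one ideal vertex $\{2,3,4,6\}$, and two ordinary vertices. Truncation uses the excision lemma (Proposition 4.4 in \cite{vinberg-1985}). For each hyperideal vertex, the polar hyperplane is orthogonal to the four facets through that vertex, so it cuts off a neighbourhood of the vertex. The new facets $F_1,F_7$ therefore meet their neighbours at angle $\pi/2$ and are admissible by definition. I would verify finite volume with \cref{Vinberg:thm4.2} applied to the $7\times7$ Gram matrix $G(P_1^4)$ with the solved values of $a,b,c$, whose signature is $(4,1,2)$. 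The check has four parts.
\begin{enumerate}
\item Each old ordinary vertex keeps an elliptic rank-$4$ subdiagram.
\item Each truncated vertex is replaced by a compact simplicial facet, whose vertices are new ordinary vertices. Their subdiagrams are elliptic: for $F_1$, any three of $\{F_2,\dots,F_5\}$ together with $F_1$.
\item The ideal vertex $v=F_2\cap F_3\cap F_4\cap F_6$ has a parabolic subdiagram of rank $3$. It should be $\widetilde{C}_3$ or $\widetilde B_3$-like, read off from the labels $4,3$ and the two $3$'s.
\item Every edge has exactly two endpoints.
\end{enumerate}
Alternatively, one can note that the truncated simplex has finite volume because the only points at infinity of the original simplex were the hyperideal vertices, now removed, and the ideal vertex $v$.

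\emph{One cusp.} The vertex $v$ is the only ideal vertex. Since $P_1^4$ is a Coxeter polytope, the orbifold $\H^4/\Gamma$ has cusps in bijection with the $\Gamma$-orbits of ideal vertices of the fundamental domain. Hence it is one-cusped.

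\emph{The facet $[4,3,5]$.} Because $F_1$ is orthogonal to $F_2,\dots,F_5$, the dihedral angles of the facet $F_1$ between its faces $F_1\cap F_i$ equal the dihedral angles $\angle(F_i,F_j)$. The induced diagram is therefore the chain $F_2\!-\!F_3\!-\!F_4\!-\!F_5$ with labels $4,3,5$. This is a Lannér diagram, so the facet is the compact simplex $[4,3,5]$.

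\emph{Nonarithmeticity.} I view this as the main point requiring care. The entry $c=\sqrt{(17+8\sqrt5)/31}$ lies in $K(P)$. Under the Galois conjugation $\sqrt5\mapsto-\sqrt5$, the radicand becomes $(17-8\sqrt5)/31<0$, since $8\sqrt5\approx17.9$. Thus $K(P)$ has a non-real embedding, and condition (i) of Vinberg's criterion fails. Two subtleties need attention:
\begin{itemize}
\item The entries are determined only up to the sign normalisation of the normals, but $c$ itself is a Gram entry, so $K(P)$ genuinely contains $c$.
\item Any embedding of $K(P)$ restricts to an embedding of $\Q(\sqrt5)\subset K(P)$ sending $\sqrt5$ to $-\sqrt5$, because $\sqrt5=4(\text{entry})-1$ lies in $K(P)$. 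Such an embedding then sends $c^2$ to a negative number, so $c$ maps to a non-real number.
\end{itemize}
Hence $P_1^4$ is not quasi-arithmetic, and in particular not arithmetic.
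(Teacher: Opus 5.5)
Your proposal is correct and follows the paper's route: truncate the two hyperideal vertices of $\Delta_{16,5}$ by Vinberg's excision lemma, observe that the unique ideal vertex is $F_2\cap F_3\cap F_4\cap F_6$ (its diagram is $\widetilde B_3$), read off the chain $[4,3,5]$ on the facet $F_1$, which is orthogonal to its neighbours, and deduce nonarithmeticity from the entry $-c$; your Galois check $(17-8\sqrt5)/31<0$ supplies the detail the paper leaves implicit. Two cosmetic slips: you need \emph{some} embedding of $K(P)$ extending $\sqrt5\mapsto-\sqrt5$ (such an embedding exists because embeddings extend along finite extensions), not every embedding, and the correct relation is $\sqrt5=-4g_{45}-1$ rather than $4g_{45}-1$.
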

    We will use three compact Coxeter $4$-polytopes from the list of \cite{mz-4d-7f}.
    \[P_{16,2},\quad P_{16,3},\quad P_{16,4}.\]
    They are all obtained by truncating two hyperideal vertices of Coxeter simplices. This gives compact Coxeter $4$-polytopes with seven facets. In each case, the facet labelled $F_1$ is isometric, as a Coxeter polytope, to the Coxeter $3$-simplex $[4,3,5]$.

    Moreover, in each of these three compact polytopes, the facet $F_7$ is admissible. Doubling along $F_7$ therefore produces another compact Coxeter $4$-polytope. We denote these doubled polytopes by $P^4_i=D_{F_7}(P_{16, i})$ for $i=2, 3, 4$. Each $P^4_i$, $i=2,3,4$, contains two distinguished admissible facets, coming from the two copies of $F_1$. Both of these facets are isometric to $[4,3,5]$. We write them as $A_i^- , A_i^+ \subseteq P^4_i$. For the one-cusped block $P^4_1$, we simply write $A_1=F_1(P^4_1)$. Thus $A_1, A_i^-, A_i^+$ are all isometric Coxeter $3$-simplices of type
    $[4,3,5]$, and all of them are admissible facets.
    \begin{figure}[H]
        \centering
        \includegraphics[
          width=\textwidth
        ]{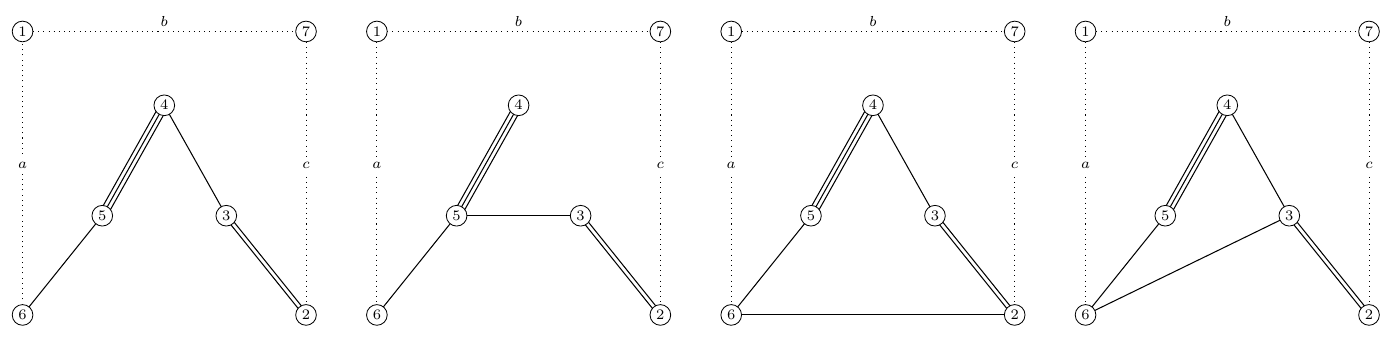}
        \caption{From left to right: $P_{16, 2}$, $P_{16, 3}$, $P_{16, 4}$, and $P_{16, 5}$.}
        \label{figure:p416}
    \end{figure}
    
    Let $\alpha=(\alpha_1,\ldots,\alpha_k)$ be a word in the alphabet $\{2,3,4\}$. We define
    \[X^4_{\alpha}=X^4_{\alpha_1,\ldots,\alpha_k}\]
    to be the Coxeter polytope obtained by gluing
    \[P^4_1, P^4_{\alpha_1}, P^4_{\alpha_2}, \dots, P^4_{\alpha_k}\]
    We glue $A_1$ to $A_{\alpha_1}^{-}$, and $A_{\alpha_j}^{+}$ to $A_{\alpha_{j+1}}^{-}$ for $j=1,\ldots,k-1$ via isometries. Each $X^4_\alpha$ is again a Coxeter polytope. They are also nonarithmetic. Indeed, the facets $F_2$ and $F_7$ of the original block $P^4_1$ survive in $X^4_\alpha$, and their Gram matrix entry is still $-c$. Therefore, $K(X^4_\alpha)$ is not totally real.

    \subsection{Commensurability of the glued polytopes}
    \label{subsec:4d-cusp-radius}
    We now determine the maximal cusp of $P^4_1$.  We first record a
    standard radius formula.
    \begin{lemma}
    \label{lem:cusp-radius}
        Let $P\subseteq \H^n$ be a Coxeter polytope, and let
        \[v=F_{i_1}\cap\cdots\cap F_{i_n}\]
        be an ideal vertex represented in the Lorentz model by a light-like vector $u$. Let $H_j=e_j^\perp$ be a facet hyperplane not passing through $v$. In the upper half-space model in which $v$ is sent to $\infty$, the hyperplane $H_j$ is represented by a Euclidean hemisphere $S_j$. Its Euclidean radius is
        \[r(S_j)=\frac{1}{|\langle u,e_j\rangle|}. \]
    \end{lemma}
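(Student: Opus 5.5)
The formula only makes sense once the scale of the light-like vector $u$ is fixed, since $u$ is determined by $v$ only up to a positive multiple. I will work in explicit coordinates in which $u$ has a canonical normalization relative to the upper half-space model, and then compute directly. Concretely, I take $\R^{n,1}$ with the form $\langle x,x\rangle=x_1^2+\cdots+x_{n-1}^2+x_n^2-x_{n+1}^2$. A point $(y,t)\in\R^{n-1}\times\R_{>0}$ of the upper half-space corresponds to the hyperboloid point
\[X(y,t)=\frac1t\Bigl(y,\ \tfrac{|y|^2+t^2-1}{2},\ \tfrac{|y|^2+t^2+1}{2}\Bigr).\]
One checks that $\langle X,X\rangle=-1$ and that this is the standard isometry between the two models.

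\textbf{Step 1: locate and normalize $u$.} As $t\to\infty$, the points $X(y,t)$ tend projectively to the light-like direction $u=(0,\ldots,0,1,1)$, so $u$ represents the point $\infty$. A direct computation gives $\langle X(y,t),u\rangle=-1/t$. Hence the horosphere $\{\langle X,u\rangle=-1\}$ is the Euclidean horizontal plane $t=1$. This is the normalization of $u$ implicit in the lemma: after an isometry sending $v$ to $\infty$, we scale $u$ so that the horosphere $\langle\cdot,u\rangle=-1$ is the plane $t=1$. With this choice of $u$, the claimed formula is an identity.

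\textbf{Step 2: compute the image of $H_j$.} Write the unit normal as $e_j=(w,\alpha,\beta)$ with $|w|^2+\alpha^2-\beta^2=1$. Then $\langle u,e_j\rangle=\alpha-\beta$, and this is nonzero because $H_j$ does not pass through $v$. The condition $\langle X(y,t),e_j\rangle=0$ becomes, after multiplying by $t$,
\[\tfrac{\alpha-\beta}{2}\bigl(|y|^2+t^2\bigr)+y\cdot w-\tfrac{\alpha+\beta}{2}=0.\]
Dividing by $(\alpha-\beta)/2$ and completing the square gives
\[\Bigl|y+\tfrac{w}{\alpha-\beta}\Bigr|^2+t^2=\frac{|w|^2+(\alpha-\beta)(\alpha+\beta)}{(\alpha-\beta)^2}=\frac{1}{(\alpha-\beta)^2}.\]
The last equality uses the normalization $|w|^2+\alpha^2-\beta^2=1$. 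So $S_j$ is a Euclidean hemisphere of radius $1/|\alpha-\beta|=1/|\langle u,e_j\rangle|$.

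The only real subtlety is Step 1, namely making the normalization of $u$ explicit; everything else is completing a square. If the paper instead normalizes $u$ by a different horosphere, both sides rescale compatibly, because Euclidean dilations centered on the boundary are isometries. The formula then holds with that scale, which is all that the later cusp-density computations require.
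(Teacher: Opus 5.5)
Your proof is correct and is essentially the paper's argument written in explicit coordinates: your chart $X(y,t)$ is exactly the paper's parametrization $(z,t)\mapsto\frac{t^2+\|z\|^2}{2t}u+\frac1t u'+\frac1t z$ with $u=(0,\ldots,0,1,1)$ and $u'=(0,\ldots,0,-\tfrac12,\tfrac12)$, and you complete the square using $\langle e_j,e_j\rangle=1$ just as the paper does. The paper leaves implicit that the scale of $u$ is fixed by making the horosphere $\langle\cdot,u\rangle=-1$ the plane $t=1$, since it builds the chart from $u$ itself; your proof states this explicitly.
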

    \begin{proof}
        Choose another light-like vector $u'\in \mathbb R^{n,1}$ such that $\langle u,u'\rangle=-1$, and set $E=(\mathbb R u\oplus \mathbb R u')^\perp$. Then the Lorentzian form is positive definite on $E$. Every vector $x\in\mathbb R^{n,1}$ can be written uniquely as
        \[x=\lambda u+\mu u'+w,
        \quad w\in E. \]
       
        The corresponding upper half-space coordinates are
        \[x\longmapsto \left(\frac{w}{\mu},\frac1\mu\right). \]
        The inverse map is
        \[(z,t)\longmapsto \frac{t^2+\|z\|^2}{2t}u+\frac{1}{t}u'+  \frac{1}{t}z.\]    
        Write $e_j=\lambda u+\mu u'+w$. Then $\langle u,e_j\rangle=-\mu$. The hyperplane equation $\langle x,e_j\rangle=0$ becomes
        \[\left\|z-\frac{w}{\mu}\right\|^2+t^2=\frac{1}{\mu^2}.\]
        Thus, the image of $H_j$ is a Euclidean hemisphere of radius
        $1/|\mu|$, and hence
        \[ r(S_j)=\frac{1}{|\langle u,e_j\rangle|}.\]
    \end{proof}
    We apply this lemma to the unique ideal vertex
    \[v=F_2\cap F_3\cap F_4\cap F_6\]
    of $P^4_1$.  In the Lorentz model, this ideal vertex is represented by the light-like vector
    \[u=\sqrt2 e_2+2e_3+e_4+e_6.\]
    Indeed,
    \[\langle u,e_2\rangle=\langle u,e_3\rangle=\langle u,e_4\rangle=\langle u,e_6\rangle=0,\]
    and a direct computation gives $\langle u,u\rangle=0$.
    
    The facets not passing through $v$ are $F_1, F_5, F_7$. Therefore, in the upper half-space model with $v$ sent to $\infty$, these three facets are represented by Euclidean hemispheres $S_1, S_5, S_7$. Using
    \cref{lem:cusp-radius}, we compute
    \[\langle u,e_5\rangle=-\frac{3+\sqrt5}{4}, \quad
    \langle u,e_7\rangle=-\sqrt{\frac{34+16\sqrt5}{31}}, \quad
    \langle u,e_1\rangle=-\sqrt{\frac{11+5\sqrt5}{4}}.\]
    Numerically,
    \[\left|\langle u,e_5\rangle\right|\approx 1.31, \quad
    \left|\langle u,e_7\rangle\right|\approx 1.50, \quad
    \left|\langle u,e_1\rangle\right|\approx 2.35.\]
    which means $r(S_5)>r(S_7)>r(S_1)$.
    \begin{proposition}
    \label{prop:4d-maximal-cusp}
        The maximal cusp neighborhood of $P^4_1$ has its closure tangent to the facet $F_5$. In particular, its closure does not touch the gluing facet $F_1$.
    \end{proposition}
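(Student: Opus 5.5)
Our plan is to work in the upper half-space model with the ideal vertex $v=F_2\cap F_3\cap F_4\cap F_6$ sent to $\infty$, normalized by the light-like vector $u=\sqrt2 e_2+2e_3+e_4+e_6$ as in \cref{lem:cusp-radius}. In this picture, $F_2,F_3,F_4,F_6$ become vertical Euclidean hyperplanes bounding a Euclidean Coxeter prism over the cusp cross-section (of parabolic type $\widetilde{C}_3$, read off from the subdiagram on $\{2,3,4,6\}$). The remaining facets $F_1,F_5,F_7$ are hemispheres $S_1,S_5,S_7$ of radii $1/|\langle u,e_j\rangle|$. Horoballs centered at $\infty$ are the regions $\{t>h\}$. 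Since $\Gamma_v$ preserves every such horoball, the cusp neighborhood $\{t>h\}\cap P_1^4$ is embedded exactly when the horoball is disjoint from all of its images under the reflections in the walls not through $v$, together with their $\Gamma_v$-conjugates.

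\textbf{Step 1: the maximal horoball is $\{t>r(S_5)\}$.} Reflection in a hemisphere $S_j$ of radius $r_j$ sends $\{t>h\}$ to a horoball based at a boundary point, of Euclidean diameter $r_j^2/h$. This image is disjoint from $\{t>h\}$ if and only if $r_j^2/h\le h$, i.e.\ $h\ge r_j$, with tangency exactly at $h=r_j$. Every wall of the cusp chamber $\mathcal C_1$ is a $\Gamma_v$-translate of $S_1,S_5,S_7$, and $\Gamma_v$ acts by Euclidean isometries preserving radii. Any other element of $\Gamma_1$ not fixing $\infty$ sends $B$ to a horoball whose height is bounded by the height after the first wall crossing. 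Hence the optimal height is $h_{\max}=\max_j r(S_j)=r(S_5)$, using the computed inequality $r(S_5)>r(S_7)>r(S_1)$.

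One point must be checked: the relevant hemispheres should actually bound $P_1^4$ over the cusp cross-section, i.e.\ the top of $S_5$ should lie over the Euclidean prism, or at least $S_5$ should intersect the chamber at height $r(S_5)$. If it did not, tangency would occur only against a translate. In either case the translate still has radius $r(S_5)$, so the maximal horoball $B=\{t>r(S_5)\}$ is tangent to some $\Gamma_v$-translate $\gamma F_5$. Reflecting through that wall, the tangency descends to $F_5$ in the quotient. Thus $\overline{N(P_1^4)}$ touches $F_5$.

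\textbf{Step 2: the closure misses $F_1$.} Every point of $S_1$, and of each $\Gamma_v$-translate of it, has height at most $r(S_1)\approx 1/2.35<r(S_5)\approx 1/1.31$. Hence $\overline B=\{t\ge r(S_5)\}$ is disjoint from every translate of $F_1$, so $\overline{N(P_1^4)}\cap F_1=\varnothing$.

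The main obstacle is Step 1's justification that only first-generation reflections matter. The claim is that the maximal embedded horoball is determined by the largest wall radius. This is the standard Ford-domain picture: the cusp chamber $\mathcal C_1$ is a fundamental domain for $\Gamma_v\backslash\Gamma_1$-cosets near $\infty$, lying above all hemispheres. If finer care is needed, we would argue via convexity of $\mathcal C_1$. Any overlapping translate $gB$ with $g\notin\Gamma_v$ would have to meet $\mathcal C_1\cap B$. But $\mathcal C_1\cap\{t>h\}$, with $h\ge\max r_j$, lies above all walls and within a single fundamental region, so it meets no other lift.
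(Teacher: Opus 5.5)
Your proposal is correct and follows the paper's route. You send $v$ to $\infty$, read off the hemisphere radii $r(S_j)=1/|\langle u,e_j\rangle|$ from the cusp-radius lemma, and conclude that the horoball $\{t\ge r(S_5)\}$ is tangent to the largest hemisphere $S_5$ and stays strictly above $S_1$ and all its $\Gamma_v$-translates; you also supply the inversion computation and the cusp-chamber convexity argument that the paper leaves implicit.

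Two small points, neither affecting the argument. The cusp type on $\{F_2,F_3,F_4,F_6\}$ is $\widetilde{B}_3$ (node $3$ has three neighbours, one of them via a weight-$4$ edge), not $\widetilde{C}_3$. And your worry about where the top of $S_5$ sits is unnecessary: since $S_5$ is the strictly largest wall, its top point lies strictly above every other wall of the cusp chamber, so it already lies on $F_5$ itself.
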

    \begin{proof}
        In the upper half-space model with the ideal vertex $v$ sent to $\infty$, cusp neighborhoods based at $v$ are given by horoballs of the form
        \[B_t=\{(z,s)\in \mathbb R^3\times\mathbb R_{>0}: s\geq t\}.\]
        The maximal cusp is obtained when such a horoball first becomes tangent to one of the hemispheres corresponding to the facets not passing through $v$. Since $r(S_5)>r(S_7)>r(S_1)$, the first tangency occurs at $S_5$. Therefore, the maximal cusp is tangent to $F_5$, and it does not touch $F_1$.
    \end{proof}
    We are now ready to prove the second main theorem in dimension four.
    \begin{theorem}
    \label{thm:4d-infinite-incommensurable}
        Given a sequence of letters $\{\alpha_i\}$ in the alphabet $\{2, 3, 4\}$, define $\alpha^i=(\alpha_1,\dots,\alpha_i)$. The sequence
        \[X^4_{\alpha^1},X^4_{\alpha^2},X^4_{\alpha^3},\ldots\]
        gives infinitely many pairwise incommensurable nonarithmetic one-cusped Coxeter polytopes of finite volume in $\H^4$.
    \end{theorem}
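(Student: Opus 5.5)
The plan is to separate the $X^4_{\alpha^i}$ by cusp density, using \cref{cor:cd-incomm} and \cref{lem:decreasing-density}. First I record the basic properties of each $X^4_{\alpha^i}$. It is a Coxeter polytope of finite volume, since every gluing is along admissible facets isometric to $[4,3,5]$. It is nonarithmetic, because the entry $-c$ between $F_2$ and $F_7$ of the block $P^4_1$ survives, so $K(X^4_{\alpha^i})$ is not totally real. It is one-cusped: the blocks $P^4_2,P^4_3,P^4_4$ are doubles of the compact polytopes $P_{16,j}$ and hence compact, and the gluing facet $[4,3,5]$ is a compact simplex, so the only ideal vertex is $v$ coming from $P^4_1$. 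Since $X^4_{\alpha^i}$ is nonarithmetic, the corresponding orbifolds fall under \cref{cor:cd-incomm}.

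Next I compute the cusp density, by induction on $i$. Write $X^4_{\alpha^i}=P^4_1\cup_\varphi Y_i$, where $Y_i$ is the compact Coxeter polytope obtained by gluing $P^4_{\alpha_1},\dots,P^4_{\alpha_i}$ in a chain. Its free facet $A^-_{\alpha_1}$ is admissible, because gluing along other facets does not change the facets adjacent to $A^-_{\alpha_1}$ or the right angles they make with it. By \cref{prop:4d-maximal-cusp}, the closure of the maximal cusp neighborhood $N(P^4_1)$ is tangent to $F_5$ and misses the gluing facet $F_1$. So \cref{lem:decreasing-density} applies and gives
\[\vol N(X^4_{\alpha^i})=\vol N(P^4_1)\quad\text{for every } i.\]
The one point to check is that the hypothesis ``closure does not meet $F_1$'' is genuinely satisfied and not just a radius comparison. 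Because $r(S_1)<r(S_5)$, the horoball tangent to $S_5$ sits at a height strictly greater than the top of every $\Gamma_v$-translate of $S_1$. Hence $\overline{B}$ is disjoint from the hemispheres of $F_1$, which is exactly what the lemma requires.

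Finally, every block has positive volume, so $\vol(X^4_{\alpha^i})=\vol(P^4_1)+\sum_{j\le i}\vol(P^4_{\alpha_j})$ is strictly increasing in $i$. Since the numerator $\vol N(P^4_1)$ is fixed,
\[\cd(X^4_{\alpha^i})=\frac{\vol N(P^4_1)}{\vol(X^4_{\alpha^i})}\]
is strictly decreasing in $i$, and the densities are pairwise distinct. By \cref{cor:cd-incomm}, two distinct members of the sequence are therefore incommensurable, and the sequence gives infinitely many pairwise incommensurable nonarithmetic one-cusped Coxeter polytopes of finite volume in $\H^4$.
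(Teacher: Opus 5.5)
Your proof is correct and follows the paper's approach: the maximal-cusp volume stays constant by \cref{lem:decreasing-density}, the total volume strictly increases, and \cref{cor:cd-incomm} then gives pairwise incommensurability. The only difference is that you apply the lemma once, to $P^4_1\cup_\varphi Y_i$ with the whole compact chain $Y_i$, so its hypothesis is exactly \cref{prop:4d-maximal-cusp}; the paper instead applies it inductively to $X^4_{\alpha^i}\cup P^4_{\alpha_{i+1}}$, which tacitly needs the maximal cusp of each $X^4_{\alpha^i}$ to miss the new free facet $A^+_{\alpha_i}$ (this holds, because the lemma's proof shows that cusp stays inside $P^4_1$).
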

    \begin{proof}
        By construction, each $X^4_{\alpha^i}$ is a finite-volume one-cusped Coxeter polytope, $P_{\alpha_{i+1}}$ is compact, and they have isometric admissible facets. By \cref{lem:decreasing-density}, we see that for all $i>0$,
        \[\cd(X^4_{\alpha^i})>\cd(X^4_{\alpha^i}\cup P_{\alpha_{i+1}})=\cd(X^4_{\alpha^{i+1}}).\]
        By \cref{cor:cd-incomm}, the $X^4_{\alpha^i}$ are pair-wise incommensurable.
    \end{proof}   
    
    \section{The five-dimensional one-cusped families}
    \label{sec:five-dimensional-family}
    In light of the complete list of five-dimensional eight-facet Coxeter polytopes, we can now give the five-dimensional analogue of the previous section. The arguments are identical to the four-dimensional case, so we will only write down the five-dimensional data. Let the capped block to be $P^5_1=P_{39,3}$ in our list, \cref{figure:p57}. We reproduce the labeled diagrams here.
    \begin{figure}[H]
        \centering
        \includegraphics[
          width=0.5\textwidth
        ]{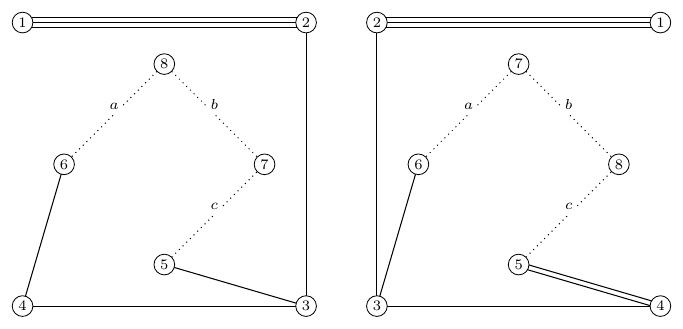}
        \caption{$P_{39, 2}$ (left) and $P_{39, 3}$ (right).}
        \label{figure:p539}
    \end{figure}
    
    \begin{lemma}
    \label{lem:P393-seed}
        The polytope $P_1^5$ is a finite-volume nonarithmetic one-cusped Coxeter polytope in $\H^5$ with an admissible facet $F_8=\left[5,3,3^{1,1}\right]$.
    \end{lemma}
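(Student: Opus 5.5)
The plan mirrors the four-dimensional treatment of $P_1^4$, carried out on the Gram matrix of $P_{39,3}$ from the appendix list. Since $P_{39,3}$ appears in our census of \cref{thm:58list}, it is already a finite-volume hyperbolic Coxeter $5$-polytope. Its Gram matrix has signature $(5,1,2)$ and satisfies the conditions of \cref{Vinberg:thm4.2}. So finiteness of volume is inherited from the classification, and the remaining three claims are checked directly.

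\textbf{One cusp.} The combinatorial type $P_{39}$ in \cref{table:combinatoricSelction58} is simple, so every vertex is given by a $5$-element facet bracket. For each of the $14$ brackets $J$, I will compute the signature of the principal $5\times 5$ submatrix $G_J$. By \cref{Vinberg:thm3.1}, $G_J$ positive definite means an ordinary vertex, and $G_J$ parabolic of rank $4$ means an ideal vertex. The expectation is that exactly one bracket yields a connected rank-four parabolic subdiagram (a $\Delta^4$-type cusp in \cref{table:rank3Euclidean}) and all others are elliptic. I will then write down the light-like vector $u$ representing the cusp as the positive kernel vector of $G_J$, exactly as $u=\sqrt2e_2+2e_3+e_4+e_6$ was found for $P^4_1$. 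This vector is needed later for the cusp-radius computation via \cref{lem:cusp-radius}.

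\textbf{Admissible facet.} Read off from the diagram which facets meet $F_8$, namely those sharing a vertex bracket with $8$, i.e.\ with label $7$ in the $0$-based labeling. Check that none of them is joined to $F_8$ by a solid edge, so all are orthogonal to it. The induced Coxeter structure on $F_8$ then has as its diagram the subdiagram on its adjacent facets. Since $F_8$ is combinatorially a $4$-simplex (the brackets containing $7$ involve exactly five other facets), that subdiagram should be the $5$-node quasi-Lannér diagram $[5,3,3^{1,1}]$: a $5$-edge followed by a $3$-chain ending in a fork. One must also confirm that the non-adjacent facet of $F_8$ is dotted (ultraparallel) to it, so that it contributes nothing to the facet.

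\textbf{Nonarithmeticity.} As in the four-dimensional case, I expect the solved dotted-edge entries $-\cosh\rho_{ij}$ of $G(P_{39,3})$ to involve square roots such as $\sqrt{(\cdots)/(\cdots)}$ over $\Q(\sqrt5)$. The goal is to exhibit an entry, or better a cyclic product, having a Galois conjugate that is non-real, so that $K(P)$ is not totally real and Vinberg's criterion fails. If every entry happens to lie in a totally real field, the fallback is to show that condition (ii) fails: find a Galois embedding $\s$ nontrivial on $k(P)$ such that $\s(G)$ has a negative eigenvalue. For $\Q(\sqrt5)$, this amounts to substituting $\sqrt5\mapsto-\sqrt5$ and checking the sign of a suitable principal minor.

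I expect this last step to be the main obstacle. It requires exact closed forms for the dotted entries, obtained by solving the vanishing of the $7\times7$ minors symbolically, followed by an honest field-theoretic check rather than a numerical one. The cusp and admissibility checks, by contrast, are finite signature computations.
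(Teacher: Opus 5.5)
Your plan is correct and follows the paper's proof. From $G(P_{39,3})$, the only ideal vertex is $F_2\cap\cdots\cap F_6$ (a $\widetilde B_4$ cusp). $F_8$ is orthogonal to $F_1,F_2,F_3,F_4,F_6$ and ultraparallel to $F_5,F_7$, so it is admissible with induced diagram $[5,3,3^{1,1}]$. The entry $g_{67}=-a$ with $a=\sqrt{(2+\sqrt5)/2}$ has the non-real conjugate $\sqrt{(2-\sqrt5)/2}$, so $K(P_1^5)$ is not totally real.

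There are a few minor slips. First, $[5,3,3^{1,1}]$ is Lann\'er (a compact simplex), not quasi-Lann\'er. Second, $F_8$ has two nonadjacent facets, not one. Third, the labels in \cref{table:combinatoricSelction58} are related to the diagram labels by a permutation, not a shift by one, so you should read adjacency off the Gram matrix rather than the table.
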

    \begin{proof}
        The Gram matrix of $P_{39,3}$ is 
        \[G(P_1^5)=\begin{pmatrix}
            1 & -\frac{1+\sqrt{5}}{4} & 0 & 0 & 0 & 0 & 0 & 0 \\
            -\frac{1+\sqrt{5}}{4} & 1 & -\frac{1}{2} & 0 & 0 & 0 & 0 & 0 \\
            0 & -\frac{1}{2} & 1 & -\frac{1}{2} & 0 & -\frac{1}{2} & 0 & 0 \\
            0 & 0 & -\frac{1}{2} & 1 & -\frac{\sqrt{2}}{2} & 0 & 0 & 0 \\
            0 & 0 & 0 & -\frac{\sqrt{2}}{2} & 1 & 0 & 0 & -c \\
            0 & 0 & -\frac{1}{2} & 0 & 0 & 1 & -a & 0 \\
            0 & 0 & 0 & 0 & 0 & -a & 1 & -b \\
            0 & 0 & 0 & 0 & -c & 0 & -b & 1
        \end{pmatrix}\]
        where
        \[a=\sqrt{\frac{2+\sqrt{5}}{2}}, \quad b=\frac{\sqrt{5}}{2}, \quad c=\frac{1}{2} \sqrt{2+\sqrt{5}}=\frac{a}{\sqrt{2}}.\]
        The polytope has exactly one ideal vertex,
        \[v=F_2\cap F_3\cap F_4\cap F_5\cap F_6,\]
        and all other vertices are ordinary. Thus $P_1^5$ is finite-volume and one-cusped. It remains to show the polytope is nonarithmetic. The entry $g_{67}=-a$ makes $K(P_1^5)$ not totally real. We take the admissible gluing facet to be $A_1=F_8(P_1^5)$, so $g_{67}$ will survive. The induced Coxeter structure on $A_1$ is $\left[5,3,3^{1,1}\right]$ (see the diagram below for clarity).
        \begin{figure}[H]
            \centering
            \includegraphics[
              width=0.2\textwidth
            ]{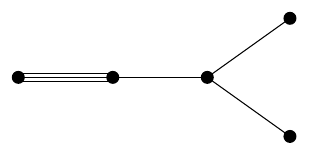}
            \label{figure:p53311}
        \end{figure}
        This completes the proof.
    \end{proof}
    
    Now let $P_{39, 2}$ be the only five-dimensional compact building blocks. Denote by $P_2^5$ the double of $P_{39, 2}$ along its admissible doubling facet $F_7$. The double $P^5_2$ contains two distinguished admissible facets $F_8^\pm$ both isometric to $\left[5,3,3^{1,1}\right]$. Define $X^5_{n}$ for a constant word $(2,2,\dots,2)$ of length $n$ in the same manner as the last section.
    \begin{lemma}
    \label{lem:P393-cusp-not-gluing}
        The closure of the maximal cusp of $P^5_1$ does not intersect the gluing facet $A_1=F_8$.
    \end{lemma}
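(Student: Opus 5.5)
The plan is to repeat the argument of \cref{subsec:4d-cusp-radius} using the five-dimensional data of \cref{lem:P393-seed}. In the upper half-space model we send the unique ideal vertex $v=F_2\cap F_3\cap F_4\cap F_5\cap F_6$ to $\infty$. The facets not through $v$ are $F_1$, $F_7$, $F_8$, and they become Euclidean hemispheres $S_1, S_7, S_8$. Their radii are given by \cref{lem:cusp-radius} once we have a light-like vector $u$ representing $v$. The goal is to show that $r(S_8)$ is strictly smaller than the largest of the three radii, and then to argue as in \cref{prop:4d-maximal-cusp}.

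\textbf{Step 1 (the vector $u$).} We look for $u=x_2e_2+x_3e_3+x_4e_4+x_5e_5+x_6e_6$ with $\langle u,e_i\rangle=0$ for $i=2,\dots,6$, using the rows of $G(P^5_1)$. The rows for $e_2$ and $e_6$ give $x_2=x_6=x_3/2$. The row for $e_3$ then gives $x_4=x_3$, and the row for $e_4$ gives $x_5=x_3/\sqrt2$. The row for $e_5$ is then satisfied automatically, which is expected since the subdiagram on $F_2,\dots,F_6$ is parabolic of rank $4$. Normalizing $x_3=2$ gives
\[u=e_2+2e_3+2e_4+\sqrt2\,e_5+e_6 .\]
This $u$ lies in the radical of the parabolic submatrix, so $\langle u,u\rangle=0$.

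\textbf{Step 2 (the radii).} The only nonzero entries of $G(P^5_1)$ coupling $F_1,F_7,F_8$ to $F_2,\dots,F_6$ are $g_{12}$, $g_{67}$ and $g_{58}$. Hence
\[\langle u,e_1\rangle=-\tfrac{1+\sqrt5}{4}\approx-0.81,\quad \langle u,e_7\rangle=-a\approx-1.46,\quad \langle u,e_8\rangle=-\sqrt2\,c=-a\approx-1.46 .\]
By \cref{lem:cusp-radius} this gives $r(S_1)=\tfrac{4}{1+\sqrt5}\approx1.24$ and $r(S_7)=r(S_8)=1/a\approx0.69$.

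\textbf{Step 3 (conclusion).} We argue as in \cref{prop:4d-maximal-cusp}. The horoballs $B_t=\{s\geq t\}$ grow as $t$ decreases. Every $\Gamma_v$-translate of a wall is a hemisphere of the same radius as the wall it comes from. So the first tangency with a wall of the cusp chamber happens at height $t_0=r(S_1)$, on $S_1$ or one of its translates. Thus $N(P^5_1)=B_{t_0}\cap P^5_1$ and its closure is tangent to $F_1$. Every point of $S_8$ has height at most $r(S_8)<t_0$, so $\overline{B_{t_0}}\cap S_8=\varnothing$, and in particular the closure of the maximal cusp does not meet $F_8$.

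The main obstacle is making sure Step 3 is legitimate. One has to check that maximality of the cusp in the orbifold sense is governed exactly by the first tangency of the horoball with a reflecting wall not through $v$. This is the same reasoning as in the maximality part of the proof of \cref{lem:decreasing-density}. The only numerical point to verify is the strict inequality $1/a<4/(1+\sqrt5)$, which is equivalent to $a>(1+\sqrt5)/4$ and clearly holds.
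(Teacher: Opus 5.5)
Your proposal is correct and follows the paper's proof essentially verbatim. You use the same light-like vector $u=e_2+2e_3+2e_4+\sqrt2\,e_5+e_6$ and the same three inner products, giving $r(S_1)=4/(1+\sqrt5)>r(S_7)=r(S_8)=1/a$, and then the same first-tangency conclusion. Your Step 3 is in fact slightly more explicit than the paper's wording: you note that the closed horoball $\{s\geq t_0\}$ with $t_0\geq r(S_1)>r(S_8)$ misses the whole hemisphere $S_8\supseteq F_8$, which is exactly the claim, whereas the paper only says the maximal cusp is not tangent to $F_8$.
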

    \begin{proof}
        The ideal vertex
        \[v=F_2\cap F_3\cap F_4\cap F_5\cap F_6\]
        is represented by the light-like vector
        \[u=e_2+2e_3+2e_4+\sqrt2 e_5+e_6.\]
        The facets not containing $v$ are $F_1,F_7,F_8$. By the cusp-radius formula from \cref{lem:cusp-radius},
        \[r(S_i)=\frac{1}{|\langle u,e_i\rangle|}.\]
        A direct computation gives
        \[\langle u,e_1\rangle=-\frac{1+\sqrt5}{4}, \quad
                \langle u,e_7\rangle=-\sqrt{\frac{2+\sqrt5}{2}}, \quad
                \langle u,e_8\rangle=-\sqrt{\frac{2+\sqrt5}{2}}.\]
        Therefore, $r(S_1)>r(S_7)=r(S_8)$. Hence, the maximal cusp is tangent to $F_1$, and in particular, it is not tangent
        to the gluing facet $F_8$.
    \end{proof}
    \begin{theorem}
    \label{thm:five-dimensional-family}
        Given a constant word $(2,2,\dots,2)$ of length $n$, the sequence
        \[X^5_{1},X^5_{2},X^5_{3},\ldots\]
        gives infinitely many pairwise incommensurable nonarithmetic one-cusped Coxeter polytopes of finite volume in $\H^5$.
    \end{theorem}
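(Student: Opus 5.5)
The plan is to copy the proof of \cref{thm:4d-infinite-incommensurable}. We replace the one-cusped block $P^4_1$ by $P^5_1=P_{39,3}$ and the compact doubled blocks by the single block $P^5_2=D_{F_7}(P_{39,2})$. The polytope $X^5_n$ is $P^5_1$ with $n$ copies of $P^5_2$ glued in a chain: $A_1=F_8(P^5_1)$ is glued to $F_8^-$ of the first copy, and $F_8^+$ of the $j$-th copy is glued to $F_8^-$ of the $(j+1)$-st copy. Since every gluing facet is admissible and isometric to $[5,3,3^{1,1}]$, each $X^5_n$ is again a Coxeter polytope of finite volume.

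The first step is to record the basic properties of $X^5_n$. It is one-cusped: $P^5_2$ is compact, the gluing facets are compact simplices, and the only ideal vertex $v$ comes from $P^5_1$ (\cref{lem:P393-seed}). It is nonarithmetic: the facets $F_6,F_7$ of $P^5_1$ do not contain the gluing facet $F_8$, so they survive as facets of $X^5_n$. Their Gram entry $-a=-\sqrt{(2+\sqrt5)/2}$ therefore still appears in $G(X^5_n)$. The Galois conjugate of $2+\sqrt5$ under $\sqrt5\mapsto-\sqrt5$ is negative, so $K(X^5_n)$ is not totally real.

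The second step is the density comparison. We have $X^5_{n+1}=X^5_n\cup_\varphi P^5_2$, glued along the free copy of $F_8^+$ in the last block. We apply \cref{lem:decreasing-density} with $P_1=X^5_n$ and $P_2=P^5_2$. The hypothesis to check is that the closure of the maximal cusp neighborhood of $X^5_n$ misses the new gluing facet. We prove by induction the stronger fact that the maximal cusp neighborhood of $X^5_n$ equals $N(P^5_1)$, viewed inside the $P^5_1$ block.

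For $n=0$ this is \cref{lem:P393-cusp-not-gluing}: the maximal horoball is tangent to $F_1$ and its closure misses $F_8$. The proof of \cref{lem:decreasing-density} shows that the horoball $B$ is unchanged by the gluing, so $N(X^5_n)=B\cap P^5_1$. This set lies in the $P^5_1$ block, and its closure meets only $P^5_1$. The new gluing facet lies in the last $P^5_2$ block, so it is disjoint from that closure, and the induction goes through. The lemma then gives $\cd(X^5_n)>\cd(X^5_{n+1})$ for all $n$, so the cusp densities are strictly decreasing and pairwise distinct. \cref{cor:cd-incomm} then shows the $X^5_n$ are pairwise incommensurable.

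The main obstacle is the inductive claim that the new gluing facet stays away from the closure of the cusp neighborhood. The direct observation is that the horoball $B$ is convex and does not enter any attached compact block, by the argument in \cref{lem:decreasing-density}. Hence it touches only $P^5_1$ and its walls through $v$, and the facets of later blocks are disjoint from $\overline B$. A further subtlety is that in \cref{lem:P393-cusp-not-gluing} the radii satisfy $r(S_7)=r(S_8)$. This does not matter here, because the relevant comparison is $r(S_1)>r(S_8)$, which is strict.
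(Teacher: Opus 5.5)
Your proposal is correct and is essentially the paper's proof, which simply transports the four-dimensional argument: nonarithmeticity from the surviving entry $g_{67}=-a$, then \cref{lem:decreasing-density} for strictly decreasing cusp densities, then \cref{cor:cd-incomm}. Your induction, showing that the maximal horoball $B$ stays inside the $P^5_1$ block so that the hypothesis of \cref{lem:decreasing-density} holds at every stage and not only for $X^5_0=P^5_1$, makes explicit a step the paper leaves implicit.
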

    \begin{proof}
        The same proof applies verbatim, mutatis mutandis.
    \end{proof}

    \section{Bogachev-Douba-Raimbault's construction in the noncompact setting}\label{sec:bdr-noncompact}
    In this section, we will construct Makarov's garland (in fact, tiara) in the noncompact setting for dimensions $4, 5, 6, 7$, and $9$. That is, we will prove the following as a direct consequence of \cref{prop:bdr-noncompact}:
    \begin{corollary}
        For $n=4, 5, 6, 7, 9$, there are infinitely many commensurability classes of noncompact Coxeter polytopes with finite volume. Furthermore, there exists a constant $c(n)>1$ such that for all $V\gg 1$, the number of incommensurable classes of noncompact $n$-dimensional polytopes with volume less than $V$ is at least $c(n)^V$.
    \end{corollary}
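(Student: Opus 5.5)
For each $n\in\{4,5,6,7,9\}$ I would exhibit a pair $P_1,P_2$ of noncompact finite-volume quasi-arithmetic Coxeter $n$-polytopes satisfying \cref{ass:we-can-now}, and then invoke \cref{prop:bdr-noncompact} together with a volume count.

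\textbf{Step 1: the blocks.} The candidates are polytopes of truncated-simplex type from the existing lists. For $n=4$ I would use the census \cite{mz-4d-7f}. For $n=5$ I would use our list in \cref{sec:appendix}. For $n=6,7,9$ I would use Tumarkin's $n+2$ list \cite{tumarkin-nplustwo} and Roberts' list \cite{roberts2016}. I want polytopes with either two orthogonal truncation facets isometric to the same $(n-1)$-simplex $A$, or one truncation facet together with an admissible doubling facet. In the second case, doubling along the doubling facet produces two nonadjacent admissible copies of $A$. Two facets of the same block are allowed to meet only at a cusp, which is permitted by the assumption. For each candidate I would compute the Gram matrix and check three things:
\begin{enumerate}[\normalfont(i)]
\item $K(P)$ is totally real;
\item every nontrivial Galois conjugate of $G(P)$ is positive semidefinite, so $P$ is quasi-arithmetic;
\item both members of the pair have the same Vinberg field $k$.
\end{enumerate}
Doubling does not change the reflection group's commensurability class, so these properties pass to the doubled block.

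\textbf{Step 2: distinct ambient groups.} By \cref{lem:qsiarith-inv}, it suffices to show that the Vinberg forms $Q(P_1)$ and $Q(P_2)$ are not similar over $k$. Following the remark after \cref{lem:qsiarith-inv}:
\begin{enumerate}[\normalfont(i)]
\item in odd $n+1$ variables (that is, $n$ even), scaling changes the determinant only by a square, so I would compare the determinants modulo $(k^\times)^2$;
\item in even $n+1$ variables, I would compare discriminants and then Hasse invariants at a suitable prime of $k$, keeping track of how scaling by $\lambda$ changes the Hasse invariant.
\end{enumerate}
I expect this to be the main obstacle. I would first diagonalize the Vinberg forms explicitly using the basis vectors $v_{i_1,\dots,i_m}$, and search for a local place where the invariants visibly disagree. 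If no such place appears for a given pair, I would swap in another candidate pair from the lists. Dimensions $8$ and $10$ and above are presumably exactly where no such pair exists in the available lists.

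\textbf{Step 3: growth count.} With \cref{ass:we-can-now} verified, each word $\alpha\in\{1,2\}^m$ gives a noncompact finite-volume Coxeter polytope $X_\alpha$ whose volume is at most $m\max(\vol P_1,\vol P_2)$. By \cref{prop:bdr-noncompact}, each commensurability class among these contains at most $4m$ words, coming from shifts and mirrors. This gives at least $2^m/(4m)$ classes with volume below $V=m\max\vol P_i$. Hence, for $V\gg1$, the number of classes with volume less than $V$ is at least $c(n)^V$, where one can take any $1<c(n)<2^{1/\max\vol P_i}$.
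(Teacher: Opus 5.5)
Your architecture matches the paper's: exhibit, for each $n\in\{4,5,6,7,9\}$, noncompact quasi-arithmetic blocks satisfying \cref{ass:we-can-now}, apply \cref{prop:bdr-noncompact}, and count words. Your Step 3 is correct and more explicit than the paper: at most $4m$ words per class, hence at least $2^m/(4m)$ classes of volume at most $m\max_i\vol P_i$. But once \cref{prop:bdr-noncompact} is available, the entire content of the corollary is the \emph{existence} of such pairs. Your Steps 1--2 only describe a search with a fallback (``swap in another candidate pair''); nothing is exhibited or verified.

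The paper names the blocks: $P_{4,1},P_{9,26}$ from \cite{mz-4d-7f} with marked facets $[6,3,3]$; $P_{28,1},P_{28,28},P_{28,43}$ from the new list; and $(n+3)$-facet polytopes from \cite{roberts2016} for $n=6,7,9$. It records their cusps, their Vinberg forms over $k=\Q$, and the invariants that separate them. Your search space is also misdirected. By \cref{rmk:issue}, lists with $\le n+2$ facets such as \cite{tumarkin-nplustwo} do not yield noncompact pairs for \cref{ass:we-can-now}. Moreover, every block the paper uses has a non-simple ideal vertex, so none is of truncated-simplex type.

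There is also a concrete error in Step 2(i): the parity is reversed. In $d=n+1$ variables, $\det(\lambda Q)=\lambda^{d}\det Q$. So for $d$ odd ($n$ even), scaling multiplies the determinant class by $\lambda$. Taking $\lambda=\det Q_1/\det Q_2$ always matches determinants, so a difference of determinants proves nothing there. The determinant class is a similarity invariant only for $d$ even ($n$ odd), which is how the paper separates the blocks for $n=5,7,9$.

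For $n$ even the right test is as follows. If $\det Q_1\equiv\det Q_2$, any similarity $Q_1\cong\lambda Q_2$ forces $\lambda^{d}\in(k^\times)^2$, hence $\lambda\in(k^\times)^2$. So similarity reduces to isometry, and one compares Hasse invariants. That is exactly the paper's argument for $n=4$: $Q_1^4=\langle1,-2,3,1,2\rangle$ and $Q_2^4=\langle1,2,2,1,-3\rangle$ have the same determinant but Hasse invariants $-1$ and $1$ at $p=3$. It does the same for $n=6$ with Hasse invariants at $p=2$. Your criterion would reject precisely the pairs that work in dimensions $4$ and $6$, and could accept pairs whose Vinberg forms are actually similar.
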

    \begin{proof}
         For $n=4, 5, 6, 7, 9$, we will display some pairs of Coxeter diagrams of $n$-dimensional cusped polytopes $P_{1}^n, P_2^n,\dots$ with finite volume from \cite{mz-4d-7f}, our new list, and the list in \cite{roberts2016}. Two admissible facets $E_i^{n, \pm}$ will be marked on each $P_i^n$. A simple observation of the Coxeter diagrams and direct computer computations will show that (1) all marked facets $E_{i}^{n, \pm}$ are isometric for each $n$, and (2) $P_{1}^n, P_2^n,\dots$\footnote[2]{Not to be confused with $P_i^4, P_i^5$ in the previous two sections.} are quasi-arithmetic with the same trace field $k_n=\Q$, but have ambient groups that are pairwise not $\Q$-isogenous. Thus $P_1^n, P_2^n,\dots$ form pairs of polytopes satisfying \cref{ass:we-can-now}. The corollary below is then a direct consequence of \cref{prop:bdr-noncompact}. For the reader's convenience, we will also record the cusps, Vinberg forms, and commensurability invariants used to determine the result.
    \end{proof}
    \begin{remark}\label{rmk:issue}
        Although candidates for $n=4, 5, 6$ satisfying \cref{ass:we-can} already exist in \cite{tumarkin-nplustwo}, currently known lists of Coxeter $n$-dimensional polytopes with numbers of facets $\leq n+2$ are insufficient for one to find appropriate pairs of noncompact polytopes satisfying \cref{ass:we-can-now}. Thus, to prove the exponential growth rate of incommensurability classes via Makarov's garland in this setting, one must utilize polytopes with more facets from previously known lists, the recent list of Ma-Zheng, and our new list of five-dimensional polytopes with eight facets.
    \end{remark}
    \begin{remark}
        The exponential growth in dimensions higher than four was not achieved in \cite{bdr-incomm-4-5}, as the number of compact (quasi-)arithmetic polytopes is extremely limited.
    \end{remark}
    \subsection{Dimension four}
    The following pair of Coxeter polytopes serves as the gluing blocks for dimension four, satisfying \cref{ass:we-can-now}. They are polytopes $P_{4, 1}$ and $P_{9, 26}$ from \cite{mz-4d-7f}. The crimson-colored nodes are $E_i^{4,\pm}$. They all have type $[6, 3, 3]$. The two polytopes are both arithmetic.
    \begin{figure}[H]
        \centering
        \includegraphics[
          width=0.6\textwidth
        ]{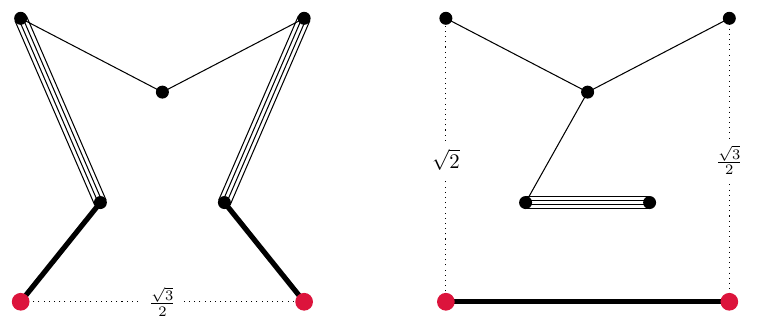}
        \caption{One pair of 4-polytopes $P_1^4$, $P_2^4$ satisfying \cref{ass:we-can-now}.}
        \label{figure:bdr4-exp}
    \end{figure}
    The polytope $P_1^4$ has two cusps
    \[F_1\cap F_3\cap F_4\cap F_5\cap F_7,\quad F_2\cap F_3\cap F_5\cap F_6\cap F_7,\]
    and the polytope $P_2^4$ has one cusp
    \[F_1\cap F_2\cap F_4\cap F_5\cap F_7.\]
    They both have finite volumes. The Vinberg forms of these polytopes are then
    \[Q_1^4=\langle 1,-2,3,1,2\rangle, \quad Q_2^4=\langle1,2,2,1,-3\rangle\]
    Despite having a determinant ratio of $1$, we note that the Hasse invariants of $Q_1^4$ and $Q_2^4$ at $p=3$ are $-1$ and $1$ resp. Thus, by \cref{thm:arith-inv}, the two polytopes are incommensurable. Observing the diagrams, they satisfy \cref{ass:we-can-now}.

    \subsection{Dimension five}
    The following Coxeter polytopes serve as the gluing blocks for dimension five, satisfying \cref{ass:we-can-now}. They are polytopes $P_{28, 1}$, $P_{28,28}$ and $P_{28, 43}$ from our new list in \cref{sec:appendix}. The crimson-colored nodes are $E_i^{5,\pm}$. The polytopes are both arithmetic. 
    \begin{figure}[H]
        \centering
        \includegraphics[
          width=0.9\textwidth
        ]{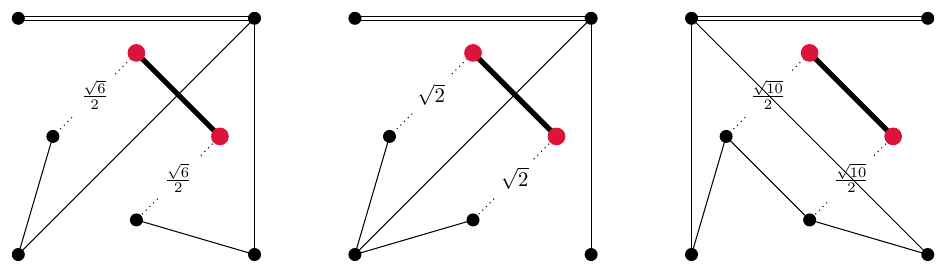}
        \caption{Five-dimensional polytopes $P_1^5$, $P_2^5$ and $P_3^5$ pairwise satisfying \cref{ass:we-can-now}.}
        \label{figure:bdr5-exp}
    \end{figure}
    The polytope $P_1^5$ has one cusp
    \[F_1\cap F_2\cap F_5\cap F_6\cap F_7\cap F_8\]
    and $P_2^5$ has two cusps
    \[F_3\cap F_4\cap F_5\cap F_6\cap F_7, \quad F_1\cap F_2\cap F_4\cap F_6\cap F_7\cap F_8,\]
    while $P_3^5$ has two cusps
    \[F_3\cap F_4\cap F_5\cap F_6\cap F_8, \quad F_1\cap F_2\cap F_5\cap F_6\cap F_7\cap F_8.\]
    They all have finite volumes. They have the following Vinberg forms, listed in the same order wrt the diagrams
    \[Q_1^5=\langle 1,-6,24,3,12,3\rangle,\quad Q_2^5=\langle 1,-8,32,8,6,16/3\rangle,\quad  Q_3^5=\langle 1,-10,40,5,20,5\rangle\]
    The Vinberg forms are pairwise not similar since the determinant ratios are all non-square. Thus, $P_1^5, P_2^5, P_3^5$ form three pairs of polytopes satisfying \cref{ass:we-can-now}.
    \begin{remark}
        In our lists, one can find many more pairs of candidates for noncompact Makarov's garlands. But it suffices to produce one pair of them for each dimension. Here we only list the most convenient ones.
    \end{remark}

    \subsection{Dimension six}
    The following two Coxeter polytopes from \cite{roberts2016} serve as the gluing blocks for dimension six, satisfying \cref{ass:we-can-now}. They are polytopes $P_1^6$ and $P_2^6$. The crimson-colored nodes are $E_i^{6,\pm}$. Note that $P_1^6$ and $P_2^6$ are both properly quasi-arithmetic.
    \begin{figure}[H]
        \centering
        \includegraphics[
          width=0.7\textwidth
        ]{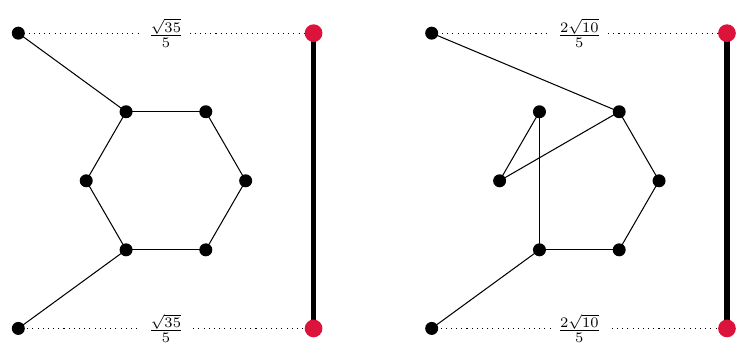}
        \caption{Two $6$-polytopes $P_1^6$ and $P_2^6$ satisfying \cref{ass:we-can-now}.}
        \label{figure:bdr6-exp}
    \end{figure}
    The polytope $P_1^6$ has one cusp 
    \[F_1\cap F_2\cap F_5\cap F_6\cap F_7\cap F_8\cap F_9\]
    and the polytope $P_2^6$ has two cusps
    \[F_3\cap F_4\cap F_5\cap F_6\cap F_8\cap F_9,\quad F_1\cap F_2\cap F_5\cap F_6\cap F_7\cap F_8\cap F_9\]
    They both have finite volumes. Their Vinberg forms are
    \[Q_1^6=\left\langle1,35,-\frac75,21,35,\frac{35}{3},\frac{35}{4}\right\rangle, \quad Q_2^6=\left\langle1,10,-\frac85,4,\frac{15}{4},\frac{10}{3},\frac52\right\rangle\]
    Despite the determinant ratio being a square, their Hasse invariants at $p=2$ are different. The ambient groups of $P_i^6$ are precisely $PO(Q_i^6)$. Thus, the polytopes have distinct ambient groups by \cref{lem:qsiarith-inv}. They satisfy \cref{ass:we-can-now}.

    \subsection{Dimension seven}
    The following two Coxeter polytopes from \cite{roberts2016} serve as the gluing blocks for dimension seven, satisfying \cref{ass:we-can-now}. They are polytopes $P_1^7$ and $P_2^7$. The crimson-colored nodes are $E_i^{7,\pm}$. Note that $P_1^7$ is properly quasi-arithmetic, while $P_2^7$ is arithmetic.
    \begin{figure}[H]
        \centering
        \includegraphics[
          width=0.7\textwidth
        ]{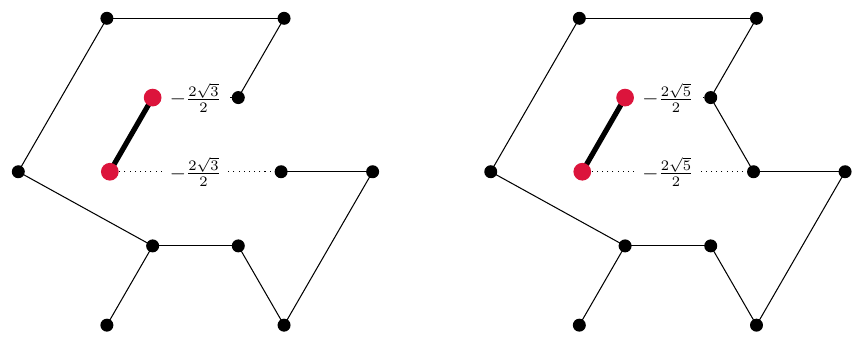}
        \caption{Two $7$-polytopes $P_1^7$ and $P_2^7$ satisfying \cref{ass:we-can-now}.}
        \label{figure:bdr7-exp}
    \end{figure}
    The polytope $P_1^7$ has two cusps 
    \[F_2\cap F_3\cap F_5\cap F_6\cap F_7\cap F_8\cap F_9\cap F_{10},\quad F_1\cap F_4\cap F_5\cap F_6\cap F_8\cap F_\cap F_{10}\]
    and the polytope $P_2^7$ has one cusp
    \[F_2\cap F_3\cap F_5\cap F_6\cap F_7\cap F_8\cap F_9\cap F_{10}\]
    They both have finite volumes. The Vinberg forms of these polytopes are
    \[Q_1^7=\left\langle 1,-1,12,\frac23,\frac58,\frac35,\frac7{12},\frac47\right\rangle, \quad Q_2^7=\left\langle 1,-\frac54,25,\frac34,\frac23,\frac58,\frac35,\frac14\right\rangle\]
    with distinct determinant square classes
    \[\det(Q_1^7)\equiv -1,\quad \det(Q_2^7)\equiv -15\pmod{(\mathbb{Q}^{\times})^2}.\]
    The ambient groups of $P_i^7$ are precisely $PO(Q_i^7)$. Thus, the polytopes have distinct ambient groups by \cref{lem:qsiarith-inv}. They satisfy \cref{ass:we-can-now}.

    \subsection{Dimension nine}
    The following two Coxeter polytopes from \cite{roberts2016} serve as the gluing blocks for dimension nine, satisfying \cref{ass:we-can-now}. They are polytopes $P_1^9$ and $P_2^9$. The crimson-colored nodes are $E_i^{9,\pm}$. Both polytopes are arithmetic.
    \begin{figure}[H]
        \centering
        \includegraphics[
          width=0.7\textwidth
        ]{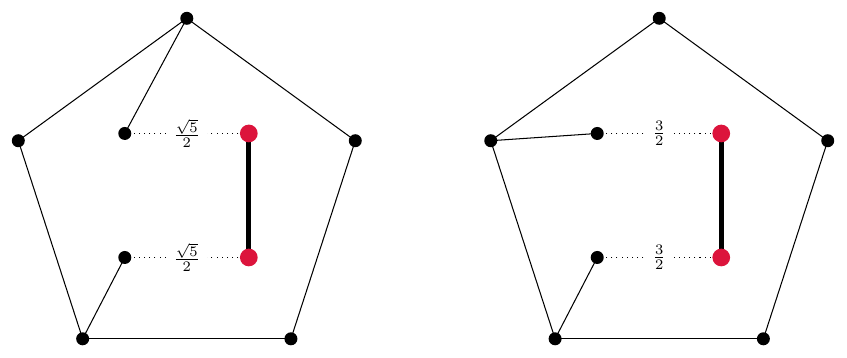}
        \caption{Two nine-dimensional polytopes $P_1^9$ and $P_2^9$ satisfying \cref{ass:we-can-now}.}
        \label{figure:bdr9-exp}
    \end{figure}
    The polytope $P_1^9$ has one cusp
    \[F_2\cap F_3\cap F_5\cap F_6\cap F_7\cap F_8\cap F_9\cap F_{10}\cap F_{11}\cap F_{12}\]
    and the polytope $P_2^9$ has four cusps
    \[\begin{aligned}
        v_1&=F_1\cap F_4\cap F_5\cap F_6\cap F_7\cap F_8\cap F_9\cap F_{10}\cap F_{11},\\
        v_2&=F_1\cap F_4\cap F_5\cap F_6\cap F_7\cap F_8\cap F_{10}\cap F_{11}\cap F_{12},\\
        v_3&=F_1\cap F_4\cap F_6\cap F_7\cap F_8\cap F_9\cap F_{10}\cap F_{11}\cap F_{12},\\
        v_4&=F_2\cap F_3\cap F_5\cap F_6\cap F_7\cap F_8\cap F_9\cap F_{10}\cap F_{11}\cap F_{12}.
    \end{aligned}\]
    Their Vinberg forms are
    \[Q_1^9=\left\langle5,-\frac14,5,\frac{15}{4},\frac{10}{3}, \frac{25}{8},3,\frac{35}{12},\frac{20}{7},\frac54\right\rangle,\quad Q_2^9=\langle 5,-1,5,15,30,2,3,105,35,5\rangle\]
    with distinct determinant square classes
    \[\det(Q_1^9)\equiv -5,\quad \det(Q_2^9)\equiv -1\pmod{(\mathbb{Q}^{\times})^2}.\]
    The ambient groups of $P_i^9$ are precisely $PO(Q_i^9)$. Thus, the polytopes have distinct ambient groups by \cref{lem:qsiarith-inv}. Hence, we have a pair satisfying \cref{ass:we-can-now}.
    
    \newpage
    \section*{Appendix: Coxeter diagrams of five-dimensional hyperbolic Coxeter polytopes with eight facets}\label{sec:appendix}
    We display all Coxeter diagrams of finite-volume five-dimensional polytopes with eight facets. A dotted edge indicates an ultraparallel facet relation, while a bold edge indicates an asymptotic facet relation.

    \begin{figure}[htbp]
        \centering
        \includegraphics[
          width=1\textwidth,
          clip
        ]{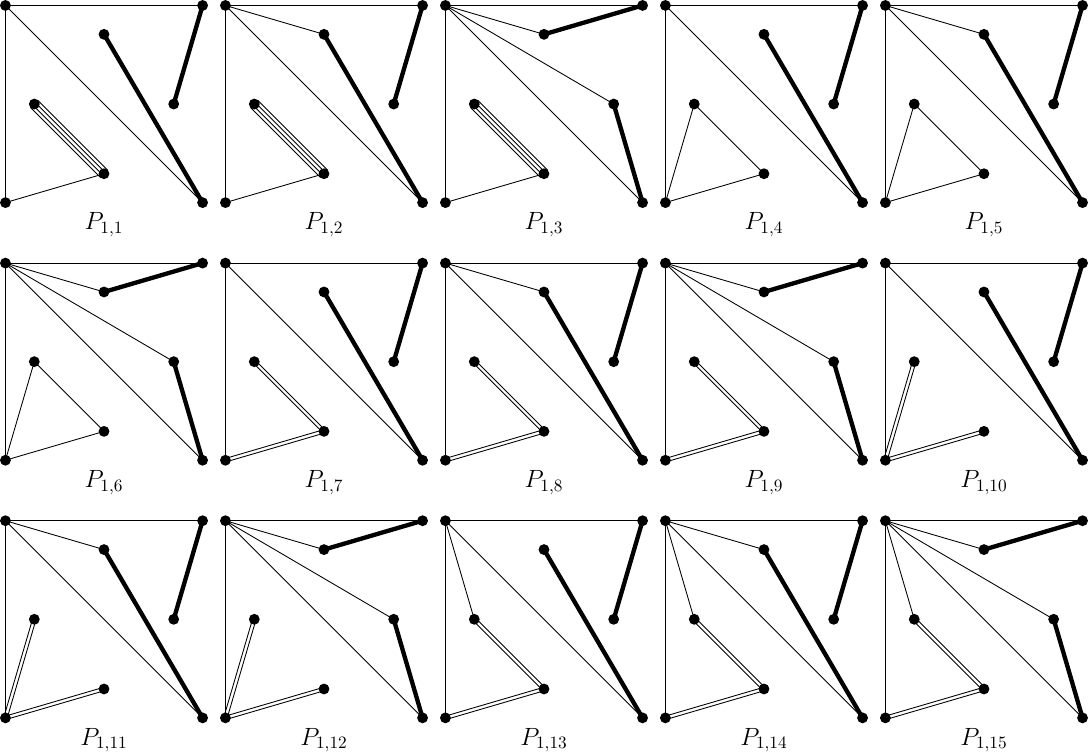}
        \caption{Finite-volume Coxeter $5$-polytopes with $8$ facets over polytopes $P_1$.}
        \label{figure:p50}
    \end{figure}
    
    \begin{figure}[htbp]
        \centering
        \includegraphics[
          width=0.9\textwidth,
          clip
        ]{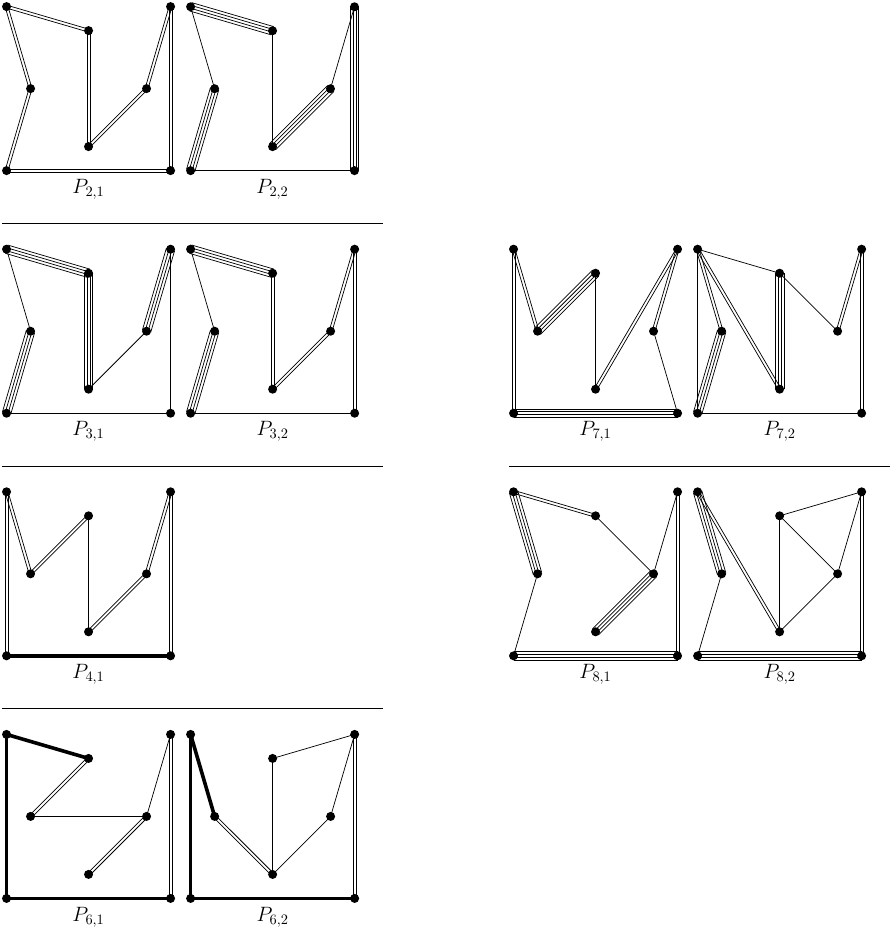}
        \caption{Finite-volume Coxeter $5$-polytopes with $8$ facets over polytopes $P_i$ for $i=2, 3, 4$, $6, 7, 8$, respectively.}
        \label{figure:p51}
    \end{figure}

    \begin{figure}[htbp]
        \centering
        \includegraphics[
          width=1\textwidth,
          clip
        ]{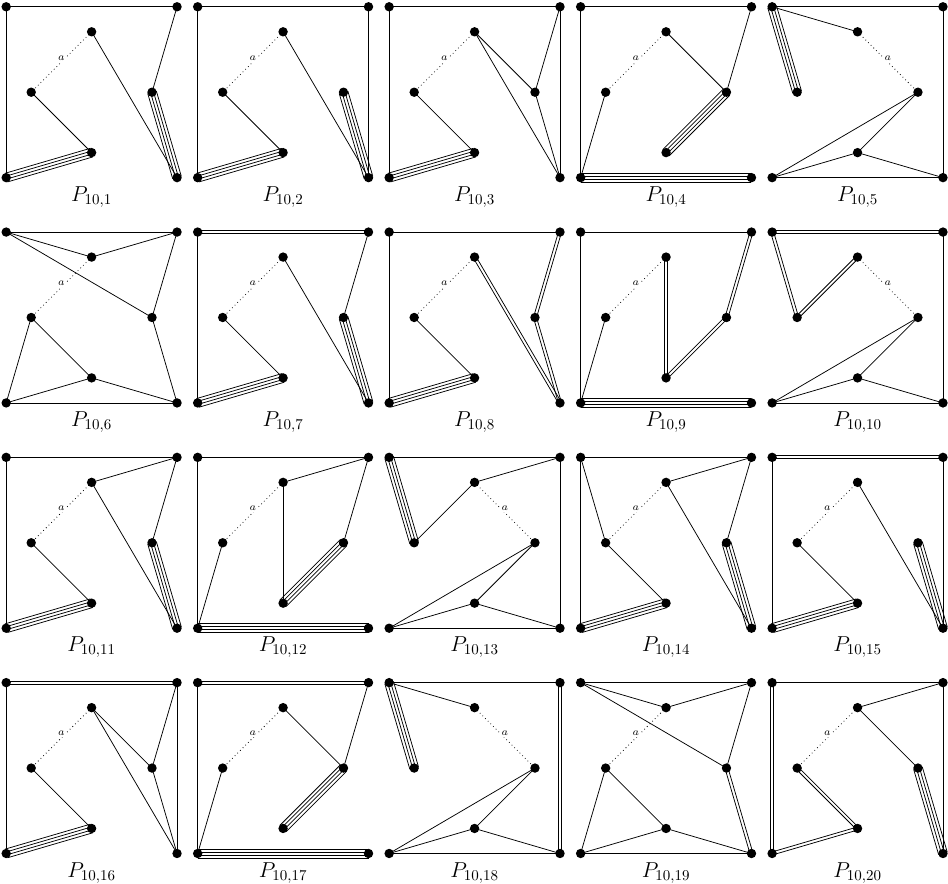}
        \caption{Finite-volume Coxeter $5$-polytopes with $8$ facets over polytopes $P_10$.} \label{figure:p52}
    \end{figure}
    
    \begin{figure}[htbp]
        \includegraphics[
          width=1\textwidth,
          clip
        ]{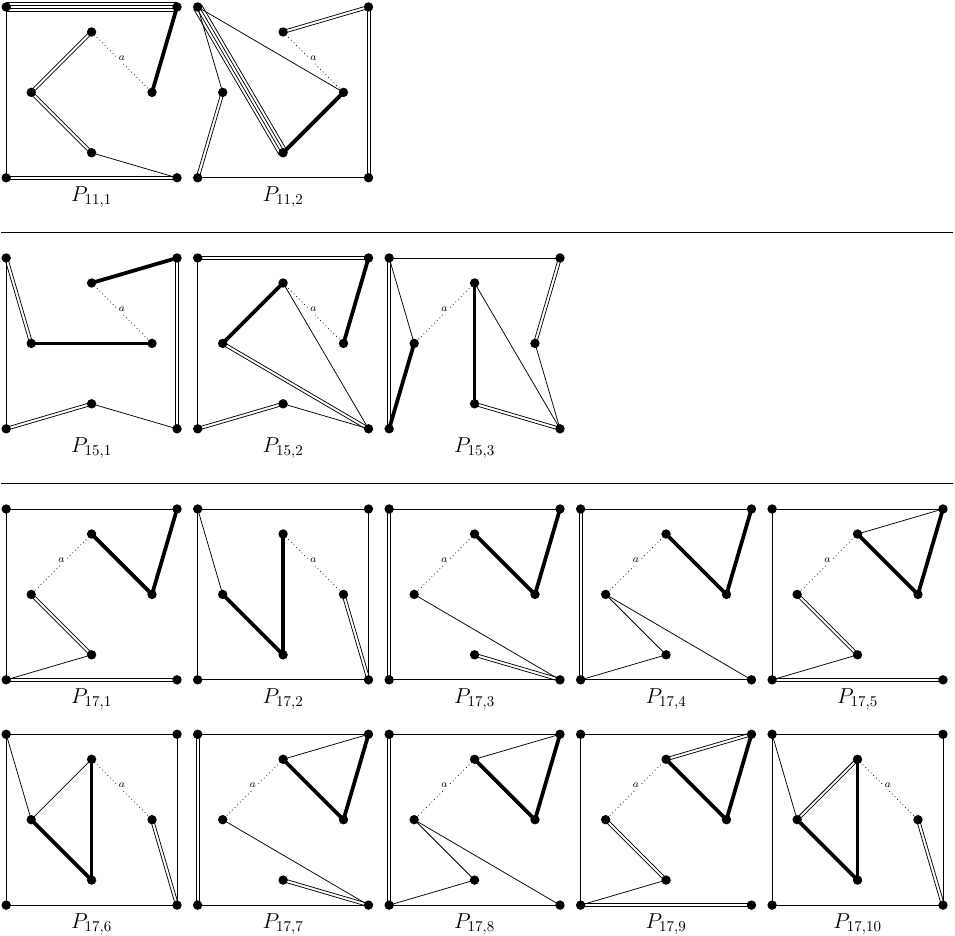}
        \caption{Finite-volume Coxeter $5$-polytopes with $8$ facets over polytopes $P_i$ for $i=11, 15, 17$, respectively.}
        \label{figure:p53}
    \end{figure}

    \begin{figure}[htbp]
        \includegraphics[
          width=1\textwidth,
          clip
        ]{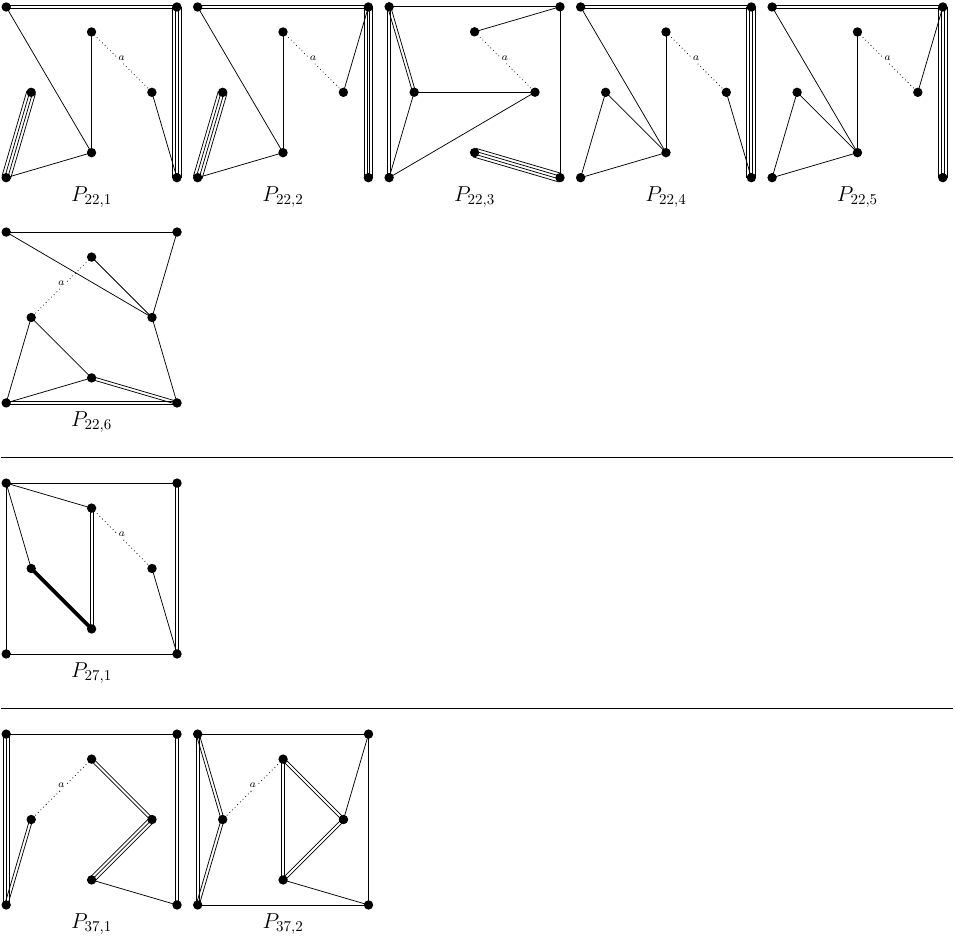}
        \caption{Finite-volume Coxeter $5$-polytopes with $8$ facets over polytopes $P_i$ for $i=22, 27, 37$, respectively.} \label{figure:p54}
    \end{figure}
  
    \begin{figure}[htbp]
        \includegraphics[
          width=0.93\textwidth,
          clip
        ]{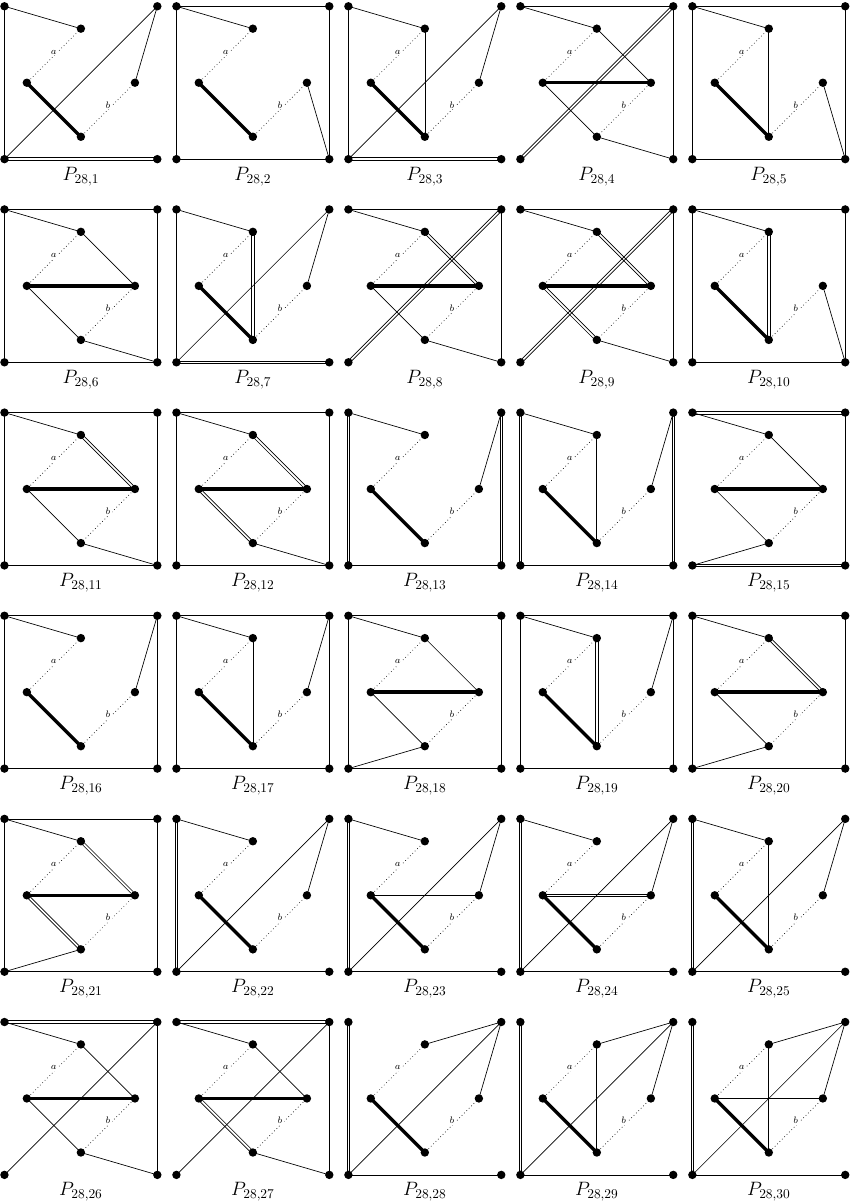}
        \caption{Finite-volume Coxeter $5$-polytopes with $8$ facets over polytopes $P_{28}$, part one.} \label{figure:p55}
    \end{figure}
    
    \begin{figure}[htbp]
        \includegraphics[
          width=1\textwidth,
          clip
        ]{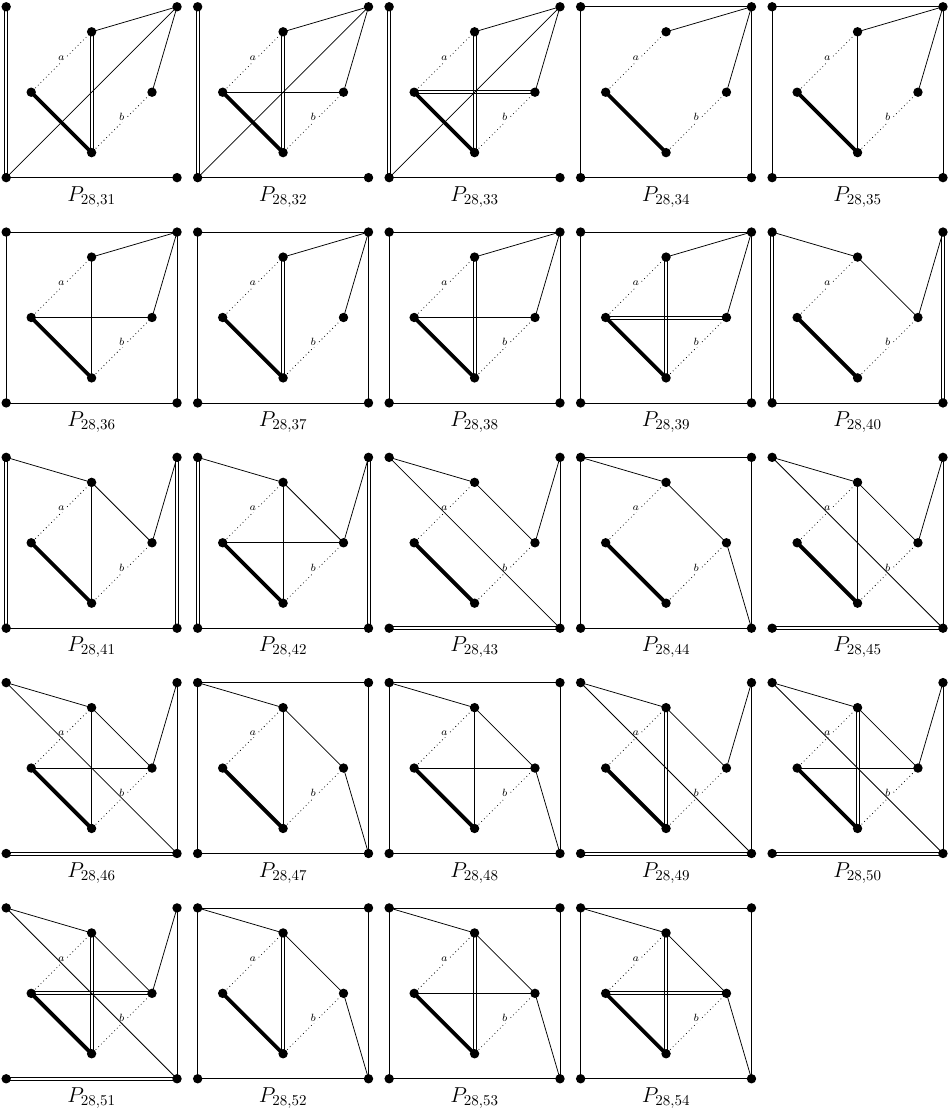}
        \caption{Finite-volume Coxeter $5$-polytopes with $8$ facets over polytopes $P_{28}$, part two.} \label{figure:p56}
    \end{figure}

    \begin{figure}[htbp]
        \includegraphics[
          width=1\textwidth,
          clip
        ]{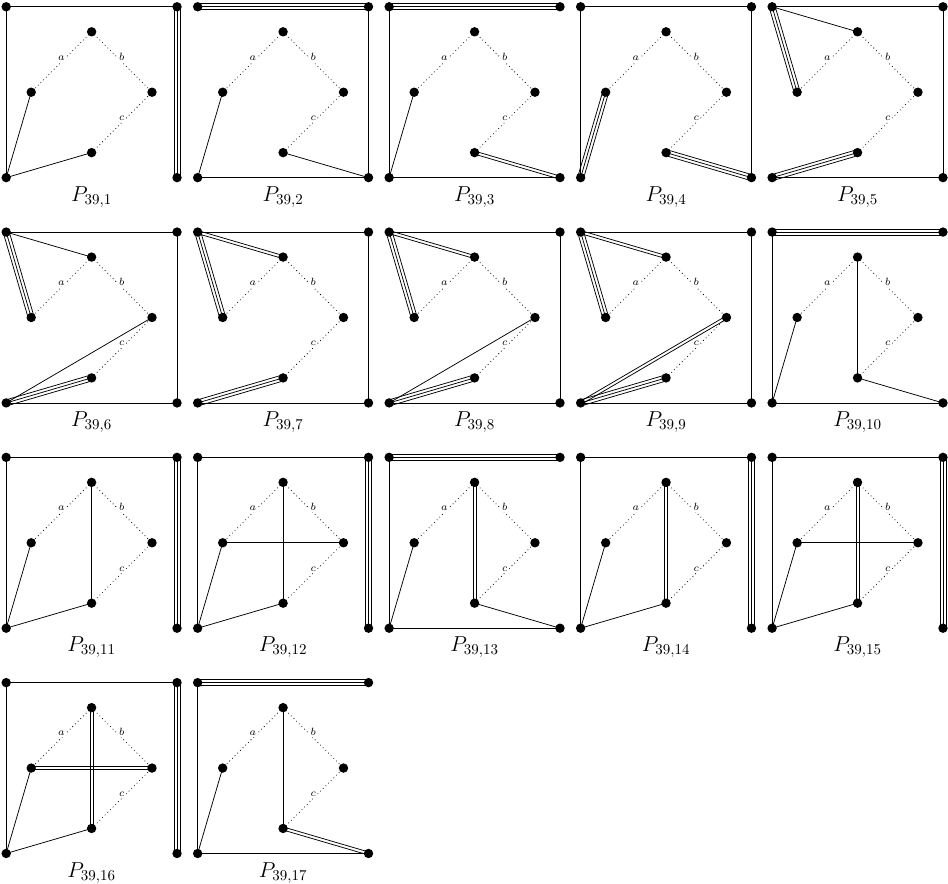}
        \caption{Finite-volume Coxeter $5$-polytopes with $8$ facets over polytopes $P_{39}$.} \label{figure:p57}
    \end{figure}

    \clearpage
    \printbibliography
\end{document}